\theoremstyle{plain}
\newtheorem{theorem}{Theorem}[section]
\newtheorem{proposition}[theorem]{Proposition}
\theoremstyle{definition}
\newtheorem{definition}[theorem]{Definition}
\theoremstyle{remark}
\newtheorem{remark}[theorem]{Remark}
\theoremstyle{hypothesis}
\newtheorem{hypothesis}[theorem]{Hypothesis}
\numberwithin{equation}{section}
\newcommand{\ba}{\underline{a}} 
\newcommand{\br}{\underline{\rho}}
\newcommand{\bu}{\underline{u}}
\newcommand{\const}{\mathrm{const.}}
\newcommand{\circu}{\mathrm{circ}}
\newcommand{\abs}[1]{\lvert#1\rvert}
\newcommand{\D}{\partial}
\newcommand{\dive}{\mathrm{div}}
\newcommand{\diag}{\mathrm{diag}}
\newcommand{\Dlt}{\Delta t}
\newcommand{\Dt}{\partial_t}
\newcommand{\dvg}{\nabla \cdot }
\newcommand{\grd}{\nabla}
\newcommand{\ld}{\lambda}
\newcommand{\mbb}{\mathbb}
\newcommand{\mds}{\mathds}
\newcommand{\mcal}{\mathcal}
\newcommand{\norm}[1]{\lVert#1\rVert}
\newcommand{\solspc}{L^2(\mbb{T}^d)^{1+d}}
\newcommand{\veps}{\varepsilon}
\newcommand{\vrho}{\varrho}
\newcommand{\taud}{\mathbb{T}^d}
\newcommand{\Hdiv}{H(\dive;\mbb{T}^d)}
\newcommand{\Sint}{L^2(\mbb{T}^d)}
\begin{document}

\title[An AP Low Mach Number Accurate Scheme for the Wave Equation
system]{Analysis of an Asymptotic Preserving Low Mach Number Accurate
  IMEX-RK Scheme for the Wave Equation System}  

\author[Arun]{K. R. Arun}
\address{School of Mathematics, Indian Institute of Science Education and Research Thiruvananthapuram, Thiruvananthapuram - 695551, India}
\email{arun@iisertvm.ac.in}

\author[Das Gupta]{A. J. Das Gupta}
\address{Department of Mathematics, Ramakrishna Mission Vidyamandira,
  Howrah - 711202, India}
\email{arnab.math@vidyamandira.ac.in}

\author[Samantaray]{S. Samantaray}
\address{School of Mathematics, Indian Institute of Science Education and Research Thiruvananthapuram, Thiruvananthapuram - 695551, India}
\email{sauravsam13@iisertvm.ac.in}

\date{\today}

\subjclass[2010]{Primary 35L45, 35L60, 35L65, 35L67, 35L05; Secondary 65M06,
  65M08}

\keywords{Compressible Euler system, Incompressible Euler system, The
  wave equation system, Zero Mach number limit, IMEX-RK schemes,
  Asymptotic preserving, Asymptotic accuracy, Finite volume method}

\begin{abstract}
  In this paper the analysis of an asymptotic preserving (AP) IMEX-RK
  finite volume scheme for the wave equation system in the zero Mach
  number limit is presented. The accuracy of a numerical scheme at low
  Mach numbers is its ability to maintain the solution close to the
  incompressible solution for all times, and this can be formulated in
  terms of the invariance of a space of constant densities and
  divergence-free velocities. An IMEX-RK methodology is employed to
  obtain a time semi-discrete scheme, and a space-time fully-discrete
  scheme is derived by using standard finite volume techniques. The
  existence of a unique numerical solution, its uniform stability with
  respect to the Mach number, the AP property, and the accuracy at low
  Mach numbers are established for both time semi-discrete, and
  space-time fully-discrete schemes. Extensive numerical case studies
  confirm uniform second order convergence of the scheme with respect
  to the Mach number, and all the above-mentioned properties.
\end{abstract}

\maketitle

\section{Introduction}
\label{sec:introduction}

Singular perturbation problems containing small parameters arise in
the mathematical modelling of several problems in science and
engineering. A typical example of a singularly perturbed problem is
the wellknown low Mach number flow in fluid dynamics, often
encountered in magnetohydrodynamics, atmospheric, and geophysical
flows, weather modelling, combustion theory, and so on. The Euler
equations of motion provide a simple yet optimal mathematical
tool to model and simulate many of the aforementioned physical
processes. The scaled, isentropic Euler equations read 
\begin{equation}
  \label{eq:isentropic_euler_scaled}
  \begin{aligned}
    \Dt \rho + \dvg (\rho u) &= 0, \\ 
    \Dt (\rho u) + \dvg (\rho u \otimes u ) + \frac{\nabla p}{\veps^2}
    &= 0, 
\end{aligned}  
\end{equation} 
where the independent variables are time $t > 0$, and space $x \in 
\mbb{R}^d$, $d = 1,2,3$, and the dependent variables are $\rho =
\rho(t,x)>0$, the density, and $u = u(t,x) \in \mbb{R}^d$, the velocity
of the fluid.The pressure $p$ is given by the equation of state
$p=P(\rho):=\rho^\gamma$, where $\gamma$ a constant. Here, $\veps$ is the
ratio of a reference fluid velocity to a reference sound velocity, and is
known as the reference Mach number. In low Mach number flows,
i.e.\ when $\veps \sim 0$, $\veps$ plays the role of a singular
perturbation parameter, and it is wellknown that in the limit
$\veps\to0$, the solutions of \eqref{eq:isentropic_euler_scaled}
approximate their incompressible counterparts; see
\cite{klainerman-majda} for more details. On the other hand, numerical
schemes designed  for the system \eqref{eq:isentropic_euler_scaled}
suffer from a lot of predicaments in the limit $\veps\to0$; see, e.g.\
\cite{klein}, and the references therein. From a numerical analysis
point of view, the main challenges faced by numerical schemes in a low
Mach number regime are the stiffness arising due to stringent CFL
restrictions, creation of spurious waves, dependence of the numerical
viscosity of a scheme on the Mach number leading to lack of
stability, and the inability to respect the transitional behaviour of
the system of equations in the singular limit.

In our previous work \cite{ArunSamArxiv}, a second order accurate,
semi-implicit finite volume approximation for the Euler system
\eqref{eq:isentropic_euler_scaled} in a low Mach number regime was
proposed and implemented. It is shown that the above mentioned scheme 
overcomes the severe CFL restrictions, avoids the generation of
spurious waves, and is consistent in the singular limit. This paper is
aimed to present a rigorous analysis of the semi-implicit scheme of
\cite{ArunSamArxiv} by considering the linear wave equation system as
a simplified model. The  scaled, purely hyperbolic, linear wave
equation system with advection is given by  
\begin{equation}
  \label{eq:LWESWA}
  \begin{aligned}
    \Dt \vrho + (\bu \cdot \nabla) \vrho + \frac{\ba}{\veps} \dvg
    u &= 0, \\
    \Dt u + (\bu \cdot \nabla) u + \frac{\ba}{\veps} \nabla \vrho &
    = 0. 
  \end{aligned}
\end{equation}
Here, $\vrho$ denotes a scaled density with $\rho(t,x) = \br \left(
  1 + \frac{\veps}{\ba}\vrho(t,x) \right)$, and the constants $\br,
\bu$, and $\ba$ are, respectively, the linearisation states of $\rho,
u$ and the sound velocity $a$.

Expanding all the dependent variables in \eqref{eq:LWESWA} using the
ansatz 
\begin{equation}
  \label{eq:f_ansatz}
  f(t,x)=f_{(0)}(t,x)+\veps f_{(1)}(t,x)+\veps^2 f_{(2)}(t,x) + \ldots ,
\end{equation}
and performing a scale analysis with the use of appropriate boundary
conditions yields the following linearised, mixed hyperbolic-elliptic,
incompressible system:
\begin{equation}\label{eq:LWESWA_LM}
  \begin{aligned}
    \vrho_{(0)} &= \const, \\
    \Dt u_{(0)} + (\bu \cdot \nabla) u_{(0)} + \ba \nabla \vrho_{(1)} &=
    0, \\
    \dvg u_{(0)} &= 0
\end{aligned}
\end{equation}
for the unknowns $u_{(0)}$ and $\vrho_{(1)}$. 

It is essential for a discretisation of \eqref{eq:LWESWA} to respect
the limiting system \eqref{eq:LWESWA_LM} in the asymptotic limit
$\veps\to0$. In addition, it is desirable that its stability
restrictions are independent of $\veps$ so that the stiffness arising
when $\veps\to0$ can be overcome. These two requirements fall under
the purview of the so-called asymptotic preserving (AP) methodology
\cite{Degond13,jin-ap, Jin12}. In a related work
\cite{degond_negulescu_etal}, the authors study a strongly anisotropic
singular elliptic problem arising in plasma physics. In the singular limit,
i.e.\ when the anisotropy parameter goes to zero, the problem converges
to an ill-posed one. As a cure, the authors propose an AP
reformulation of the given anisotropic equation in order to write the
original problem in such a way that a continuous transition towards
the limit can be achieved. Hence, the AP methodology can also help to
get rid of the possible ill-posedness which might arise in the
singular limit. 

It is well known that being AP is predominantly dictated by the
particular time discretisation chosen. The scheme designed in
\cite{ArunSamArxiv} makes use of the implicit explicit Runge-Kutta
(IMEX-RK) time discretisation and an appropriate flux decomposition to 
achieve the AP property for the Euler system
\eqref{eq:isentropic_euler_scaled} in the zero Mach number limit; see
\cite{AscherRuuthSpiteri, BispArunLucaNoe, cordier, degond_tang,
  NoelleArunMunz,  pareschi-russo-conf, pareschi-russo,
  zakerzadeh-noelle} for more detailed discussions on IMEX-RK schemes,
AP schemes, and their other applications. As mentioned in the
beginning, compressible flow solvers suffer from severe loss of
accuracy at low Mach numbers due to the creation of spurious
waves. Recently, a detailed analysis was carried out by Dellacherie in
\cite{dellacherie} on the behaviour of explicit Godunov-type schemes
in a low Mach number regime. The above mentioned study using the
linear wave equation system \eqref{eq:LWESWA} reveals that the
inaccuracies can be avoided by enforcing the particular scheme to
preserve a space of constant densities and divergence-free velocities,
known as the well-prepared space. Following this, in
\cite{ArunSamArxiv}, a scheme which leaves the well-prepared space
invariant is designated as asymptotically accurate (AA).   

The goal of the current paper is to present a detailed analysis of a
time semi-discrete as well as space-time fully-discrete IMEX-RK scheme
for the wave equation system \eqref{eq:LWESWA} under the low Mach
number scaling. The study focusses on addressing the following three
issues:
\begin{enumerate}
\item\label{item1:solvability}
the existence of a unique numerical solution for a fixed $\veps >0$;
\item\label{item2:AP}
the uniform stability of the numerical solution with respect to
$\veps$, and the AP property;
\item\label{item3:E-inv}
the invariance of the well-prepared space by the numerical solution
implying asymptotic accuracy.
\end{enumerate}

The time semi-discrete scheme corresponds to the dual formulation of
an elliptic equation for the density, and the existence and uniqueness of
its solution is obtained via the saddle point theory of variational
problems. The asymptotic consistency, as done, e.g.\ in
\cite{ArunSamArxiv}, then reveals that we do not encounter
pathologies, such as ill-posedness of the limit problem, cf.\
\cite{degond_negulescu_etal}. The fully-discrete scheme obtained by
simple central differencing involves circulant matrices
\cite{gray}. We exploit the theory of these matrices to establish the
above properties for the fully-discrete setup.  

In order to carry out the analysis, the linear wave equation system
\eqref{eq:LWESWA} is rewritten in the evolution form      
\begin{equation}
\Dt U + H(U) + \frac{1}{\veps} L(U) = 0, \label{eq:LWESWA_ev} 
\end{equation}
 via the operators $H$ and $L$ defined as
\begin{equation}
  \label{eq:Notn1}
  U := \begin{pmatrix}
    \vrho\\
    u
  \end{pmatrix}, \
  H(U) := \begin{pmatrix}
    \bu \cdot \grd \vrho \\
    (\bu \cdot \grd) u 
  \end{pmatrix}, \ \mbox{and} \
  L(U) := \begin{pmatrix}
    \ba \dvg u \\
    \ba \grd \vrho
  \end{pmatrix}.	
\end{equation} 
Here, $H$ is the convective operator with a timescale of order $1$
and $L/\veps$ is the acoustic operator with a timescale of the order
of $\veps$. As discussed above, when $\veps\to0$, the solutions
$U=(\vrho,u)$ of \eqref{eq:Notn1} converge to
$U_{(0)}=(\vrho_{(0)},u_{(0)})$ in the well-prepared space, which
happens to be the kernel of the operator $L$. Hence, as done in
\cite{dellacherie}, throughout this paper, we restrict our analysis to
the following initial value problem:   
\begin{equation}
  \label{eq:ivpALWES}
  \begin{aligned}
    \Dt U + \frac{1}{\veps} L(U) &= 0, \\
    U(0,x)&=U_0(x), \ x\in\mbb{T}^d,
  \end{aligned}
\end{equation}
where $\mbb{T}^d$ denotes the $d$-dimensional torus to take into
account of the periodic boundary conditions. However, the numerical case
studies are performed also on the model \eqref{eq:LWESWA_ev} with
advection. 

The rest of this paper is organised in the following way. In
Section~\ref{sec:wave-equation} we briefly recall the results from
\cite{dellacherie}, which are relevant for the present
study. Section~\ref{sec:AP_AA_IMEX-RK} is devoted to a short
presentation of IMEX-RK time discretisation for stiff systems of ODEs, 
and the notions of AP and AA properties. The analysis of the time
semi-discrete scheme obtained after employing the IMEX-RK method is
taken up in Section~\ref{sec:TimeSemiDiscrete}, where we prove the
desired properties mentioned above in 
(\ref{item1:solvability})-(\ref{item3:E-inv}). In
Section~\ref{sec:SpaceTimeFullyDiscrete}  we present a space-time
fully-discrete scheme derived by using a finite volume technique. The
theory of circulant matrices is used to establish the same 
properties (\ref{item1:solvability})-(\ref{item3:E-inv}) for the
fully-discrete scheme. The results of numerical case studies are
reported in Section~\ref{sec:Numerics}, where we numerically
corroborate the theoretical claims. Finally, the paper is
concluded with some remarks in Section~\ref{sec:Conclusion}.
 
\section{Analysis of the Wave Equation System}  
\label{sec:wave-equation}

In this section, we briefly recall some of the results from
\cite{dellacherie}, regarding the low Mach number limit of the wave
equation system \eqref{eq:LWESWA_ev}. First, we consider the space of
solutions of \eqref{eq:LWESWA_ev} which is the Hilbert space of square
integrable functions $\solspc$. The space $\solspc$ is equipped with
the innerproduct 
\begin{equation}\label{eq:U1U2ip}
 (U_1,U_2):= (\vrho_{1},\vrho_{2})+(u_{1}, u_{2}),
\end{equation}
where $U_m=(\vrho_m,u_m),m=1,2$, in the above, and throughout the rest 
of this paper, $(\cdot,\cdot)$ denotes the $L^2$ innerproduct. The
kernel of the wave operator $L$ is given by   
\begin{equation}\label{eq:LWES_E_def}
  {\mcal{E}}:= \left\{ U \in \solspc \colon \grd \vrho = 0, \
               \text{and}  \ \dvg u = 0 \right\}, 
\end{equation}
which is the so-called well-prepared, incompressible, space of constant
densities and divergence-free velocities. The orthogonal complement
$\tilde{\mcal{E}}$ of $\mcal{E}$ is defined as
\begin{equation}\label{eq:LWES_E_comp_def}
\tilde{\mcal{E}} := \left\{ U \in \solspc \colon \int_{\mbb{T}^d} \rho dx
                     = 0,  \ \text{and}  \ \nabla \times u = 0\right\}.
\end{equation}
The spaces $\mcal{E}$ and $\tilde{\mcal{E}}$, given by
\eqref{eq:LWES_E_def} and \eqref{eq:LWES_E_comp_def}, yields the
following Helmholtz-Hodge-Leray decomposition of $\solspc$:
\begin{equation}
  \label{eq:LWES_HelmLer}
  \mcal{E} \oplus \tilde{\mcal{E}} = \solspc,   
  \ \mbox{and } \  \mcal{E} \perp \tilde{\mcal{E}}.
\end{equation}
As a consequence of \eqref{eq:LWES_HelmLer}, we can decompose any $U
\in \solspc$; there exists a unique $\hat{U} \in \mcal{E}$ and
$\tilde{U}\in \tilde{\mcal{E}}$, such that $U = \hat{U} +
\tilde{U}$. We define the Helmholtz-Hodge-Leray projection
$\mbb{P}\colon \solspc\to \mcal{E}$, via    
\begin{equation}
  \label{eq:LWES_Pdefn}
  \mbb{P}U := \hat{U}.
\end{equation}
\begin{definition}\label{LWES_energy_defintion}
  The energy $E$ of the system \eqref{eq:LWESWA}  is defined as 
  \begin{equation}\label{eq:LWES_energy_def}
    E :=\frac{1}{2} (U, U).
  \end{equation}
\end{definition} 
\begin{proposition}\label{energyconserv}
Let U be a solution of the system \eqref{eq:LWESWA} on
$\mbb{T}^d$. Then, the energy $E$ introduced in
Definition~\ref{LWES_energy_defintion} is preserved, i.e.\
\begin{equation}\label{eq:LWES_ener_cons}
E(t) = E(0),  \ \text{for all} \  t >0.
\end{equation}
\end{proposition}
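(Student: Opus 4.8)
The plan is to show that $\frac{\dd}{\dd t}E(t)=0$ by differentiating the energy under the integral sign and using the structure of the system.

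The plan is to differentiate the energy in time and exploit the skew-symmetry of both the convective operator $H$ and the acoustic operator $L$ on the periodic domain. Since $E(t)=\frac{1}{2}(U,U)$ and the inner product is bilinear, differentiating under the integral sign and substituting the evolution form \eqref{eq:LWESWA_ev} gives $\frac{\dd}{\dd t}E(t)=(\Dt U,U)=-(H(U),U)-\frac{1}{\veps}(L(U),U)$. The task then reduces to showing that each of these two pairings vanishes.

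First I would treat the acoustic contribution $(L(U),U)$. Writing it out using \eqref{eq:Notn1} gives $\ba\big[(\dvg u,\vrho)+(\grd\vrho,u)\big]$. Integrating the first term by parts on $\taud$, the periodicity eliminates the boundary contribution and yields $(\dvg u,\vrho)=-(u,\grd\vrho)$, which exactly cancels the second term. Thus $(L(U),U)=0$; that is, $L$ is skew-symmetric on $\solspc$.

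Next I would handle the convective contribution $(H(U),U)=((\bu\cdot\grd)\vrho,\vrho)+((\bu\cdot\grd)u,u)$. Since $\bu$ is a constant linearisation state, each scalar term has the form $\int_{\taud}\phi\,(\bu\cdot\grd)\phi\,\dd x=\frac{1}{2}\int_{\taud}\bu\cdot\grd(\phi^2)\,\dd x=\frac{1}{2}\int_{\taud}\dvg(\bu\,\phi^2)\,\dd x$, which vanishes by the divergence theorem on the torus. Applying this to $\phi=\vrho$ and to each component of $u$ shows $(H(U),U)=0$. Combining the two identities, $\frac{\dd}{\dd t}E(t)=0$, so $E$ is constant in time and $E(t)=E(0)$ for all $t>0$.

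The proof is a standard energy estimate, so I do not anticipate a genuine obstacle; the only delicate step is the justification of the two integrations by parts and the vanishing of the boundary terms, which is precisely where periodicity of $\taud$ is essential, together with the constancy of $\bu$ that makes $H$ skew-symmetric. This manipulation presupposes that $U$ is regular enough for the formal differentiation and integration by parts (e.g.\ a classical or suitably smooth solution), and it is this skew-adjoint structure of the spatial operators $H$ and $L$ that is the true mechanism behind the conservation.
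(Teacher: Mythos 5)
Your argument is correct: the skew-symmetry of both $L$ and $H$ on the torus (the latter relying on the constancy of $\bu$) is exactly the mechanism behind energy conservation, and the integration-by-parts steps are justified by periodicity. The paper itself does not supply a proof of this proposition --- it is recalled from the cited work of Dellacherie --- but the standard energy computation you give is the expected one, so there is nothing to fault beyond the (correctly acknowledged) assumption of sufficient regularity to differentiate under the integral sign.
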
 
\begin{remark}
Proposition \ref{energyconserv} states that the linear wave equation
system \eqref{eq:LWESWA} conserves the energy
\eqref{LWES_energy_defintion}. 
\end{remark}
\begin{proposition}\label{E_Ep_inv}
Let $U$ be the solution of the system \eqref{eq:LWESWA} with initial
data $U_{0}$. Then, 
\begin{itemize}
\item for all $U_{0} \in \mcal{E}$, we have $U(t, \cdot) \in \mcal{E}$
  for all $t >0$;  
\item for all $U_{0} \in \tilde{\mcal{E}}$, we have $U(t, \cdot) \in
  \tilde{\mcal{E}}$ for all $t >0$.    
\end{itemize}
\end{proposition}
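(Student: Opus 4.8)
The plan is to exploit the Helmholtz--Hodge--Leray decomposition \eqref{eq:LWES_HelmLer} together with the algebraic structure of the operators $H$ and $L$. The key fact I would establish first is that each of $H$ and $L$ leaves \emph{both} subspaces invariant, i.e.\ $H(\mcal{E})\subseteq\mcal{E}$, $H(\tilde{\mcal{E}})\subseteq\tilde{\mcal{E}}$, and similarly for $L$. For $L$ this is almost immediate: on $\mcal{E}$ one has $\grd\vrho=0$ and $\dvg u=0$, so $L(U)=0\in\mcal{E}$; on $\tilde{\mcal{E}}$, the first component $\ba\dvg u$ has vanishing spatial mean by the divergence theorem on $\mbb{T}^d$, while the second component $\ba\grd\vrho$ is curl-free as a gradient, so $L(U)\in\tilde{\mcal{E}}$. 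For $H$, the point is that $\bu$ is a \emph{constant} vector, so $\dvg$, $\nabla\times$, and the spatial mean all commute with $\bu\cdot\grd$. Concretely I would check $\dvg\bigl((\bu\cdot\grd)u\bigr)=(\bu\cdot\grd)(\dvg u)$, $\nabla\times\bigl((\bu\cdot\grd)u\bigr)=(\bu\cdot\grd)(\nabla\times u)$, and $\int_{\mbb{T}^d}\bu\cdot\grd\vrho\,\dd x=\int_{\mbb{T}^d}\dvg(\vrho\bu)\,\dd x=0$, which together yield $H(\mcal{E})\subseteq\mcal{E}$ and $H(\tilde{\mcal{E}})\subseteq\tilde{\mcal{E}}$.

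From this subspace invariance it follows that the projection $\mbb{P}$ commutes with both operators. Indeed, writing $U=\hat U+\tilde U$ with $\hat U=\mbb{P}U\in\mcal{E}$ and $\tilde U\in\tilde{\mcal{E}}$, invariance gives $\mbb{P}H(U)=H(\hat U)=H(\mbb{P}U)$ and likewise $\mbb{P}L=L\mbb{P}$. Since $\mbb{P}$ is time-independent it also commutes with $\Dt$, so applying $\mbb{P}$ and its complement $\mbb{I}-\mbb{P}$ separately to \eqref{eq:LWESWA_ev} decouples the evolution into two independent problems of the same form,
\begin{equation*}
  \Dt\hat U + H(\hat U) + \tfrac{1}{\veps}L(\hat U)=0, \qquad
  \Dt\tilde U + H(\tilde U) + \tfrac{1}{\veps}L(\tilde U)=0,
\end{equation*}
posed respectively in $\mcal{E}$ and $\tilde{\mcal{E}}$.

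The conclusion is then a uniqueness/energy argument. If $U_0\in\mcal{E}$, then $\tilde U(0)=(\mbb{I}-\mbb{P})U_0=0$; since $\tilde U$ solves an equation of the form \eqref{eq:LWESWA} with zero initial data, Proposition~\ref{energyconserv} applied to $\tilde U$ gives $\tfrac12\norm{\tilde U(t)}^2=E(t)=E(0)=0$, hence $\tilde U(t)=0$ and $U(t)=\hat U(t)\in\mcal{E}$ for all $t>0$. The case $U_0\in\tilde{\mcal{E}}$ is verbatim the same with the roles of $\hat U$ and $\tilde U$ interchanged, using that $\hat U$ too obeys an energy-conserving equation of the same form.

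The main obstacle I anticipate is entirely concentrated in the first step: one must verify cleanly that the first-order operator $\bu\cdot\grd$ commutes with $\dvg$ and $\nabla\times$ and that all boundary terms vanish on $\mbb{T}^d$. These are elementary vector-calculus identities, valid precisely because $\bu$ is constant and the torus has no boundary, but they have to be checked for each of the four inclusions before the commutation $\mbb{P}H=H\mbb{P}$, $\mbb{P}L=L\mbb{P}$ can be asserted; once it is in place, the decoupling and the energy step are routine. A minor point of care is the reading of $\tilde{\mcal{E}}$ in \eqref{eq:LWES_E_comp_def}, whose first condition is $\int_{\mbb{T}^d}\vrho\,\dd x=0$, and the interpretation of $\nabla\times u=0$ in each dimension $d=1,2,3$.
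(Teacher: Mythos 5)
Your proof is correct. Note that the paper itself states Proposition~\ref{E_Ep_inv} without proof, recalling it from \cite{dellacherie}; the route you take --- checking that $H$ and $L$ each map $\mcal{E}$ into $\mcal{E}$ and $\tilde{\mcal{E}}$ into $\tilde{\mcal{E}}$ (using that $\bu$ is constant and $\mbb{T}^d$ has no boundary), deducing that $\mbb{P}$ commutes with the evolution so that the dynamics decouples along $\mcal{E}\oplus\tilde{\mcal{E}}$, and then eliminating the complementary component via the energy conservation of Proposition~\ref{energyconserv} applied to zero initial data --- is exactly the standard argument underlying that citation, so there is nothing to correct or compare beyond the caveat you already flag about interpreting $\nabla\times u$ for $d=1$.
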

\begin{remark}
Proposition \ref{E_Ep_inv} states that the wave equation system
leaves both the spaces $\mcal{E}$ and $\tilde{\mcal{E}}$ invariant. In
other words, if the solution lives in one of these spaces initially,
then it lives there for all times.   
\end{remark}
\begin{theorem}[\cite{dellacherie}] 
  \label{propLin}
  Let $U$ be a solution of the IVP: 
  \begin{align} 
    \Dt U + H(U) + \frac{1}{\veps} L(U) & = 0, \ t>0, \
                                            x\in\mbb{T}^d, \label{eq:PropEqHL}\\
    U(0,x) &= U_0(x),	\ x\in\mbb{T}^d, \label{eq:PropIcHL}
  \end{align}
  and let $\bar{U}$ be a solution of the IVP: 
  \begin{align}
    \Dt \bar{U} + H(\bar{U}) &= 0, \ t>0, \ x\in\mbb{T}^d, \label{eq:PropEqH}\\
    \bar{U}(0,x) &= \hat{U}_0(x), \ x\in\mbb{T}^d, \label{eq:PropIcH} 
  \end{align}
  where $\hat{U}_0:=\mbb{P}U_0$. Let $U=\hat{U}+\tilde{U}$ be the
  Helmholtz-Hodge-Leray decomposition of $U$. Then, the following holds.
  \begin{enumerate}[(i)]
  \item \label{cond1} 
    $\hat{U} = \bar{U}$, 
  \item \label{cond2}
    $\tilde{U}$ is the solution of \eqref{eq:PropEqHL} with
    initial condition $\tilde{U}_0:=(\mbb{I}-\mbb{P})U_0$.
  \end{enumerate}
  Moreover, there holds the energy conservation:
  \begin{equation}
    \label{eq:PropEnCons}
    E_{in}(t) = E_{in}(0)  \ \text{and} \  E_{ac}(t) = E_{ac}(0), \
    \text{for all} \ t>0,
  \end{equation}
  where $E_{in}:=(\hat{U},\hat{U})$ and
  $E_{ac}:=(\tilde{U},\tilde{U})$. As a consequence, the following
  estimate holds: 
  \begin{equation}
    \norm{U_0-\mbb{P}U_0} = \mcal{O}(\veps)
   \implies
   \norm{U(t)-\mbb{P}U(t)} = \mcal{O}(\veps) \ \mbox{for all} \ t>0. 
    \label{eq:estSchochetLin}
  \end{equation}
\end{theorem}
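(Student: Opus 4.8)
The plan is to exploit the linearity of \eqref{eq:PropEqHL} together with the Helmholtz-Hodge-Leray decomposition \eqref{eq:LWES_HelmLer} and the invariance of the spaces $\mcal{E}$ and $\tilde{\mcal{E}}$ recorded in Proposition~\ref{E_Ep_inv}. Writing $U_0 = \hat{U}_0 + \tilde{U}_0$ with $\hat{U}_0 = \mbb{P}U_0 \in \mcal{E}$ and $\tilde{U}_0 = (\mbb{I}-\mbb{P})U_0 \in \tilde{\mcal{E}}$, I would let $V$ and $W$ denote the solutions of \eqref{eq:PropEqHL} with initial data $\hat{U}_0$ and $\tilde{U}_0$, respectively. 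By linearity $U = V + W$, and by Proposition~\ref{E_Ep_inv} one has $V(t,\cdot) \in \mcal{E}$ and $W(t,\cdot)\in\tilde{\mcal{E}}$ for all $t>0$. Since the decomposition \eqref{eq:LWES_HelmLer} is unique, this identifies the Helmholtz-Hodge-Leray components of $U$ as $\hat{U} = V$ and $\tilde{U} = W$; in particular $\mbb{P}U(t) = \hat{U}(t)$ for all $t$.

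For part~(\ref{cond1}), the essential structural fact is that $\mcal{E}$ is precisely the kernel of $L$, so that $L(\hat{U}) = 0$. Consequently $\hat{U} = V$ satisfies
\begin{equation*}
  \Dt \hat{U} + H(\hat{U}) + \tfrac{1}{\veps} L(\hat{U}) = \Dt \hat{U} + H(\hat{U}) = 0 ,
\end{equation*}
with $\hat{U}(0,\cdot) = \hat{U}_0$. This is exactly the initial value problem \eqref{eq:PropEqH}--\eqref{eq:PropIcH} satisfied by $\bar{U}$, whence uniqueness of solutions yields $\hat{U} = \bar{U}$. Part~(\ref{cond2}) is then immediate from the construction, since $\tilde{U} = W$ solves \eqref{eq:PropEqHL} with initial datum $\tilde{U}_0$ by definition.

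For the energy identities I would first establish that both $H$ and $L$ are skew-symmetric on $\solspc$ with respect to the inner product \eqref{eq:U1U2ip}. Using that $\bu$ is constant (so $\dvg \bu = 0$) and the periodicity on $\mbb{T}^d$, integration by parts gives $(H(U),U) = \tfrac{1}{2}\int_{\mbb{T}^d}\dvg\!\big(\bu\,\abs{U}^2\big)\,\dd x = 0$ and $(L(U),U) = \ba\int_{\mbb{T}^d}\dvg(\vrho\, u)\,\dd x = 0$. Testing the equation for $\hat{U}$ with $\hat{U}$ and the full equation for $\tilde{U}$ with $\tilde{U}$ then gives $\tfrac{\dd}{\dd t}E_{in} = -2(H(\hat{U}),\hat{U}) = 0$ and $\tfrac{\dd}{\dd t}E_{ac} = -2(H(\tilde{U}),\tilde{U}) - \tfrac{2}{\veps}(L(\tilde{U}),\tilde{U}) = 0$, which is \eqref{eq:PropEnCons}.

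Finally, the estimate \eqref{eq:estSchochetLin} follows from the conservation of $E_{ac}$: since $U(t) - \mbb{P}U(t) = \tilde{U}(t)$, one has $\norm{U(t)-\mbb{P}U(t)}^2 = E_{ac}(t) = E_{ac}(0) = \norm{U_0 - \mbb{P}U_0}^2$, so the hypothesis $\norm{U_0-\mbb{P}U_0} = \mcal{O}(\veps)$ propagates unchanged to all times. The main obstacle, and the place where Proposition~\ref{E_Ep_inv} does the real work, is justifying that the two Helmholtz-Hodge-Leray components evolve independently---equivalently, that $\mbb{P}$ commutes with the flow. Once this decoupling is secured, so that the stiff term $L/\veps$ acts only on $\tilde{U}$ and is annihilated on $\hat{U}$, the remaining steps are routine energy estimates based on the skew-symmetry of $H$ and $L$.
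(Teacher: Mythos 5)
Your proof is correct. Note that the paper itself offers no proof of this theorem---it is imported verbatim from Dellacherie's work \cite{dellacherie}---so there is nothing in the source to compare against; your reconstruction (linearity plus the invariance of $\mcal{E}$ and $\tilde{\mcal{E}}$ from Proposition~\ref{E_Ep_inv}, uniqueness of the Helmholtz--Hodge--Leray decomposition to identify $\hat{U}$ and $\tilde{U}$ with the separately evolved components, the observation that $L$ vanishes on $\mcal{E}$ so the stiff term drops from the $\hat{U}$-equation, and skew-symmetry of $H$ and $L$ for the two energy identities and hence \eqref{eq:estSchochetLin}) is exactly the standard argument and is complete.
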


Proposition~\ref{propLin} lies at the core of the analysis of
numerical schemes presented in \cite{dellacherie}. Depending on the
order of accuracy, numerical schemes introduce numerical diffusion,
dispersion or higher order correction terms in the modified partial
differential equations (MPDE). However, we desire that the numerical
solutions which satisfy the MPDE also exhibit properties close to
those of the solutions of the continuous system. One of the key
properties is to satisfy the estimate \eqref{eq:estSchochetLin} which
states that a solution remains close to $\mcal{E}$ for all times $t >
0$ if it is so at time $t=0$. The following proposition guarantees a
sufficient condition to ensure \eqref{eq:estSchochetLin}, which also
accommodates any general linear discretisation. 
\begin{proposition}\cite{dellacherie}
\label{F_einv}
Let $U$ be a solution of the IVP:
\begin{align} 
    \Dt U + \mcal{F}_x U &= 0, \ t>0, \
    x\in\mbb{T}^d, \label{eq:mThmEq}  \\
    U(0,x) &= U_0(x), \ x\in\mbb{T}^d, \label{eq:mThmIc} 
  \end{align}
  which is assumed to be well-posed in
  $L^\infty\left([0,\infty);\solspc\right)$, with $\mcal{F}_x $ a
  linear spatial differential operator. Then the following conclusions
  hold.
  \begin{enumerate}[(i)]
  \item \label{ThmDelCond1}
    The solution $U$ satisfies the estimate 
    \begin{equation} \label{eq:ThmDelEst1}
      \norm{U_0-\mbb{P}U_0} = \mcal O(\veps) \implies
      \norm{U(t)-\bar{U}(t)} = \mcal O(\veps), \ \text{for all} \ t>0,  
    \end{equation}
    where $\bar{U}$ is a solution of \eqref{eq:mThmEq} with the initial
    condition $\bar{U}(0) = \mbb{P}U_0$. However, we don't have the 
    apriori estimate $\norm{U(t) - \mbb{P}U(t)} = \mcal O(\veps)$ for
    all $t>0$.
  \item \label{ThmDelCond2}
    When the operator $\mcal{F}_x$ leaves $\mcal{E}$ invariant, i.e.\
    whenever $U_0\in\mcal{E}$ implies $U(t)\in\mcal{E}$ for all $t>0$,
    then $U$ satisfies the estimate \eqref{eq:ThmDelEst1}, and in
    addition we have 
    \begin{equation} 
      \label{eq:ThmDelEst2}
      \norm{U_0-\mbb{P}U_0} = \mcal O(\veps) \implies \norm{U(t) -
        \mbb{P}U(t)} = \mcal O(\veps)  \ \text{for all} \ t>0.
    \end{equation}
  \end{enumerate}
\end{proposition}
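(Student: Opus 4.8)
The plan is to exploit the \emph{linearity} of the problem \eqref{eq:mThmEq} together with the uniform stability furnished by its well-posedness. First I would record two structural facts. Since $\mcal{F}_x$ is linear, the solution map $S(t)\colon U_0\mapsto U(t)$ is linear, and the assumed well-posedness in $L^\infty\left([0,\infty);\solspc\right)$ supplies a stability bound
\begin{equation*}
  \norm{U(t)} = \norm{S(t)U_0} \le C\norm{U_0}, \quad t>0,
\end{equation*}
with $C$ independent of $t$ (for the energy-conserving operators of interest, one even has $C=1$ by Proposition~\ref{energyconserv}). Second, by the Helmholtz--Hodge--Leray decomposition \eqref{eq:LWES_HelmLer}, $\mbb{P}$ is the \emph{orthogonal} projection onto $\mcal{E}$, so that $\norm{\mbb{I}-\mbb{P}}\le 1$.

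For part~\eqref{ThmDelCond1}, set $W:=U-\bar U$. Because $U$ and $\bar U$ solve the same linear equation \eqref{eq:mThmEq}, $W$ is again a solution, now with initial datum $W(0)=U_0-\mbb{P}U_0$. Applying the stability bound to $W$ yields
\begin{equation*}
  \norm{U(t)-\bar U(t)} = \norm{W(t)} \le C\norm{U_0-\mbb{P}U_0} = \mcal O(\veps),
\end{equation*}
which is precisely \eqref{eq:ThmDelEst1}. To see why the stronger conclusion is unavailable in general, I would split $U(t)-\mbb{P}U(t)=(\mbb{I}-\mbb{P})U(t)$ using $U=\bar U+W$:
\begin{equation*}
  U(t)-\mbb{P}U(t) = (\mbb{I}-\mbb{P})\bar U(t) + (\mbb{I}-\mbb{P})W(t).
\end{equation*}
Here the second term is controlled by $\norm{W(t)}=\mcal O(\veps)$, but the first term $(\mbb{I}-\mbb{P})\bar U(t)$ is the acoustic part of $\bar U$, which for a general $\mcal{F}_x$ need not remain small since $\bar U$ may leave $\mcal{E}$.

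For part~\eqref{ThmDelCond2}, the invariance hypothesis is exactly what annihilates the obstructing term. Since $\bar U(0)=\mbb{P}U_0\in\mcal{E}$ and $\mcal{F}_x$ leaves $\mcal{E}$ invariant, we have $\bar U(t)\in\mcal{E}$ for all $t$, whence $(\mbb{I}-\mbb{P})\bar U(t)=0$. The decomposition above then collapses to $U(t)-\mbb{P}U(t)=(\mbb{I}-\mbb{P})W(t)$, and
\begin{equation*}
  \norm{U(t)-\mbb{P}U(t)} = \norm{(\mbb{I}-\mbb{P})W(t)} \le \norm{W(t)} = \mcal O(\veps),
\end{equation*}
giving \eqref{eq:ThmDelEst2}.

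The soft ingredients here — linearity, the projection identities, and the triangle inequality — are routine; the one genuinely load-bearing point is the uniform-in-$t$ stability bound $\norm{S(t)}\le C$. The hard part will therefore be to confirm that well-posedness in $L^\infty\left([0,\infty);\solspc\right)$ really delivers a constant $C$ independent of both $t$ and $\veps$, since without uniformity in $t$ the $\mcal O(\veps)$ estimates would degrade as $t\to\infty$. For the concrete operators under study this follows from energy conservation, which makes $S(t)$ an isometry.
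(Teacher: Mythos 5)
The paper does not prove this proposition; it is quoted verbatim from \cite{dellacherie} as a known result, so there is no in-paper argument to compare against. Your proof is correct and is essentially the standard (and original) argument: linearity plus the uniform-in-time stability bound gives part~(i) via $W=U-\bar U$, and the decomposition $(\mbb{I}-\mbb{P})U=(\mbb{I}-\mbb{P})\bar U+(\mbb{I}-\mbb{P})W$ isolates exactly the term that $\mcal{E}$-invariance kills in part~(ii). You are also right to flag the uniform-in-$t$ (and, since $\mcal{F}_x$ may contain $1/\veps$, uniform-in-$\veps$) stability constant as the one load-bearing hypothesis; this is what ``well-posed in $L^\infty\left([0,\infty);\solspc\right)$'' is meant to encode, and for the operators at hand it holds with $C=1$ by energy conservation.
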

\begin{remark}
  The second  part of the above Proposition~ \ref{F_einv} give us the
  importance of $\mcal{E}$ invariance for any numerical scheme. It states
  that if a scheme is $\mcal{E}$-invariant, then if the initial data
  $U_{0}$ is almost in the well-prepared subspace $\mcal{E}$, the
  solution at all later times also lives close $\mcal{E}$. Loosely
  speaking, the estimate \eqref{eq:ThmDelEst2} states that if the
  initial data is almost incompressible, then the solution for all time
  $t>0$ is also almost incompressible. It was observed in
  \cite{dellacherie} that satisfying the condition
  \eqref{eq:ThmDelEst2} avoids the creation of spurious acoustic waves
  in the numerical solution. Our numerical experiments reported in
  Section~\ref{sec:Numerics} clearly validate this observation. 
\end{remark}
\section{Asymptotic Preserving and Asymptotically Accurate IMEX-RK  
  Schemes}
\label{sec:AP_AA_IMEX-RK}

We devote this section to recall the notions of AP and AA schemes as
done in \cite{ArunSamArxiv}. Further, in a nutshell, we review the
wellknown IMEX-RK schemes for stiff systems of ODEs which are employed to
approximate the time derivatives in system \eqref{eq:LWESWA}.

\subsection{Asymptotic Preserving Property}
One of the essential properties of a numerical approximation for a
singular perturbation problem is its ability to capture the solution of
the limit system as well as the solution of the original problem. The
AP methodology not only provides a framework to address the
convergence of the numerical solution to that of the limit system but
also takes care of the stability restrictions such that they don't 
deteriorate in the singular limit.
\label{sec:ap-schemes}
\begin{definition}
\label{defn:AP}
  Let $\mcal{P}^{\veps}$ denote a singularly perturbed problem with the
  perturbation parameter $\veps$. Let $\mcal{P}^0$ denote the limiting
  system of $\mcal{P}^\veps$ for $\veps \to 0$. A discretisation
  $\mcal{P}_{h}^{\veps}$ of  $\mcal{P^{\veps}}$, with $h$ being the
  discretisation parameter, is called AP, if 
  \begin{enumerate}[(i)]
    \item $\mcal{P}^{0}_h$ is a consistent discretisation of the
      problem $\mcal{P}^0$, called the asymptotic consistency, and
    \item the stability constraints on $h$ are independent of $\veps$,
      called the asymptotic stability.
  \end{enumerate} 
In other words, the following diagram commutes:
\begin{equation*}
    \begin{CD}
      \mcal{P}^{\veps}_{h} @>{h\to 0}>> \mcal{P}^{\veps}\\
      @VV{\veps\to 0}V  @VV{\veps\to 0}V \\
      \mcal{P}^{0}_{h} @>{h\to 0}>> \mcal{P}^0
    \end{CD} 
  \end{equation*}
\end{definition}
\subsection{Asymptotic Accuracy}
\label{sec:aa-schemes}

We note from Proposition~\ref{E_Ep_inv} that a solution of the wave
equation system lives in $\mcal{E}$ at all times if the initial data
is taken from $\mcal{E}$. It was shown in \cite{dellacherie} that a
sufficient condition for a numerical scheme for the wave equation
system to be accurate at low Mach numbers, i.e.\ it is free from the
creation of spurious waves, is the $\mcal{E}$-invariance. Based on
this idea, in \cite{ArunSamArxiv}, the notion of asymptotic
accuracy is defined as the following.  
\begin{definition}
  \label{defn:AA}
  A numerical approximation for the wave equation system
  \eqref{eq:LWESWA} is said to be asymptotically accurate (AA), if it
  leaves the incompressible subspace $\mcal{E}$ invariant.   
\end{definition}

\subsection{IMEX-RK Time Discretisation}
IMEX-RK schemes provide a robust and efficient framework to design AP
schemes for singular perturbation problems. In this work, we only
consider a subclass of the IMEX-RK schemes, namely diagonally implicit
or (DIRK) schemes. An $s$-stage IMEX-RK scheme is characterised by the
two $s\times s$ lower triangular matrices $\tilde{A}=
(\tilde{a}_{i,j})$, and $A=(a_{i,j})$, the coefficients $\tilde{c}=(\tilde{c}_1,
\tilde{c}_2,\ldots,\tilde{c}_s)$ and $c=(c_1, c_2, \ldots, c_s)$, and
the weights $\tilde{\omega}=(\tilde{\omega}_1, \tilde{\omega}_2,
\ldots,\tilde{\omega}_s)$ and $\omega = (\omega_1, \omega_2, \ldots
,\omega_s)$. Here, the entries of $\tilde{A}$ and $A$ satisfy the
conditions $\tilde{a}_{i,j}=0$ for $ j \geq i$, and $a_{i,j}=0$ for
$j>i$. Let us consider the following stiff system of ODEs in an
additive form:   
\begin{equation}
  \label{eq:stiff_ODE}
  y^\prime = f(t,y) + \frac{1}{\veps} g(t,y),
\end{equation}
where $0<\veps\ll 1$ is called the stiffness parameter. The functions $f$ and
$g$ are known as, respectively, the non-stiff part and the stiff part of the
system \eqref{eq:stiff_ODE}; see, e.g.\ \cite{hairer-wanner-2}, for a
comprehensive treatment of such systems. 

Let $y^n$ be a numerical solution of \eqref{eq:stiff_ODE} at
time $t^n$ and let $\Dlt$ denote a fixed timestep. An $s$-stage
IMEX-RK scheme, cf.,\ e.g.\ \cite{AscherRuuthSpiteri,
pareschi-russo-conf}, updates $y^n$ to $y^{n+1}$ through $s$ 
intermediate stages:  
\begin{align}
  Y_i &= y^n + \Dlt \sum\limits_{j=1}^{i-1}\tilde{a}_{i,j} f(t^n + \tilde{c}_j\Dlt, Y_j) + 
        \Dlt\sum \limits_{j=1}^s a_{i,j} \frac{1}{\veps}g(t^n + c_j
        \Dlt,Y_j), \ 1 \leq i \leq s, \label{eq:imex_Yi} \\
  y^{n+1} &= y^n  + \Dlt \sum\limits_{i=1}^{s}\tilde{\omega}_{i} f(t^n
            + \tilde{c}_i\Dlt, Y_i) + \Dlt\sum \limits_{i=1}^s
            \omega_{i}\frac{1}{\veps} g(t^n +
            c_i\Dlt,Y_i). \label{eq:imex_yn+1} 
\end{align}



In order to further simplify the analysis of the schemes presented in
this paper, we restrict ourselves only to two types of DIRK schemes,
namely the type-A and type-CK schemes which are defined below; see
\cite{KENNEDY2003139} for details. 
\begin{definition}
\label{eq:typeA_CK}
An IMEX-RK scheme is said to be of 
\begin{itemize}
\item type-A, if the matrix $A$ is invertible; 
\item type-CK, if the matrix $A \in \mbb{R}^{s \times s}, \ s \geq 2$,
  can be written as  
\begin{equation*}  
  A = 
  \begin{pmatrix}
    0 & 0 \\
    \alpha & A_{s-1 }
  \end{pmatrix},
\end{equation*}
where $\alpha \in \mbb{R}^{s-1} $ and $A_{s-1} \in \mbb{R}^{s-1 \times
s-1}$ is invertible.
\end{itemize} 
\end{definition}
The results presented in later sections, are obtained using both the
first order Euler(1,1,1), and second order ARS(2,2,2) schemes for time
discretisations; see \cite{pareschi-russo-conf, pareschi-russo} for
their definitions. Here, in the triplet $(s,\sigma,p)$, $s$ is the number of
stages of the implicit part, the number $\sigma$ gives the number of
stages for the explicit part and $p$ gives the overall order of the
scheme. We refer the interested reader to \cite{hairer-wanner-2,
  KENNEDY2003139, pareschi-russo-conf, pareschi-russo} and the
references therein for a detailed study of IMEX-RK schemes. 
\begin{hypothesis}
  \label{hyp:H2}
  We suppose that the IMEX-RK scheme under consideration is of Type-A
  or Type CK. 
\end{hypothesis}

\section{Time Semi-discrete Scheme and Its Analysis}
\label{sec:TimeSemiDiscrete}

In this section we present our time semi-discrete scheme for the wave
equation system \eqref{eq:LWESWA} obtained after approximation of 
the time derivatives using the IMEX-RK methodology described in
Section~\ref{sec:AP_AA_IMEX-RK}. We carry out a detailed analysis of
the scheme, and show some if its key properties, namely its
solvability, i.e.\ the existence of a numerical solution for any fixed 
$\veps>0$, the AP property and the asymptotic accuracy.   

\subsection{Time Semi-discrete Scheme}
\label{sec:time_semi-disc}

As a first step in defining a time semi-discrete scheme, we split the
fluxes in \eqref{eq:LWESWA} into a stiff and a non-stiff part,
yielding  
\begin{equation}
  \label{eq:flux_split}
  G(U) := \frac{\ba}{\veps} \begin{pmatrix}
    u \\
    \vrho 
  \end{pmatrix}, \ 
  F(U) := \begin{pmatrix}
    \vrho \bu \\
    \bu \otimes u
  \end{pmatrix}. 
\end{equation}

Let $0<t^1<t^2<\cdots<t^n<\cdots$ be an increasing sequence of
times. In the following, $f^n$ denotes an approximation to the value
of a function $f(t,x)$ at time $t^n$, i.e.\ $f^n(x)\sim
f(t^n,x)$. Treating $F$ explicitly, and $G$ implicitly, the IMEX-RK
time semi-discrete scheme can be obtained as follows. 
\begin{definition}
  Given an approximation $(\vrho^n, u^n)$ of the numerical solution at 
  time $t^n$, and a timestep $\Dlt$, the $k^{th}$ stage of an
  $s$-stage IMEX-RK scheme for the wave equation system
  \eqref{eq:LWESWA} is defined by 
\begin{align}
  \vrho^{k} &= \vrho^n - \Dlt \tilde{a}_{k,\ell}(\bu\cdot\nabla)\vrho^{\ell}
               -\Dlt a_{k,l}\frac{\ba}{\veps}\dvg u^{l}, \ \mbox{for
              each} \ k=1,2,\ldots,s, \label{eq:weq_varrho_k_semi_disc} \\
  u^{k}&=u^n-\Dlt \tilde{a}_{k,\ell} (\bu\cdot\nabla)u^{\ell} 
           - \Dlt a_{k,l}\frac{\ba}{\veps}\nabla\vrho^{l},  \
         \mbox{for each} \ k=1,2,\ldots,s.
           \label{eq:weq_u_k_semi_disc}
\end{align}
The approximate numerical solutions $\vrho^{n+1}$ and $u^{n+1}$ at
time $t^{n+1}$ are defined as 
\begin{align}
  \vrho^{n+1} &= \vrho^n - \Dlt \tilde{\omega}_k (\bu\cdot\nabla)\vrho^k
               -\Dlt \omega_k \frac{\ba}{\veps}\dvg
               u^{k}, \label{eq:weq_varrho_semi_disc_finupd} \\ 
  u^{n+1}&=u^n-\Dlt \tilde{\omega}_k  (\bu\cdot\nabla)u^k 
           - \Dlt \omega_k \frac{\ba}{\veps}\nabla\vrho^{k}.
           \label{eq:weq_u_semi_disc_finupd} 
\end{align}
\end{definition}

In the above, and throughout the rest of this paper, we follow the
convention that a repeated index always denotes the summation with
respect to that index. Here, the index $k$ assumes values in
$\{1,2,\ldots,s\}$, and the indices $\ell$ and $l$ are used to denote, 
respectively, the summation in the explicit and implicit terms,
i.e.\ they assume values in the sets $\{1,2, \ldots, k-1\}$ and
$\{1,2,\ldots,k\}$.

\subsection{Solvability of the Time Semi-discrete Scheme}

The aim of this subsection is to establish the existence of a numerical
solution to
\eqref{eq:weq_varrho_k_semi_disc}-\eqref{eq:weq_u_semi_disc_finupd}
using variational formulations, and classical saddle point theory;
see, e.g.\ \cite{brezzi-fortin-book,ciarlet-book} for more details. To
this end, let us consider the standard function spaces    
\begin{equation}
  \label{eq:Hdiv_L2}
  V:=\Hdiv, \ \mbox{and} \ M:=\Sint. 
\end{equation}
In
\eqref{eq:weq_varrho_k_semi_disc}-\eqref{eq:weq_u_semi_disc_finupd},
we multiply the density updates by a test function $\ld\in M$ and the
velocity updates by a test function $v\in V$, and integrate by parts to
get the following weak formulation.

For $k=1,2,\ldots,s$, find $(u^k,\vrho^k)\in V\times M$ satisfying
\begin{align}
  (u^k,v)-\Dlt a_{k,k}\frac{\ba}{\veps}(\dvg
  v,\vrho^k)&=(u^n,v)+\Dlt a_{k,\ell}\frac{\ba}{\veps}(\dvg
              v,\vrho^\ell), \ \mbox{for all} \ v\in
              V,\label{eq:imp_wform_u_k} \\  
  -\Dlt a_{k,k}\frac{\ba}{\veps}(\dvg
  u^k,\ld)-(\vrho^k,\ld)&=-(\vrho^n,\ld)+\Dlt a_{k,\ell}\frac{\ba}{\veps}(\dvg
  u^\ell,\ld),  \ \mbox{for all} \ \ld\in
                          M. \label{eq:imp_wform_rho_k}  
\end{align}
Finally, find $(u^{n+1},\vrho^{n+1})\in V\times M$ satisfying
\begin{align}
  (u^{n+1},v)&=(u^n,v)+\Dlt \omega_k\frac{\ba}{\veps}(\dvg v,\vrho^k),
               \ \mbox{for all} \ v\in V,\label{eq:imp_wform_u_n+1} \\  
  (\vrho^{n+1},\ld)&=(\vrho^n,\ld)-\Dlt \omega_k\frac{\ba}{\veps}(\dvg
                     u^k,\ld),  \ \mbox{for all} \ \ld\in
                     M. \label{eq:imp_wform_rho_n+1} 
\end{align}
Note that the semi-discrete scheme
\eqref{eq:weq_varrho_k_semi_disc}-\eqref{eq:weq_u_semi_disc_finupd}
admits a solution if, and only if, the weak formulations
\eqref{eq:imp_wform_u_k}-\eqref{eq:imp_wform_rho_n+1} admit a
solution. 
\begin{theorem}
  \label{thm:time_disc_existence}
  Suppose an approximation $(u^n,\vrho^n)\in V\times M$ of the
  numerical solution at time $t^n$, and a timestep $\Dlt$ are chosen.
  Then, under the Hypothesis~\ref{hyp:H2}, the weak formulations 
  \eqref{eq:imp_wform_u_k}-\eqref{eq:imp_wform_rho_k} are uniquely
  solvable for a $(u^k,\vrho^k)\in V\times M$ for
  $k=1,2,\ldots,s$. Consequently,
  \eqref{eq:imp_wform_u_n+1}-\eqref{eq:imp_wform_rho_n+1} defines
  uniquely an approximate numerical solution $(u^{n+1},\vrho^{n+1})\in
  V\times M$ at time $t^{n+1}=t^n+\Dlt$.    
\end{theorem}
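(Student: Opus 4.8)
The plan is to recognize that the weak formulation
\eqref{eq:imp_wform_u_k}-\eqref{eq:imp_wform_rho_k} is a classical
\emph{saddle point} (mixed) variational problem and to invoke the
Babu\v{s}ka--Brezzi (inf-sup) theory. First I would rewrite the $k^{th}$
stage system in abstract form. Setting $\beta := \Dlt a_{k,k}
\frac{\ba}{\veps}$, the left-hand side of
\eqref{eq:imp_wform_u_k}-\eqref{eq:imp_wform_rho_k} is governed by two
bilinear forms: a symmetric form $a(u^k,v) := (u^k,v)$ on $V\times V$ and a
coupling form $b(v,\vrho^k) := -\beta(\dvg v,\vrho^k)$ on $V\times M$. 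The
right-hand sides collect the known quantities at time $t^n$ together with
the already-computed lower-stage values $\vrho^\ell, u^\ell$ (with $\ell<k$
in the explicit sums); by the lower-triangular structure of $\tilde A$ and
$A$ these are data, so at each stage I genuinely face a linear problem with
$(u^k,\vrho^k)$ as the only unknowns. The key point to note is that under
Hypothesis~\ref{hyp:H2} (Type-A or Type-CK) the relevant diagonal entry
$a_{k,k}$ is nonzero, so $\beta\neq0$ and the coupling form does not
degenerate.

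The key steps, in order, are the three standard hypotheses of mixed problem
theory. (1) \textbf{Continuity}: both $a(\cdot,\cdot)$ and $b(\cdot,\cdot)$
are bounded on $V\times V$ and $V\times M$ respectively; this is immediate
from Cauchy--Schwarz since $\norm{\dvg v}_{\Sint}\le\norm{v}_{\Hdiv}$ and
$\norm{\vrho^k}_{\Sint}$ controls the $M$-norm. (2) \textbf{Coercivity of
$a$ on the kernel}: here I exploit the fact that $a(v,v)=(v,v)=\norm{v}_{\Sint}^2$,
and on the kernel $K:=\{v\in V: b(v,\ld)=0\ \forall\ld\in M\}$ we have
$\dvg v=0$, whence $\norm{v}_{\Hdiv}^2=\norm{v}_{\Sint}^2+\norm{\dvg v}_{\Sint}^2
=\norm{v}_{\Sint}^2=a(v,v)$; so $a$ is coercive on $K$ with constant $1$.
(This is the clean simplification that makes coercivity hold on all of $V$
rather than merely on the kernel.) (3) \textbf{Inf-sup condition on $b$}:
I must show $\inf_{\ld\in M}\sup_{v\in V} b(v,\ld)/(\norm{v}_V\norm{\ld}_M)
\ge\gamma>0$. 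The crucial observation that the sign in
\eqref{eq:imp_wform_rho_k} is $-(\vrho^k,\ld)$, not zero, so strictly
speaking this is a \emph{perturbed} saddle point problem with a
$(2,2)$-block; I would therefore prefer the even cleaner route of verifying
the Brezzi conditions for the perturbed system, or equivalently invoking the
Babu\v{s}ka--Ne\v{c}as theory directly on the product space $V\times M$.

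The cleanest argument, which I expect to prefer, is to show that the full
bilinear form
\[
\mcal B\big((u^k,\vrho^k),(v,\ld)\big) := (u^k,v) - \beta(\dvg v,\vrho^k)
  - \beta(\dvg u^k,\ld) - (\vrho^k,\ld)
\]
on $(V\times M)\times(V\times M)$ satisfies the global inf-sup (Ne\v{c}as)
condition, which then yields existence, uniqueness, and stability of
$(u^k,\vrho^k)$ in one stroke. Testing with $(v,\ld)=(u^k,-\vrho^k)$ gives
$\mcal B((u^k,\vrho^k),(u^k,-\vrho^k)) = \norm{u^k}_{\Sint}^2
+\norm{\vrho^k}_{\Sint}^2$, which controls the $\Sint$-norms; recovering the
full $\dvg$-norm of $u^k$ then requires the inf-sup estimate on $b$, for
which I would construct, given $\ld\in M$, a velocity $v\in V$ with
$\dvg v = \ld - \bar\ld$ (where $\bar\ld$ is the mean, handling the
zero-mean constraint imposed by the divergence theorem on the torus
$\mbb{T}^d$) and $\norm{v}_{\Hdiv}\lesssim\norm{\ld}_{\Sint}$, e.g.\ via
$v=\nabla\phi$ with $\phi$ solving a periodic Poisson problem.
\textbf{The main obstacle} I anticipate is precisely this inf-sup/surjectivity
of the divergence on the periodic domain: the divergence operator
$\Hdiv\to\Sint$ has range orthogonal to the constants, so I must treat the
constant (zero-mean) component of $\vrho^k$ separately and confirm that the
$-(\vrho^k,\ld)$ term supplies the missing control on that mode, thereby
closing the estimate on all of $M$. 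Once the global inf-sup constant is
bounded below uniformly, Ne\v{c}as's theorem gives unique solvability of
each stage; the final update
\eqref{eq:imp_wform_u_n+1}-\eqref{eq:imp_wform_rho_n+1} is then trivially
well-defined since its left-hand side is just the $L^2$ Riesz isomorphism,
completing the induction over $k$.
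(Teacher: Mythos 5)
Your proposal is correct and follows essentially the same route as the paper: the paper also recasts each stage as a perturbed saddle point problem, gets coercivity of $a$ on the divergence-free kernel for free (where $\norm{v}_{\Hdiv}=\norm{v}_{L^2}$), proves the inf-sup condition for $b$ by taking $v=\nabla\tilde w$ with $\tilde w$ solving a Poisson problem, and uses the quotient form of the inf-sup (modulo the constants in $M_0$) together with the $-(\vrho^k,\ld)$ block to handle the constant mode — exactly the obstacle you flag. The only quibble is your claim that Hypothesis~\ref{hyp:H2} forces $a_{k,k}\neq 0$: for a type-CK scheme $a_{1,1}=0$, though that stage is then explicit and trivially solvable, a degenerate case the paper also passes over silently.
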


The weak formulations 
\eqref{eq:imp_wform_u_k}-\eqref{eq:imp_wform_rho_k} can be recast in
the saddle point form: find $(u^k,\vrho^k)\in V\times M$ satisfying 
\begin{equation}\label{eq:ak_bk}
\begin{aligned}
  a_k(u^k, v) + b_k(v, \vrho^k) &= l_k(v), \  \mbox{for all} \ v \in V, \\
  b_k(u^k, \lambda) - c_k(\vrho^k, \lambda) & = \chi_k (\lambda),
  \  \mbox{for all} \ \lambda \in M.
\end{aligned}
\end{equation}
Here, the bilinear forms $a_k\colon V \times V \to \mbb{R}$, $b_k\colon V
\times M \to \mbb{R}$, and $c_k\colon M \times M \to \mbb{R}$ are
defined as  
\begin{align}
  a_k(v, w ) &:= (v, w), \ \mbox{for each} \ v, w \ \in V, \label{eq:ak_defn}\\
  b_k(v,\lambda)  &:= -\Dlt a_{k,k}\frac{\ba}{\veps} (\dvg v, \lambda), \ \mbox{for each}  \ v
  \in V, \ \lambda \in M,  \label{eq:bk_defn}\\
  c_k(\ld, \mu) &:= (\ld, \mu), \ \mbox{for each}  \ \ld, \mu \in
  M, \label{eq:ck_defn} 
\end{align}
and the linear forms $l\colon V\to\mbb{R}$, and $\chi\colon M \to
\mbb{R}$ are defined as
\begin{equation}
\begin{aligned}
  l_k(v) &:= (u^n, v)+\Dlt a_{k,\ell}\frac{\ba}{\veps}(\dvg
              v,\vrho^\ell), \  \mbox{for each} \ v \ \in V,\\
  \chi_k(\lambda) &:= - (\rho^n, \lambda)+ \Dlt a_{k,\ell}\frac{\ba}{\veps}(\dvg
  u^\ell,\ld), \  \mbox{for each} \
  \lambda \ \in M.
\end{aligned}
\end{equation}
Once the existence of $(u^k,\vrho^k)\in V\times M$ is established for
$k=1,2,\ldots,s$,
\eqref{eq:imp_wform_u_n+1}-\eqref{eq:imp_wform_rho_n+1} then uniquely
defines the approximate numerical solution at
$t^{n+1}$. Note that the saddle point problem in
\eqref{eq:imp_wform_u_k}-\eqref{eq:imp_wform_rho_k} is not in the
standard form. We make use of the following result from
\cite{brezzi-fortin-book} to establish the existence and uniqueness of
\eqref{eq:ak_bk}.   

\begin{theorem}[Babu\v{s}ka-Brezzi Inf-sup Theorem]\label{thm:bb}
Let $V$ and $M$ be two Hilbert spaces, and let $a\colon V \times V \to
\mbb{R}$, $b\colon V \times M \to \mbb{R}$, and $c\colon M \times M$ be
three continuous bilinear forms with the following properties.

$a$ is positive semi-definite, i.e.\ $a(v,v)\geq 0$ for all $v\in V$,
and there exists a constant $\alpha>0$ such that
\begin{equation}
  \label{eq:a_bb}
  \inf_{v_0\in V_0}\sup_{w_0\in
    V_0}\frac{a(v_0,w_0)}{\norm{v_0}_V\norm{w_0}_V}\geq \alpha, \
  \mbox{and} \ 
    \inf_{w_0\in V_0}\sup_{v_0\in
    V_0}\frac{a(v_0,w_0)}{\norm{v_0}_V\norm{w_0}_V}\geq \alpha,
\end{equation}
where $V_0 := \{ v_0\in V\colon b(v_0,\ld)=0 \ \mbox{for all} \ \ld
\in M\}$. 

There exists a constant $\beta>0$ such that    
\begin{equation}
  \label{eq:b_bb}
  \sup_{v \in V}\frac{\abs{b(v,\ld)}}{\norm{v}_{V}}
  \geq \beta \inf_{\ld_0 \in M_0}\norm{\ld_0+\ld}_{M}, \ \mbox{for all} \
  \ld\in M,  
\end{equation}
where $M_0 := \{\ld_0 \in M\colon b(v,\ld_0)=0 \ \mbox{for all} \ v
\in V \}$.

$c$ is positive semi-definite and symmetric, i.e.\ $c(\ld,\ld)\geq 0$ for
all $\ld\in M$, and $c(\ld,\mu)=c(\mu,\ld)$ for all $\ld,\mu\in
M$. Further, there exists a constant $\gamma>0$, such that for every
$\ld\in M_0^\bot$, and for every $\epsilon>0$, the solution $\ld_0\in
M_0$ of the equation
\begin{equation}
  \label{eq:c_condn}
  \epsilon (\ld_0,\mu)_M+c(\ld_0,\mu)=-c(\ld,\mu), \ \mbox{for all} \
  \mu\in M,
\end{equation}
is bounded by
\begin{equation}
  \label{eq:c_condn2}
  \gamma\norm{\ld_0}_M\leq \norm{\ld}_M,
\end{equation}
where $(\cdot,\cdot)_M$ and $M_0^\bot$ are, respectively, the
innerproduct in $M$, and the orthogonal complement of $M_0$. 

Finally, let $l \colon V \to \mbb{R}$ and $\chi \colon M \to \mbb{R}$ be two
continuous linear forms. Then the variational problem: find $(u,\vrho)
\in V \times M$, such that   
\begin{equation}
\begin{aligned}
a(u, v) + b(v, \vrho) &= l(v), \ \mbox{for all} \ v \in V, \\
b(u, \ld) - c(\vrho,\ld) &= \chi(\ld), \ \mbox{for all} \  \ld \in M
\end{aligned}
\end{equation}
has one and only one solution. 
\end{theorem}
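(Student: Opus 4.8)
The plan is to establish well-posedness by a penalisation argument: regularise the merely semi-definite form $c$ into a coercive one, solve the resulting non-degenerate saddle point problem by the classical theory, and then pass to the limit with the help of uniform a priori bounds. Each of the three structural hypotheses is tailored to one stage of this programme. The inf-sup pair \eqref{eq:a_bb} controls the component of the velocity lying in $V_0$; the inf-sup condition \eqref{eq:b_bb} controls the component of $u$ transversal to $V_0$ together with the $M_0^\bot$-component of $\vrho$; and the stability bound \eqref{eq:c_condn2} controls precisely the $M_0$-component of $\vrho$, where $c$ degenerates and the inf-sup machinery gives no information.

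First I would fix $\epsilon>0$ and replace $c$ by the form $c_\epsilon(\ld,\mu):=c(\ld,\mu)+\epsilon(\ld,\mu)_M$, which is continuous, symmetric, and coercive on $M$ with constant $\epsilon$, since $c\geq0$. For this regularised problem the lower-right block is now coercive, so one may eliminate $\vrho_\epsilon$ in favour of $u_\epsilon$ through the invertible operator associated with $c_\epsilon$, and substitution into the first equation leaves a positive semi-definite problem on $V$. Its well-posedness is the classical, non-degenerate case and follows from a standard Lax--Milgram/Brezzi argument using only \eqref{eq:a_bb} and \eqref{eq:b_bb}. This yields a unique $(u_\epsilon,\vrho_\epsilon)\in V\times M$ for every $\epsilon>0$; its stability constant, however, degenerates as $\epsilon\to0$, so the limit cannot be taken naively.

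The crux is to bound $\norm{u_\epsilon}_V$ and $\norm{\vrho_\epsilon}_M$ uniformly in $\epsilon$, which I would do in a chain. Testing the two equations with $v=u_\epsilon$ and $\ld=\vrho_\epsilon$ and subtracting gives the identity $a(u_\epsilon,u_\epsilon)+c_\epsilon(\vrho_\epsilon,\vrho_\epsilon)=l(u_\epsilon)-\chi(\vrho_\epsilon)$, which controls $c(\vrho_\epsilon,\vrho_\epsilon)$ and $\epsilon\norm{\vrho_\epsilon}_M^2$. Feeding the first equation into the inf-sup estimate \eqref{eq:b_bb} bounds the $M_0^\bot$-component of $\vrho_\epsilon$ by the data and $\norm{u_\epsilon}_V$; decomposing $u_\epsilon$ along $V=V_0\oplus V_0^\bot$, the inf-sup for $b$ (applied to the second equation) bounds the $V_0^\bot$-part, while \eqref{eq:a_bb} (applied to the first equation restricted to $V_0$) bounds the $V_0$-part. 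It then remains to control the $M_0$-component of $\vrho_\epsilon$: testing the second equation against $\mu\in M_0$ annihilates $b(u_\epsilon,\mu)$ and leaves an equation of the form \eqref{eq:c_condn}, so that the $\epsilon$-uniform bound \eqref{eq:c_condn2} applies and closes the estimate independently of $\epsilon$. I expect this last point to be the main obstacle, because $c$ need not respect the splitting $M=M_0\oplus M_0^\bot$, so the two components of $\vrho_\epsilon$ remain coupled through $c$; disentangling them requires the Cauchy--Schwarz inequality for the semi-definite form $c$ in tandem with \eqref{eq:b_bb}, and hypothesis \eqref{eq:c_condn}--\eqref{eq:c_condn2} is exactly the quantitative, $\epsilon$-uniform statement needed to neutralise this coupling.

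With the uniform bounds in hand the conclusion is routine. By weak compactness a subsequence satisfies $u_\epsilon\rightharpoonup u$ in $V$ and $\vrho_\epsilon\rightharpoonup\vrho$ in $M$; since all forms are continuous and linear and the penalty term $\epsilon(\vrho_\epsilon,\mu)_M\to0$, the pair $(u,\vrho)$ solves the original variational problem. For uniqueness I would rerun the same chain of estimates directly on the unregularised system (admissible since \eqref{eq:c_condn2} is assumed for every $\epsilon>0$) to obtain $\norm{u}_V+\norm{\vrho}_M\leq C\left(\norm{l}_{V'}+\norm{\chi}_{M'}\right)$; applied to the difference of two solutions, this a priori estimate forces that difference to vanish.
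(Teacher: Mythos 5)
You should first know what the paper actually does with Theorem~\ref{thm:bb}: it supplies no proof at all. The theorem is quoted as a known result from \cite{brezzi-fortin-book} and used as a black box in the proof of Theorem~\ref{thm:time_disc_existence}. So the only meaningful comparison is with the proof in that reference, and your proposal essentially reconstructs it: regularise $c$ to $c_\epsilon = c + \epsilon(\cdot,\cdot)_M$, solve the non-degenerate problem, derive $\epsilon$-uniform a priori bounds in which each of the three hypotheses controls exactly one component ($a$ and \eqref{eq:a_bb} the $V_0$-part of $u_\epsilon$, \eqref{eq:b_bb} the $V_0^\bot$-part of $u_\epsilon$ and the $M_0^\bot$-part of $\vrho_\epsilon$, and \eqref{eq:c_condn}--\eqref{eq:c_condn2} the $M_0$-part of $\vrho_\epsilon$), then pass to the weak limit. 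That is the standard route, and your identification of \eqref{eq:c_condn}--\eqref{eq:c_condn2} as precisely the $\epsilon$-uniform statement needed where $c$ degenerates is correct. One transcription point works in your favour: as printed in the paper, \eqref{eq:c_condn} is required ``for all $\mu\in M$'', which makes the equation over-determined in $\ld_0\in M_0$ and the hypothesis vacuous; it must be read as posed in $M_0$, i.e.\ tested against $\mu\in M_0$, which is exactly how you use it when you test the second equation against $\mu\in M_0$ to annihilate $b(u_\epsilon,\mu)$.

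There is, however, one genuine soft spot, in your final uniqueness step. Running your chain on the difference $(\bar u,\bar\vrho)$ of two solutions with homogeneous data gives $a(\bar u,\bar u)=0$ and $c(\bar\vrho,\bar\vrho)=0$; semi-definite Cauchy--Schwarz then yields $a(\bar u,\cdot)\equiv 0$ and $c(\bar\vrho,\cdot)\equiv 0$, the second equation gives $\bar u\in V_0$, the inf-sup \eqref{eq:a_bb} forces $\bar u=0$, and the first equation forces $\bar\vrho\in M_0$. At this point you are left with $\bar\vrho\in M_0$ satisfying $c(\bar\vrho,\cdot)\equiv 0$, and the hypotheses as stated do \emph{not} exclude such a nonzero $\bar\vrho$: take $c\equiv 0$ with $M_0\neq\{0\}$, for which \eqref{eq:c_condn}--\eqref{eq:c_condn2} hold trivially ($\ld_0=0$), yet $\vrho$ is determined only modulo $M_0$ and existence additionally needs the compatibility $\chi|_{M_0}=0$. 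So the claimed a priori estimate $\norm{u}_V+\norm{\vrho}_M\leq C(\norm{l}_{V'}+\norm{\chi}_{M'})$ cannot follow from the listed hypotheses alone; the same degeneracy also threatens your $\epsilon$-uniform bound on the $M_0$-component, where the $\chi(\mu)$ contribution behaves like $\chi|_{M_0}/\epsilon$ when $c$ vanishes on part of $M_0$. This is a lacuna inherited from the paper's transcription of the theorem rather than a flaw in your strategy (in \cite{brezzi-fortin-book} the formulation pins down the $M_0$-component, e.g.\ by working modulo the kernel or under compatibility on the data), and it is invisible in the paper's application, where $c_k$ defined in \eqref{eq:ck_defn} is the full $L^2$ innerproduct and hence coercive on $M$, so the $M_0$-difficulty is vacuous there. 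But as a proof of the statement exactly as quoted, your last paragraph asserts more than the hypotheses deliver, and you should either add the missing compatibility/quotient formulation or assume $c$ coercive on $M_0$.
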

\begin{proof}[Proof of Theorem~\ref{thm:time_disc_existence}] 
Clearly, the bilinear forms $a_k, b_k$ and $c_k$ are continuous on their
respective domains, and $a_k$ and $c_k$ are symmetric. The structural
condition \eqref{eq:c_condn2} is trivially satisfied for the bilinear
form $c$ defined in \eqref{eq:ck_defn}; see
\cite{brezzi-fortin-book}. Hence, we are left with verifying only the
conditions \eqref{eq:a_bb} and \eqref{eq:b_bb}. 

From the definition of $b_k$ it follows easily that if $v_0\in V$,
then $\dvg v_0=0$, and if $\ld_0\in M_0$, then $\nabla\ld_0=0$. Hence,
for $v_0\in V_0$, we have
$\norm{v_0}_{\Hdiv}=\norm{v_0}_{{\Sint}^d}$. To prove \eqref{eq:a_bb},
let $v_0\in V_0$. Now,
\begin{equation}
  \label{eq:a_infsup}
  \sup_{w_0\in V_0}\frac{a_k(v_0,w_0)}{\norm{w_0}_V} \geq
  \frac{a_k(v_0,v_0)}{\norm{v_0}_V} = \frac{(v_0,v_0)}{\norm{v_0}_V} =
  \norm{v_0}_V. 
\end{equation}
Hence, 
\begin{equation}
  \label{eq:a_infsupfinal}
  \inf_{v_0\in V_0}\sup_{w_0\in V_0}\frac{a_k(v_0,w_0)}{\norm{w_0}_V
    \norm{v_0}_V} \geq 1.
\end{equation}

Next, we proceed to establish the condition \eqref{eq:b_bb}. Let
$\ld_0\in M_0$, and $\ld\in M$. Corresponding to
$\tilde{\ld}:=\ld_0+\ld \in \Sint$, there exists a unique $\tilde{w}
\in H^{1}_{0} (\taud)$ satisfying   
\begin{equation}
  \label{eq:wform_lap}
  (\nabla \tilde{w},\nabla \mu) = (\tilde{\ld}, \mu) \ \mbox{for all}
  \ \mu \in H^{1}_{0} (\taud).   
\end{equation}
Note that $\nabla\tilde{w} \in\Sint^d$ and $\tilde{w}$ satisfies the
elliptic problem $-\Delta\tilde{w}=\tilde{\ld}$ in the sense of
distributions. Hence, $\dvg\nabla\tilde{w}=-\tilde{\ld}\in\Sint$. In
other words, $\nabla\tilde{w} \in \Hdiv$. Therefore, setting
$\mu=\tilde{w}$ in \eqref{eq:wform_lap} yields 
\begin{align}
    \norm{\nabla\tilde{w}}^{2}_{\Sint^d} &= (\tilde{\ld}, \tilde{w}) \nonumber\\
    &\leq\norm{\tilde{\ld}}_{\Sint}\norm{\tilde{w}}_{\Sint} \nonumber\\
    &\leq\norm{\tilde{\ld}}_{\Sint}\norm{\tilde{w}}_{H^{1}(\taud)} \nonumber\\
    &\leq C \norm{\tilde{\ld}}_{\Sint}\abs{\tilde{w}}_{H^1(\taud)} \nonumber\\
    &=C \norm{\tilde{\ld}}_{\Sint}\norm{\nabla\tilde{w}}_{\Sint^d}. \label{eq:gradw2}
\end{align}
Here, we have used the Poincar{\'e} inequality in the last but one
step. Therefore, we have
\begin{equation}
  \label{eq:gradw_norm}
  \norm{\nabla\tilde{w}}_{\Sint^d}\leq C \norm{\tilde{\ld}}_{\Sint}.
\end{equation}
Further, using $-\Delta\tilde{w}=\tilde{\ld}$, we get
$\norm{\Delta\tilde{w}}_{\Sint}=\norm{\tilde{\ld}}_{\Sint}$. Combining
the above two we obtain
\begin{align}
  \norm{\nabla\tilde{w}}_{\Hdiv}^2&=\norm{\nabla\tilde{w}}^2_{\Sint^d}
                                    +\norm{\Delta\tilde{w}}_{\Sint}^2 \nonumber\\
                                  &\leq (C^2+1)
                                    \norm{\tilde{\ld}}^2. \label{eq:wHdivNorm} 
\end{align}
Since $\ld_0\in M_0$, we must have $b(v,\ld_0)=0$ for all $v\in
V$. Thus, for $0 \neq \tilde{\ld} \in M$,  
\begin{align}
  \sup_{v \in V} \frac{\abs{b_k(v,\ld)}}{\norm{v}_V}&=\sup_{v \in V}
                                                    \frac{\abs{b_k(v,\tilde{\ld})}}{\norm{v}_V}
                                                      \nonumber \\
                                                    &\geq\frac{\abs{b_k(\nabla\tilde{w},\tilde{\ld})}}
                                                      {\norm{\nabla\tilde{w}}_V}
                                                      \nonumber\\
                                                    &= \Dlt \abs{a_{k,k}}\frac{\ba}{\veps}
                                                      \frac{\abs{(\Delta\tilde{w},
                                                      \tilde{\ld})}}{\norm{\nabla\tilde{w}}_V}
                                                      \nonumber\\ 
                                                    & \geq  \Dlt \abs{a_{k,k}}\frac{\ba}{\veps}
                                                      \frac{\norm{\tilde{\ld}}^2_M}{\sqrt{(C^2
                                                      +1)}\norm{\tilde{\ld}}_M}
                                                      \nonumber\\
                                                    &=\frac{\Dlt\abs{a_{k,k}}}{\sqrt{(C^2+1)}}\frac{\ba}{\veps}\norm{\tilde{\ld}}_M\nonumber\\ 
                                                    &=\frac{\Dlt\abs{a_{k,k}}}{\sqrt{(C^2+1)}}\frac{\ba}{\veps}\norm{\ld_0+\ld}_M. \label{eq:b_ineq}
\end{align}
The inf-sup condition \eqref{eq:b_bb} now follows from
\eqref{eq:b_ineq} by taking the infimum over $\ld_0\in M_0$.

Hence, it follows from Theorem~\ref{thm:bb} that the $k^{th}$ stage
\eqref{eq:imp_wform_u_k}-\eqref{eq:imp_wform_rho_k}  of the weak
formulation admits a unique solution $(u^k,\vrho^k)\in V\times M$. 
\end{proof}

\subsection{Asymptotic Preserving Property}
\label{sec:asympt-preserv-prop}

The goal of this section is to prove the AP property of the scheme
\eqref{eq:imp_wform_u_k}-\eqref{eq:imp_wform_rho_n+1}. As mentioned
before, proving the AP property consists of proving the asymptotic
stability and asymptotic consistency.   
\begin{theorem} 
  \label{thm:stab_ap}
  Consider the semi-discrete scheme
  \eqref{eq:imp_wform_u_k}-\eqref{eq:imp_wform_rho_n+1}, and assume
  the conditions of Theorem~\ref{thm:time_disc_existence}.
  \begin{enumerate}
  \item \label{item1:en}
    Then there exists a constant $C_k>0$, such that the numerical
    solution $(u^k,\vrho^k)\in V\times M$ of the $k^{th}$ stage
    \eqref{eq:imp_wform_u_k}-\eqref{eq:imp_wform_rho_k} satisfies the
    energy stability estimate:  
    \begin{equation}
      \label{eq:Ek_stab}
      E^k \leq C_k E^n,
    \end{equation}
    where $C_k$ depends only on the IMEX-RK coefficients, but is
    independent of $\veps$. Consequently, there exists a constant
    $C>0$, such that the numerical solution $(u^{n+1},\vrho^{n+1})$
    satisfies the estimate:   
    \begin{equation}
      \label{eq:En+1_stab}
      E^{n+1} \leq C E^n, 
    \end{equation}
    where $C$ depends only on the matrix $A$ and the vector $\omega$,
    but is independent of $\veps$. In other words, the time
    semi-discrete scheme
    \eqref{eq:imp_wform_u_k}-\eqref{eq:imp_wform_rho_n+1} is stable in
    the $L^2$-norm.    
  \item \label{item2:ap}
    If we assume that the solution $(\vrho^{n}, u^{n})$ at time
    $t^n$ is well-prepared i.e. it admits the decomposition:
    \begin{equation}
      \label{eq:tn_mm}
        \vrho^n=\vrho^n_{(0)}+\veps\vrho_{(1)}^n,  \ 
        u^n=u^n_{(0)}+\veps u_{(1)}^n,
    \end{equation}
    where $(\vrho_{(0)}^n, u_{(0)}^n)\in\mcal{E}$, then, the numerical
    solution $(\vrho^{n+1}, u^{n+1})$ also admits a similar
    decomposition
    \begin{equation}
      \label{eq:tn+_mm}
        \vrho^{n+1}=\vrho^{n+1}_{(0)}+\veps\vrho_{(1)}^{n+1},  \ 
        u^{n+1}=u^{n+1}_{(0)}+\veps u_{(1)}^{n+1},
      \end{equation}
      with $(\vrho_{(0)}^{n+1}, u_{(0)}^{n+1})\in\mcal{E}$, which shows
      consistency with the asymptotic limit as $\veps\to 0$.  
  \end{enumerate}
  Hence, the scheme
  \eqref{eq:imp_wform_u_k}-\eqref{eq:imp_wform_rho_n+1} is asymptotic
  preserving.   
\end{theorem}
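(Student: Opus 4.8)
The plan is to establish the two assertions in turn, using the $L^2$\nobreakdash-stability estimate \eqref{eq:Ek_stab}--\eqref{eq:En+1_stab} as the main engine and then feeding it into the asymptotic\nobreakdash-consistency claim \eqref{eq:tn+_mm}. The starting point for \eqref{eq:Ek_stab} is a discrete energy identity at each stage. Testing \eqref{eq:imp_wform_u_k} with $v=u^k$ and \eqref{eq:imp_wform_rho_k} with $\ld=\vrho^k$ and subtracting, the two diagonal contributions $-\Dlt a_{k,k}\frac{\ba}{\veps}(\dvg u^k,\vrho^k)$ cancel identically, leaving
\begin{equation*}
  2E^k = (U^n,U^k) - \frac{\Dlt}{\veps}\sum_{\ell=1}^{k-1} a_{k,\ell}\,(L(U^\ell),U^k),
\end{equation*}
where I have used the identity $(L(U^\ell),U^k)=\ba\bigl[(\dvg u^\ell,\vrho^k)-(\dvg u^k,\vrho^\ell)\bigr]$, obtained by the same integration by parts on $\taud$ that underlies Proposition~\ref{energyconserv}. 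The analogous computation with the weights gives $2E^{n+1}=(U^n,U^{n+1})-\frac{\Dlt}{\veps}\,\omega_k(L(U^k),U^{n+1})$. The genuine difficulty, and the main obstacle of the whole proof, is that these cross terms carry an explicit $1/\veps$, so a direct Cauchy--Schwarz bound is not uniform in $\veps$; this is precisely where Hypothesis~\ref{hyp:H2} must enter.

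To remove the $1/\veps$ I would rewrite the stiff stage increments using the invertibility of the implicit Butcher matrix. The stage relations read $\sum_{\ell} a_{k,\ell}\,z^\ell = U^n-U^k$ with $z^\ell:=\frac{\Dlt}{\veps}L(U^\ell)$, so for a type\nobreakdash-A scheme $z^\ell=\sum_{m}(A^{-1})_{\ell m}(U^n-U^m)$, and since $A$, hence $A^{-1}$, is lower triangular the sum runs only over $m\le\ell$; for a type\nobreakdash-CK scheme the first stage is trivial and the same identity is applied to the invertible block $A_{s-1}$. Substituting this $\veps$\nobreakdash-free representation into the energy identities turns every stiff cross term into a fixed linear combination of inner products $(U^n-U^m,U^k)$ whose coefficients depend only on $A$ and $\omega$. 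A Cauchy--Schwarz estimate together with an induction on $k$ (the base case being immediate, as the stiff sum is empty at the first invertible stage) using $\norm{U^m}\le\sqrt{C_m}\,\norm{U^n}$ for $m<k$ then yields $\norm{U^k}\le\sqrt{C_k}\,\norm{U^n}$, which is \eqref{eq:Ek_stab}; the same substitution in the $U^{n+1}$ identity gives \eqref{eq:En+1_stab}, all constants depending only on the tableau.

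For the consistency claim \eqref{eq:tn+_mm} I would exploit the range/kernel structure of $L$. Since $L(U)=(\ba\dvg u,\ba\nabla\vrho)$ has zero-mean first component and curl-free second component, its range lies in $\tilde{\mcal E}$ by \eqref{eq:LWES_E_comp_def}, i.e.\ $\mbb P L=0$. Applying $\mbb P$ to each stage and to the final update therefore annihilates all stiff terms, giving $\mbb P U^k=\mbb P U^n$ for every $k$ and, crucially, $\mbb P U^{n+1}=\mbb P U^n$: the incompressible component is transported exactly. The acoustic components $\tilde U^k,\tilde U^{n+1}$ satisfy the very same scheme, so the estimate just proved applies verbatim to them and gives $\norm{\tilde U^{n+1}}\le C\,\norm{\tilde U^n}$ with $C$ independent of $\veps$.

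Finally I would translate this into the stated decomposition. The hypothesis $(\vrho^n_{(0)},u^n_{(0)})\in\mcal E$ means $\tilde U^n=(\mbb I-\mbb P)U^n=\veps(\mbb I-\mbb P)U^n_{(1)}=\mcal O(\veps)$, whence $\norm{\tilde U^{n+1}}=\mcal O(\veps)$. Writing $U^{n+1}=\mbb P U^{n+1}+\tilde U^{n+1}=\mbb P U^n+\tilde U^{n+1}$ and setting $U^{n+1}_{(0)}:=U^n_{(0)}\in\mcal E$ exhibits \eqref{eq:tn+_mm} with an $\mcal O(\veps)$ remainder; in the limit $\veps\to0$ the leading-order state stays in $\mcal E$, which is exactly asymptotic consistency with the incompressible limit. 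Together with the $\veps$-uniform stability of \eqref{eq:En+1_stab} this establishes the AP property. I expect the $1/\veps$ cancellation through $A^{-1}$ to be the only non-routine ingredient; everything else is bookkeeping with Cauchy--Schwarz and the orthogonal decomposition \eqref{eq:LWES_HelmLer}.
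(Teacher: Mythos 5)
Your treatment of the stability estimate \eqref{eq:Ek_stab} is essentially the paper's own argument in matrix form: the paper removes the $1/\veps$ cross terms by a stage-wise forward elimination (testing the earlier stages with scaled copies of $u^k$ and $\vrho^k$, e.g.\ $v=-\tfrac{a_{21}}{a_{11}}u^2$), and that elimination is exactly the triangular solve your substitution $z^\ell=\sum_m (A^{-1})_{\ell m}(U^n-U^m)$ carries out in one stroke; both versions then close with Cauchy--Schwarz and induction, and both lean on Hypothesis~\ref{hyp:H2} at the same point. One shared caveat: for a type-CK tableau with $\alpha\neq 0$ the first stage gives $U^1=U^n$, so $z^1=\tfrac{\Dlt}{\veps}L(U^n)$ is not reachable through $A_{s-1}^{-1}$ and re-enters the energy identity with an explicit $1/\veps$; your phrase ``the first stage is trivial'' glosses over this exactly as the paper's ``the elimination process is valid under Hypothesis~\ref{hyp:H2}'' does, so it is not a defect of your proof relative to theirs (and it is vacuous for ARS(2,2,2), whose first implicit column vanishes). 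For part (2) the paper gives no proof at all, deferring to \cite{ArunSamArxiv}; your projection argument --- the range of $L$ lies in $\tilde{\mcal{E}}$ so $\mbb{P}L=0$, hence $\mbb{P}U^{n+1}=\mbb{P}U^n$, while the acoustic component obeys the same scheme and inherits the $\veps$-uniform bound, yielding \eqref{eq:tn+_mm} with $U^{n+1}_{(0)}\in\mcal{E}$ --- is correct and is a genuine, self-contained addition that also delivers the $\mcal{E}$-invariance of Theorem~\ref{thm:E_invariance} as a by-product.
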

\begin{proof}
  We prove only the statement in (\ref{item1:en}), and the statement
  (\ref{item2:ap}) follows as in \cite{ArunSamArxiv}. In order to
  prove (\ref{item1:en}), we proceed as follows. Considering the
  first stage, i.e.\ for $k=1$, we have the fully implicit update:
  \begin{align}
    (u^{1},v)-\Dlt a_{1,1}\frac{\ba}{\veps}(\vrho^{1},\dvg v)
    &=(u^{n},v), \ \mbox{for all} \ v\in V, \label{eq:u1} \\
    -\Dlt a_{1,1}\frac{\ba}{\veps}(\dvg
    u^{1},\ld)-(\vrho^{1},\ld)&=-(\vrho^{n},\ld), \ \mbox{for all}  \
                                \ld\in M.\label{eq:rho1}
  \end{align}
  In the above, taking $v=u^1$ and $\ld=-\vrho^1$, adding the
  resulting equations gives
  \begin{equation}
    \label{eq:E1_step1}
    (u^1,u^1)+(\vrho^1,\vrho^1)=(u^n,u^1)+(\vrho^n,\vrho^1).
  \end{equation}
A successive application of the Cauchy-Schwarz inequality on the right
hand side, and rearranging the terms yields 
  \begin{equation}
    \label{eq:E1_est}
    E^{1} \leq E^n. 
  \end{equation}
  In order to get the estimate for the second stage, i.e.\ for $k = 2$, let
  us consider 
\begin{align}
    (u^{2},v)-\Dlt a_{2,2}\frac{\ba}{\veps}(\vrho^{2},\dvg v)
    &=(u^{n},v)+\Dlt a_{2,1}\frac{\ba}{\veps}(\vrho^{1},\dvg v), \
      \mbox{for all} \ v\in V, \label{eq:u2} \\ 
    -\Dlt a_{2,2}\frac{\ba}{\veps}(\dvg
    u^{2},\ld)-(\vrho^{2},\ld)&=-(\vrho^{n},\ld)+\Dlt
                                a_{2,1}\frac{\ba}{\veps}(\dvg u^{1},\ld), \ \mbox{for all}  \
                                \ld\in M.\label{eq:rho2}
\end{align}
In \eqref{eq:u1}-\eqref{eq:rho1} we set $v=-\frac{a_{21}}{a_{11}}u^2,
\ \ld=\frac{a_{21}}{a_{11}}\vrho^2$, in \eqref{eq:u2}-\eqref{eq:rho2}
we set $v=u^2, \ \ld=-\vrho^2$, and add all the resulting equations to get 
\begin{equation}
  \label{eq:E2_step1}
  (u^2,u^2)+(\vrho^2,\vrho^2)=\left(1-\frac{a_{21}}{a_{11}}\right)\left\{(u^n,u^2)+(\vrho^n,\vrho^2)\right\}+\frac{a_{21}}{a_{11}}\left\{(u^1,u^2)+(\vrho^1,\vrho^2)\right\}.   
\end{equation}
Proceeding similarly as in the case of $k=1$, we can obtain from
\eqref{eq:E2_step1} 
\begin{equation}
  \label{eq:E2_stab}
  E^2\leq C_2 E^n.
\end{equation}
Note that the above procedure is similar to the usual forward
elimination process: in the $k^{th}$ stage, we let $v=u^k$,
$\ld=-\vrho^k$, and eliminate the terms containing $(\vrho^\ell,\dvg
v)$ and $(\dvg u^\ell,\ld)$ for $\ell = 1, 2, \ldots , k-1$ by choosing
the test functions $v$ and $\ld$ appropriately in each of the $k-1$
previous stages. The elimination process is valid under the
Hypothesis~\ref{hyp:H2}. Hence, we have for $k = 1, 2, \ldots, s$ 
\begin{equation}
  E^{k} \leq C_k E^{n}
\end{equation}
for an appropriate constant $C_k>0$ which depends only on the
coefficients of the matrix $A$.

An analogous procedure using the update formulae
\eqref{eq:imp_wform_u_n+1}-\eqref{eq:imp_wform_rho_n+1} finally yields
the stability estimate:
\begin{equation}
  \label{eq:E_n+1_E_n}
  E^{n+1}\leq C E^n,
\end{equation}
where $C>0$ depends only on $A$ and $\omega$, and is independent of $\veps$. 
\end{proof}

\subsection{Asymptotic Accuracy}
\label{sec:asymptotic-accuracy}

The asymptotic accuracy follows under the sufficient condition of
$\mcal{E}$-invariance of the scheme. Since the proof follows similar
lines as that \cite{ArunSamArxiv}, we omit the details here.  
\begin{theorem}
  \label{thm:E_invariance}
  The semi-discrete scheme
  \eqref{eq:imp_wform_u_k}-\eqref{eq:imp_wform_rho_n+1} leaves the
  well-prepared space $\mcal{E}$ invariant, i.e.\ if the data
  $(\vrho^n, u^n)$ at time $t^n$ is in $\mcal{E}$, then
  $(\vrho^{n+1}, u^{n+1})\in \mcal{E}$. As a consequence, the
  semi-discrete scheme is asymptotically accurate. 
\end{theorem}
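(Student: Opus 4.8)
The plan is to establish $\mcal{E}$-invariance one stage at a time, leaning on the uniqueness of the stage solutions furnished by Theorem~\ref{thm:time_disc_existence} together with the fact that the acoustic operator annihilates $\mcal{E}$. Recall from \eqref{eq:LWES_E_def} that $(\vrho,u)\in\mcal{E}$ exactly when $\nabla\vrho=0$ (so $\vrho$ is spatially constant) and $\dvg u=0$. Two elementary identities drive the entire argument, and I would record them first: if $\vrho$ is constant then $(\dvg v,\vrho)=\vrho\int_{\mbb{T}^d}\dvg v\,\dd x=0$ for every $v\in V$, since the torus has no boundary; and if $\dvg u=0$ then $(\dvg u,\ld)=0$ for every $\ld\in M$. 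These are precisely the weak incarnations of $L$ vanishing on $\mcal{E}$.

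The core is an induction on the stage index $k$. Assume $(\vrho^n,u^n)\in\mcal{E}$ and, as inductive hypothesis, that $(u^\ell,\vrho^\ell)\in\mcal{E}$ for all $\ell<k$. In the right-hand sides of \eqref{eq:imp_wform_u_k}-\eqref{eq:imp_wform_rho_k} every correction term $(\dvg v,\vrho^\ell)$ then vanishes because $\vrho^\ell$ is constant, and every term $(\dvg u^\ell,\ld)$ vanishes because $u^\ell$ is divergence-free. Hence the $k$-th stage collapses to the data-reproducing system $(u^k,v)-\Dlt a_{k,k}\frac{\ba}{\veps}(\dvg v,\vrho^k)=(u^n,v)$ and $-\Dlt a_{k,k}\frac{\ba}{\veps}(\dvg u^k,\ld)-(\vrho^k,\ld)=-(\vrho^n,\ld)$. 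I would then verify by direct substitution that the candidate $(u^k,\vrho^k)=(u^n,\vrho^n)$ solves both equations, the coupling terms dropping out by the same two identities applied to $\vrho^n$ and $u^n$. Since Theorem~\ref{thm:time_disc_existence} guarantees uniqueness under Hypothesis~\ref{hyp:H2}, this candidate \emph{is} the stage solution, so $(u^k,\vrho^k)=(u^n,\vrho^n)\in\mcal{E}$, closing the induction.

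Feeding the stage values into the update formulas \eqref{eq:imp_wform_u_n+1}-\eqref{eq:imp_wform_rho_n+1} finishes the proof. Because each $\vrho^k$ is constant, the term $(\dvg v,\vrho^k)$ vanishes and $(u^{n+1},v)=(u^n,v)$ for all $v\in V$; density of $V$ in $(\Sint)^d$ then yields $u^{n+1}=u^n$, whence $\dvg u^{n+1}=0$. Because each $\dvg u^k=0$, the term $(\dvg u^k,\ld)$ vanishes and $\vrho^{n+1}=\vrho^n$, whence $\nabla\vrho^{n+1}=0$. Therefore $(\vrho^{n+1},u^{n+1})\in\mcal{E}$, so by Definition~\ref{defn:AA} the scheme is asymptotically accurate; the quantitative payoff is the estimate \eqref{eq:ThmDelEst2}, obtained through Proposition~\ref{F_einv}\,(\ref{ThmDelCond2}).

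As for the main difficulty, the computation is essentially mechanical, so the real content is organizational rather than analytic. The crux is the uniqueness-plus-ansatz device: instead of actually solving the saddle-point systems, one guesses that the solution does not move and certifies it, which is legitimate only because Theorem~\ref{thm:time_disc_existence} supplies uniqueness. The two points demanding care are the correct handling of the periodic boundary conditions that force $\int_{\mbb{T}^d}\dvg v\,\dd x=0$, and the passage from the weak identity $(u^{n+1}-u^n,v)=0$ for all $v\in V$ to the pointwise equality $u^{n+1}=u^n$, which relies on the density of $V$ in $L^2$.
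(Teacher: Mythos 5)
Your proof is correct. Note, however, that the paper itself omits the argument for Theorem~\ref{thm:E_invariance}, deferring to \cite{ArunSamArxiv}; the standard proof there works at the level of the strong semi-discrete updates, applying $\dvg$ and $\nabla$ to each stage and checking that the acoustic source terms vanish on $\mcal{E}$ stage by stage. You instead argue at the level of the weak formulations \eqref{eq:imp_wform_u_k}--\eqref{eq:imp_wform_rho_n+1} via the uniqueness-plus-ansatz device, which is a genuinely different (and arguably cleaner) route: it stays in the functional framework of Theorem~\ref{thm:time_disc_existence}, needs only the two identities $\int_{\mbb{T}^d}\dvg v\,\dd x=0$ and $(\dvg u,\ld)=0$ on $\mcal{E}$, and as a by-product yields the stronger conclusion $(u^{n+1},\vrho^{n+1})=(u^n,\vrho^n)$ --- which is the exact behaviour here, since the analyzed subsystem contains only the operator $L$, and $L$ annihilates $\mcal{E}$. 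Two small points deserve a remark in a final write-up. First, for type-CK schemes the first stage has $a_{1,1}=0$, so the reduced stage system is not a saddle point problem and the inf-sup argument of Theorem~\ref{thm:time_disc_existence} (which uses $\abs{a_{k,k}}>0$) does not literally apply; but in that case the stage reads $(u^1,v)=(u^n,v)$, $(\vrho^1,\ld)=(\vrho^n,\ld)$ and uniqueness is immediate, so your induction still closes --- just say so explicitly rather than citing the theorem wholesale. Second, your passage from $(u^{n+1}-u^n,v)=0$ for all $v\in V$ to $u^{n+1}=u^n$ via density of $\Hdiv$ in $\Sint^d$ is right and worth keeping, since $u^{n+1}$ is a priori only an $L^2$ object.
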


\section{Analysis of Space-Time Fully-discrete Scheme }
\label{sec:SpaceTimeFullyDiscrete}
In this section we present a space-time fully-discrete scheme obtained
by a finite volume strategy, and its analysis. 
Let the given cartesian spatial domain $\Omega$ be discretised into
rectangular cells of length $\Delta x_1$ and $\Delta x_2$ in $x_1$ and
$x_2$ directions, respectively. For notational conveniences, we define
the spatial differential operators $\mu$ and $\delta$, e.g. 
\begin{equation}	\label{eq:weq_delta_mu_defn}
  \delta_{x_1}\omega_{i,j}:={\omega}_{i+\frac{1}{2},j}-{\omega}_{i-\frac{1}{2},j},
  \ \ \ \
  \mu_{x_1} \omega_{i,j} := \frac{\omega_{i+ \frac{1}{2}, j}+\omega_{i
      - \frac{1}{2}, j}}{2}, 
\end{equation} 
in the $x_1$-direction, with analogous definitions in the
$x_2$-direction.

In order to achieve second order accuracy in space, we follow a
MUSCL strategy. From the piecewise constant cell averages
$U^{n}_{i,j}$ of the unknown function $U$ at time $t^n$, we
reconstruct a piecewise linear interpolant. 
In order to carry out the analysis of the fully-discrete scheme as
done in Section~\ref{sec:TimeSemiDiscrete}, we only consider smooth
solutions, and hence, the discrete slopes in the linear recovery are
approximated using central differences without using any limiters.   
\subsection{Space-time Fully-discrete Scheme}

Applying a finite volume discretisation for the fluxes $F(U)$ and
$G(U)$, we obtain the following fully-discrete scheme corresponding to
\eqref{eq:weq_varrho_k_semi_disc}-\eqref{eq:weq_u_semi_disc_finupd}. 
\begin{definition}\label{fully_disc_schm}
The $k^{th}$ stage of an $s$-stage space-time fully-discrete IMEX-RK
scheme for the wave equation system \eqref{eq:LWESWA} is defined as 
\begin{equation}\label{eq:weq_uk_FD}
  U^{k}_{i,j} =
  U^{n}_{i,j}-\tilde{a}_{k,\ell}\ld_m\delta_{x_m}\mcal{F}_m(U^\ell)_{i,j}-a_{k,l}\ld_m\delta_{x_m}\mcal{G}_m(U^l)_{i,j}, \ \mbox{for each} \ k = 1,2, \ldots , s,
\end{equation}
and the final update is given by 
\begin{equation}
  \label{eq:weq_u+_FD}
  U^{n+1}_{i,j} =
  U^{n}_{i,j}-\tilde{\omega}_k\ld_m\delta_{x_m}\mcal{F}_m(U^k)_{i,j}-\omega_k\ld_m\delta_{x_m}\mcal{G}_m(U^k)_{i,j}.
\end{equation}
Here, the repeated index $m$ takes values in ${1,2}$, and $\lambda_m :=
\frac{\Dlt}{\Delta x_m}$ denote the mesh ratios. 
\end{definition}
In our computations, we use a simple Rusanov-type flux to approximate
the explicit part $F$, and a second-order central flux for the
implicit part $G$, e.g.\ in the $x_1$-direction
\begin{equation}
  \label{eq:FG_flux_defn}
  \begin{aligned}
    \mcal{F}_{1,i+\frac{1}{2},j}(U^\ell)&=\frac{1}{2}\left(F_1\left(U^{\ell,+}_{i+\frac{1}{2},j}\right)+F_1\left(U^{\ell,-}_{i+\frac{1}{2},j}\right)\right)-\frac{\bu_1}{2}\left(U^{\ell,+}_{i+\frac{1}{2},j}-U^{\ell,-}_{i+\frac{1}{2},j}\right),\\
    \mcal{G}_{1,i+\frac{1}{2},j}(U^\ell)&=\frac{1}{2}\left(G_1\left(U^\ell_{i+1,j}\right)+G_1\left(U^\ell_{i,j}\right)\right).
  \end{aligned}
\end{equation}
Here $U_{i+\frac{1}{2},j}^\pm$ denotes the right and left interpolated
states at a right hand vertical edge.

Finally, to maintain the stability, the timestep $\Dlt$ is computed
using the CFL condition
\begin{equation}
  \label{eq:CFL}
  \Dlt\max\left(\frac{\bu_1}{\Delta x_1},\frac{\bu_2}{\Delta x_2}\right)=\nu, 
\end{equation} 
where $\nu<1$ is the given CFL number. Note that the above condition
is the advective CFL condition, and is independent of $\veps$.  

\subsection{Solvability of the Space-time fully discrete scheme}
The aim of this subsection is to establish the existence of a unique
solution to the fully-discrete scheme introduced in
Definition~\ref{fully_disc_schm}; cf.\ also
Theorem~\ref{thm:time_disc_existence}. To this end, we use the theory 
of circulant matrices; see \cite{gray} for more details. In order to
make the exposition simple, we consider a one-dimensional scheme;
extension to two dimensions is straightforward.  
\begin{theorem}
  \label{thm:fully_disc_exist}
  Suppose a discrete numerical approximation $(\vrho_i^n,u_i^n)$ at
  time $t^n$, and a timestep $\Dlt$ are given. Then, under
  Hypothesis~\ref{hyp:H2}, each of the intermediate stages
  \eqref{eq:weq_uk_FD} admit a unique solution $(\vrho_i^k,u_i^k)$ for
  $k=1,2,\ldots,s$. As a consequence, the update step
  \eqref{eq:weq_u+_FD} admits a unique solution
  $(\vrho_i^{n+1},u_i^{n+1})$ at time $t^{n+1} = t^n + \Dlt$.  
\end{theorem}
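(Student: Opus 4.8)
The plan is to reduce the fully-discrete update equations \eqref{eq:weq_uk_FD} to a linear algebraic system and exploit the circulant structure that arises from periodic boundary conditions and central differencing. First I would write out the $k$-th stage explicitly in one space dimension, moving all terms involving the unknown $(\vrho^k, u^k)$ to the left-hand side and collecting the known quantities (the data $U^n$ and the previously computed stages $U^1, \ldots, U^{k-1}$) on the right. Because the implicit flux $\mcal{G}$ is discretised by the second-order central difference in \eqref{eq:FG_flux_defn}, the operator $\delta_{x_1}\mcal{G}_1$ acting on a grid vector becomes a skew-symmetric circulant difference operator; I would denote the associated circulant matrix by $\mathcal{D}$, so that the $k$-th stage takes the block form
\begin{equation*}
  \begin{pmatrix} I & a_{k,k}\ld_1\frac{\ba}{\veps}\mathcal{D} \\ a_{k,k}\ld_1\frac{\ba}{\veps}\mathcal{D} & I \end{pmatrix}
  \begin{pmatrix} u^k \\ \vrho^k \end{pmatrix}
  = \begin{pmatrix} \text{RHS}_u \\ \text{RHS}_\vrho \end{pmatrix},
\end{equation*}
where $\mathcal{D}$ is circulant and $I$ is the identity.

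The core of the argument is then to prove that this block coefficient matrix is invertible for every fixed $\veps > 0$. The key step is to diagonalise $\mathcal{D}$ using the fact that all circulant matrices of a given size are simultaneously diagonalised by the discrete Fourier matrix \cite{gray}. Under this transformation, the block system decouples into $N$ independent $2\times 2$ systems, one for each Fourier mode, whose coefficient matrices have the form $\begin{pmatrix} 1 & \eta_j \\ \eta_j & 1 \end{pmatrix}$ with $\eta_j = a_{k,k}\ld_1\frac{\ba}{\veps}\, d_j$, where $d_j$ is the $j$-th eigenvalue of $\mathcal{D}$. Since the central-difference circulant $\mathcal{D}$ is skew-symmetric, its eigenvalues $d_j$ are purely imaginary, so each $\eta_j$ is purely imaginary and the determinant $1 - \eta_j^2 = 1 + |\eta_j|^2 \geq 1$ never vanishes. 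This guarantees invertibility of every mode-wise block, hence of the full system, and solvability of the $k$-th stage under Hypothesis~\ref{hyp:H2} (which ensures $a_{k,k} \neq 0$ for the diagonally implicit stages, or reduces type-CK to the invertible subblock).

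I would then conclude by induction on $k$: each stage $k$ is uniquely solvable given the data and the earlier stages $U^1, \ldots, U^{k-1}$, so all $s$ stages are determined, and the final update \eqref{eq:weq_u+_FD} is purely explicit in these computed stages and therefore defines $(\vrho^{n+1}, u_i^{n+1})$ uniquely. The extension to two dimensions, as remarked in the text, proceeds by replacing $\mathcal{D}$ with a sum of one-dimensional difference operators in each coordinate direction; because circulant matrices arising from central differencing in distinct directions commute (they are block-circulant and simultaneously diagonalised by the two-dimensional discrete Fourier transform), the same mode-by-mode decoupling applies. I expect the main obstacle to be the bookkeeping in verifying skew-symmetry of $\mathcal{D}$ cleanly and in handling the type-CK case, where the first stage is explicit and one must restrict to the invertible subblock $A_{s-1}$; establishing that the relevant diagonal coefficient is nonzero so that the mode-wise determinant argument goes through is the delicate point, but it follows directly from Hypothesis~\ref{hyp:H2}.
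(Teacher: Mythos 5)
Your proposal follows essentially the same route as the paper: both reduce each stage to a block linear system built from the skew-symmetric central-difference circulant matrix and prove invertibility of that block matrix, then conclude by induction over the stages and note that the final update is explicit. The only difference is cosmetic --- the paper verifies $\det\left(\mds{1}-\beta_k^2P^2\right)\neq 0$ via a determinant identity for commuting blocks together with a Gershgorin/numerical-range argument showing $-P^2$ is positive semi-definite, whereas you diagonalise $P$ by the discrete Fourier matrix and check each $2\times 2$ mode block directly (the same device the paper itself uses in the subsequent stability proof), which is an equivalent way of exploiting the purely imaginary spectrum of the skew-symmetric circulant.
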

\begin{proof}
  The proof uses induction on $k$, the number of stages. Note that for any
  $k=1, 2,\ldots, s$, we can rewrite the time semi-discrete scheme
  \eqref{eq:weq_varrho_k_semi_disc}-\eqref{eq:weq_u_k_semi_disc} as
  \begin{equation}\label{eq:weq_FD_rk_uk}
  \begin{aligned}
    \vrho^{k} &= \hat{\vrho}^k - \Dlt \frac{\ba}{\veps} a_{k,k} \D_{x_1} u^k ,
    \\
    u^{k} &= \hat{u}^k - \Dlt \frac{\ba}{\veps} a_{k,k} \D_{x_1} \vrho^k,
  \end{aligned}
\end{equation}
where we have denoted the explicit terms 
\begin{equation}
\begin{aligned}
  \hat{\vrho}^k &= \vrho^n - \Dlt \frac{\ba}{\veps} a_{k,\ell} \D_{x_1} u^\ell ,
\\
\hat{u}^k &= u^n - \Dlt \frac{\ba}{\veps} a_{k,\ell} \D_{x_1} \vrho^\ell.
\end{aligned}
\end{equation}
In \eqref{eq:weq_FD_rk_uk}, when $k=1$, we have the fully implicit
first stage 
\begin{equation}\label{eq:weq_vrho_u_k1}
\begin{aligned}
\vrho_{i}^{1} &= \varrho^{n}_{i} - \Dlt \frac{\ba}{\veps} a_{1,1}
\frac{(u^1_{i+1} - u^1_{i-1})}{2 \Delta x_1}, \\
u_{i}^{1} &= u^{n}_{i} - \Dlt \frac{\ba}{\veps} a_{1,1}
\frac{(\vrho^1_{i+1} - \vrho^1_{i-1})}{2 \Delta x_1}, 
\end{aligned}
\end{equation}
for all $i = 1, 2,\ldots, N$, where $N$ denotes the number of mesh
points. Let us denote 
\begin{equation}
\begin{aligned}
  Z^k &= ( \vrho^k_1, \vrho^k_2, \cdots, \vrho^k_{N-1}, \vrho^k_N
  ), \ \  \mbox{for each} \  k = 1, 2,\ldots, s, \\
  V^k &= ( u^k_1, u^k_2, \cdots, u^k_{N-1}, u^k_N
  ), \ \  \mbox{for each} \  k = 1, 2,\ldots, s.
\end{aligned}
\end{equation}
Therefore, \eqref{eq:weq_vrho_u_k1} can be written as 
\begin{equation}\label{eq:weq_FD_1}
  \begin{pmatrix}
    Z^1 \\
    V^1
  \end{pmatrix} \  = 
  \begin{pmatrix}
    Z^n \\
    V^n
  \end{pmatrix} \  - \beta_1 \begin{pmatrix}
    O & P \\
    P & O
  \end{pmatrix} 
  \begin{pmatrix}
    Z^1 \\
    V^1
  \end{pmatrix}.
\end{equation}
where $\beta_1 := \frac{\Dlt}{2 \Delta x} \frac{\ba}{\veps} a_{1,1}$,
$P :=  \circu (0, 1, \cdots, -1)_N$ is an $N\times N$ circulant matrix
\cite{gray}, and $O$ is the $N \times N$ zero matrix. 
The equation \eqref{eq:weq_FD_1} then gives the linear system 
\begin{equation} \label{eq:weq_FD_K_Mform}
A^1(\veps)\begin{pmatrix}
\vrho^1\\
u^1
\end{pmatrix} = 
\begin{pmatrix}
\hat{\vrho}^1 \\
\hat{u}^1
\end{pmatrix},
\end{equation}
where the block-matrix $A^1(\veps)$ is given by
\begin{equation}\label{eq:j_eps_k}
A^1(\veps) : = \begin{pmatrix}
\mds{1} & \beta_1 P \\
\beta_1 P & \mds{1}
\end{pmatrix},
\end{equation}
with $\mds{1}$ being the $N\times N$ identity matrix. Since $\mds{1}$
and $P$ commute, the determinant of $A^1(\veps)$ is given by, see
\cite{bernstein-book},    
\begin{equation}\label{eq:weq_det_jeps}
\det(A^1(\veps)) = \det \left(\mds{1} - \beta_1^2 P^2\right).
\end{equation}
 As a consequence of the Greshgorin's circle theorem \cite{HJ_85}, it
 can be seen that the numerical range of $-\beta_1^2 P^2$ is
 nonnegative and that of $\mds{1}$ is positive; see \cite{HJ_91}. In
 fact, both these matrices are symmetric, and they both have strictly
 positive eigenvalues. Due to the sub-additivity of the numerical range,
 the eigenvalues of the matrix on the right hand side of
 \eqref{eq:weq_det_jeps} are then nonzero. Hence, $A^1(\veps)$ is
 invertible, which in turn confirms the existence and uniqueness of
 $(Z^1, V^1)$.  
 
Now, for each $k = 2, \cdots, s$, we have 
\begin{equation}\label{eq:weq_FD_k}
  \begin{pmatrix}
    Z^k \\
    V^k
  \end{pmatrix} \  = 
  \begin{pmatrix}
    \hat{Z}^k \\
    \hat{V}^k
  \end{pmatrix} \  - \beta_k \begin{pmatrix}
    O & P \\
    P & O
  \end{pmatrix} 
  \begin{pmatrix}
    Z^k \\
    V^k
  \end{pmatrix},
\end{equation}
where $\beta_k := \frac{\Dlt}{2 \Delta x} \frac{\ba}{\veps} a_{k,k}$
for $k=2,\ldots,s$. Note that $(\hat{Z}^k,\hat{V}^k)$ can be written
in terms of $(Z^\ell,V^\ell)$. As in the case of $k=1$, we can now
construct a block matrix $A^k(\veps)$ with $\beta_1$ replaced by
$\beta_k$ in \eqref{eq:j_eps_k} which can be shown to be
invertible. Hence, by induction, we prove the existence and uniqueness
of the solution $(Z^k, V^k)$. As a consequence, the existence and
uniqueness of $(Z^{n+1},V^{n+1})$ follows. 
\end{proof}
\subsection{Asymptotic Preserving Property.} 
\label{sec:asympt-preserv-prop_FD}
We prove the AP property of the fully-discrete scheme by showing
$l^2$-stability uniformly with respect to $\veps$, and its consistency
in the limit $\veps \to 0$. 
\begin{theorem}\label{thm:Asym_stab_FD}
Consider the fully-discrete scheme
\eqref{eq:weq_uk_FD}-\eqref{eq:FG_flux_defn}, and assume the
conditions of Theorem~\ref{thm:fully_disc_exist}
\begin{enumerate}
  \item \label{item1:en_FD}
  Then, there exists a constant $C_k> 0$ such that the numerical
  solution $(u^k_i, \vrho^k_i, )$ of the $k^{th}$ stage
  \eqref{eq:weq_uk_FD}  satisfies the energy stability estimate:
  \begin{equation}
    \label{eq:fully_disc_Ek}
    E^k \leq C_k E^n,
  \end{equation}
  where the constant $C_k>0$ is independent of $\veps$ and depends
  only on the IMEX-RK coefficients. Consequently, there exists a
  constant $C>0$ such that the numerical solution $(u^{n+1}_i, \vrho^{n+1}_i,
  )$ satisfies the estimate 
  \begin{equation}
    \label{eq:flly_disc_En+1}
    E^{n+1}\leq C E^n,
  \end{equation}
  where $C>0$ is independent of $\veps$, and depends only on the matrix
  $A$ and the vector $\omega$. In other words, the time
    fully-discrete scheme
    \eqref{eq:weq_uk_FD}-\eqref{eq:FG_flux_defn} is stable in
    the $l_2$-norm.   
 \item \label{item2:ap_FD}
If we assume that the solution $(\vrho^n_i, u^n_i)$ at time $t^n$ is
well-prepared i.e.\ it admits the decomposition:
\begin{equation}\label{eq:tn_mm_FD}
  \vrho^{n}_{i} = \vrho^{n}_{(0),i} + \veps \vrho^{n}_{(1),i} , \  
  u^{n}_{i} = u^{n}_{(0),i} + \veps
                u^{n}_{(1),i}, \ \mbox{ for
    all} \ i = 1, \ldots, N,
\end{equation}
where $\frac{\delta_{x_1}\mu_{x_1}}{\Delta x_1} \vrho^{n}_{(0),i} = 0$
and $ \frac{\delta_{x_1}\mu_{x_1}}{\Delta x_1} u^{n}_{(0),i} = 0$, or
in other words $(\vrho^{n}_{(0),i}, u^{n}_{(0),i})$ lives in 
$\mcal{E}$. Here, $\frac{\delta_{x_1} \mu_{x_1}}{\Delta x_1 }$ is the
  discrete derivative introduced by the implicit terms, i.e.\ by
  replacing the derivatives by central differences. Then then, the
  numerical solution $(\vrho^{n+1}_i, u^{n+1}_i)$ also admits the same
  decomposition   
\begin{equation}\label{eq:tn+_mm_FD}
  \vrho^{n+1}_{i} = \vrho^{n+1}_{(0),i} + \veps \vrho^{n+1}_{(1),i} ,\  
  u^{n+1}_{i} = u^{n+1}_{(0),i} + \veps u^{n+1}_{(1),i},  \ \mbox{ for
    all} \ i = 1, \ldots, N,
\end{equation}
i.e. the numerical solution is well-prepared and
$(\vrho^{n+1}_{(0),i}, u^{n+1}_{(0),i})$ lives in $\mcal{E}$, which
shows consistency with the asymptotic limit as $\veps \to 0$.
 \end{enumerate}
Hence, the scheme \eqref{eq:weq_uk_FD}-\eqref{eq:FG_flux_defn} is
asymptotic preserving.
\end{theorem}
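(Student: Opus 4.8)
The plan is to mirror the time semi-discrete argument of Theorem~\ref{thm:stab_ap}, transplanting it into the circulant-matrix setting already built in the proof of Theorem~\ref{thm:fully_disc_exist}. The linchpin for $\veps$-uniform stability is that the central-difference circulant $P=\circu(0,1,\ldots,-1)_N$ is \emph{skew-symmetric}, $P^\top=-P$ (its generating vector satisfies $c_j=-c_{N-j}$), so that the block acoustic operator $\left(\begin{smallmatrix} O & P\\ P & O\end{smallmatrix}\right)$ is skew-symmetric as well. Working with the reduced stage form \eqref{eq:weq_FD_rk_uk}, in which only the implicit acoustic coupling survives, the discrete $l^2$ identity $(Z,PV)+(V,PZ)=0$ holds and is the exact discrete counterpart of the integration-by-parts cancellation exploited in the semi-discrete proof. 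This is what forces the stiff factors $\beta_k\sim \ba/(\veps\,\Delta x_1)$ out of every energy balance.

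For part (\ref{item1:en_FD}) I would first take the discrete inner product of the fully implicit first stage \eqref{eq:weq_FD_1} with $(Z^1,V^1)$; the skew-symmetry identity annihilates the $\beta_1$ contribution, leaving $\norm{Z^1}^2+\norm{V^1}^2=(Z^n,Z^1)+(V^n,V^1)$, whence a Cauchy--Schwarz/Young estimate gives $E^1\le E^n$, exactly as in \eqref{eq:E1_est}. For the remaining stages I would carry out the forward-elimination of Theorem~\ref{thm:stab_ap}: test the $k$-th stage \eqref{eq:weq_FD_k} with $(Z^k,V^k)$ and each earlier stage with the appropriate multiple of $(Z^k,V^k)$ --- the multipliers being ratios of entries of the invertible lower-triangular implicit Butcher matrix guaranteed by Hypothesis~\ref{hyp:H2} --- so that the coupling terms cancel. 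Since every $\beta$-term is killed by skew-symmetry, Cauchy--Schwarz yields $E^k\le C_k E^n$ with $C_k$ assembled purely from the IMEX coefficients. The same manipulation applied to the update \eqref{eq:weq_u+_FD} produces $E^{n+1}\le C E^n$ with $C=C(A,\omega)$ independent of $\veps$, which is the asymptotic $l^2$-stability.

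For part (\ref{item2:ap_FD}) I would substitute the well-prepared ansatz \eqref{eq:tn_mm_FD} into the stage and update relations and match powers of $\veps$, following \cite{ArunSamArxiv}. The leading $\mcal{O}(\veps^{-1})$ balance forces the discrete constraints $\tfrac{\delta_{x_1}\mu_{x_1}}{\Delta x_1}u^k_{(0)}=0$ and $\tfrac{\delta_{x_1}\mu_{x_1}}{\Delta x_1}\vrho^k_{(0)}=0$, i.e.\ $(\vrho^k_{(0)},u^k_{(0)})\in\mcal{E}$, to be inherited at every stage; propagating this through \eqref{eq:weq_u+_FD} gives $(\vrho^{n+1}_{(0)},u^{n+1}_{(0)})\in\mcal{E}$, which is the asymptotic consistency. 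Combined with part (\ref{item1:en_FD}), this establishes the AP property.

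I expect the main obstacle to be the bookkeeping of the multi-stage forward elimination once the differential operators are replaced by circulant matrices: one must check that the test-multiplier choice which decouples the stages in the semi-discrete setting still closes into a finite, $\veps$-independent recursion for the discrete energies, and in the Type-CK case one must sidestep the singular first column of $A$ by running the elimination on the invertible sub-block $A_{s-1}$. The stability cancellation itself is robust because it rests solely on $P^\top=-P$; the delicate accounting is simply ensuring that the constants $C_k$ stay free of $\veps$ and of the mesh ratio, as claimed.
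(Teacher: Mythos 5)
Your proof is correct, but it takes a genuinely different route from the paper's for part (\ref{item1:en_FD}). You run a discrete energy method: the key observation that $P=\circu(0,1,\ldots,-1)_N$ satisfies $P^\top=-P$, so that the block acoustic matrix is skew-symmetric and the stiff $\beta_k$-terms drop out of every inner product $\bigl((Z^k,V^k),\,\cdot\,\bigr)$, after which the forward elimination of Theorem~\ref{thm:stab_ap} transplants verbatim. The paper instead argues through the solution operator: it writes $(Z^k,V^k)$ as an explicit product of matrices such as $(A^k(\veps))^{-1}$ and $(A^2(\veps))^{-1}M(\veps)(A^1(\veps))^{-1}$, simultaneously diagonalises all of them by the Fourier matrix $F_N$ (all blocks being circulant), and bounds the resulting diagonal resolvent blocks $(\mds{1}-\beta_k^2\Lambda_P^2)^{-1}$ and $\beta_k\Lambda_P(\mds{1}-\beta_k^2\Lambda_P^2)^{-1}$ uniformly in $\veps$ --- essentially a von Neumann analysis. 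Your approach is more elementary, makes the constant for the first stage exactly $1$, and would survive a loss of circulant structure (non-uniform grids, other boundary conditions), since it rests only on the summation-by-parts identity $P^\top=-P$; the paper's approach yields explicit per-mode amplification factors and meshes directly with the invertibility argument of Theorem~\ref{thm:fully_disc_exist}. One point to watch in your version: in the final update \eqref{eq:weq_u+_FD} the $\veps^{-1}$ coefficients multiply stage values rather than $(Z^{n+1},V^{n+1})$, so skew-symmetry alone does not kill them; you must first express the stiff stage increments $\beta_k P V^k$ through differences of stage values using the invertibility of $A$ (or of $A_{s-1}$ in the type-CK case) guaranteed by Hypothesis~\ref{hyp:H2} --- the same step the paper's semi-discrete proof also leaves implicit. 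Part (\ref{item2:ap_FD}) is handled identically in both, by matching powers of $\veps$ as in \cite{ArunSamArxiv}.
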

\begin{proof}
We prove only the statement in (\ref{item1:en_FD}), and the statement
  (\ref{item2:ap_FD}) follows as in \cite{ArunSamArxiv}.
  The proof of (\ref{item1:en_FD}) uses induction on $k$. For $k = 1$ we have from
  \eqref{eq:weq_FD_1}  
  \begin{equation} \label{eq:z1_v1_FD_m}
    \begin{pmatrix}
      Z^1 \\
      V^1
    \end{pmatrix} =
    (A^1(\veps))^{-1} \begin{pmatrix}
      Z^n \\
      V^n
    \end{pmatrix},
  \end{equation}
  where $A^1(\veps)$ is given by \eqref{eq:j_eps_k}. It has to be
  noted that any circulant matrix $M$ can be diagonalised as, see
  \cite{gray},   
  \begin{equation}
    \Lambda_M : =F_N^* M F_N, 
  \end{equation}
  where $*$ denotes the conjugate transpose, and the matrix $F_N$ is a
  unique unitary matrix consisting of eigenvectors of a circulant
  matrix of size $N$. Hence, $F_N$ is independent of the entries
  of $M$, and it is completely determined by the size $N$ of the
  matrix. The diagonalisation of the matrix $A^1(\veps)$ is given by   
  \begin{equation}
    \begin{aligned}
      \Lambda^1(\veps) &= \diag(F_N, F_N) A^1(\veps) \diag(F_N^*,
      F_N^*) \\
      & = \begin{pmatrix}
        \mds{1} & \beta_{1} \Lambda_P \\
        \beta_{1} \Lambda_P & \mds{1}
      \end{pmatrix},
    \end{aligned}
  \end{equation}
  where $\Lambda_P$ is the diagonal matrix consisting of the
  eigenvalues of the matrix $P$, cf.\ also proof of
  Theorem~\ref{thm:fully_disc_exist}. For any matrix norm,
  $\norm{(A^1(\veps))^{-1}}$ satisfies   
  \begin{equation} \label{eq:j_eps_inv_norm}
    \norm{(A^1(\veps))^{-1}} \leq \norm{\mbox{diag}(F_N, F_N)}
      \norm{\mbox{diag}(F_N^*, F_N^*)} \norm{(\Lambda^1(\veps))^{-1}}.
  \end{equation}
  The above inequality \eqref{eq:j_eps_inv_norm} implies that the
  dependence of the norm $\norm{(A^1(\veps))^{-1}}$ on $\veps$ is only
  through $\norm{(\Lambda(\veps))^{-1}}$. By Proposition 2.8.7 in
  \cite{bernstein-book}, the inverse of $\Lambda^1(\veps)$ is given by 
  \begin{equation}
    (\Lambda^1(\veps))^{-1} =
    \begin{pmatrix}
      (\mds{1} - \beta_1^2 \Lambda_P^2)^{-1} & - \beta_1 \Lambda_P
      (\mds{1} - \beta^2 \Lambda_P^2 )^{-1} \\ 
      \beta_1\Lambda_P (\mds{1} - \beta_1^2 \Lambda_P^2 )^{-1} &
      (\mds{1} - \beta_1^2 \Lambda_P^2)^{-1}  
    \end{pmatrix}.
  \end{equation}
  It can be seen that each block in the above matrix is bounded
  uniformly with respect to $\veps$, and hence
  $\norm{(\Lambda^1(\veps))^{-1}}$ also. Therefore,    
  \begin{equation}
    \norm{(A^1(\veps))^{-1}} \leq C_1,
  \end{equation}
  where $C_1>0$ is a constant independent of $\veps$. As a result, from
  \eqref{eq:z1_v1_FD_m}, we have the estimate 
  \begin{equation}
    E^1 \leq C_1 E^n,
  \end{equation}
  where $E:=\norm{(Z, V)}_{l_2}$ is the energy of the fully-discrete
  solution. For $k = 2$, the solution $(Z^2, V^2)$ is given by  
  \begin{equation}\label{eq:z2_v2_FD}
    \begin{aligned}
      \begin{pmatrix}
        Z^2\\
        V^2
      \end{pmatrix} &= 
      \begin{pmatrix}
        Z^n \\
        V^n
      \end{pmatrix} - \Dlt \frac{\ba}{\veps} \frac{a_{2,1}}{2 \Delta x} D
      \begin{pmatrix}
        Z^1 \\
        V^1
      \end{pmatrix}
      - \Dlt \frac{\ba}{\veps} \frac{a_{2,2}}{2 \Delta x} D
      \begin{pmatrix}
        Z^2 \\
        V^2
      \end{pmatrix}, \\
    \end{aligned}
  \end{equation}
  where $D$ is the $2N \times 2N$ central difference discretisation
  matrix, cf.\ \eqref{eq:weq_FD_1}. From \eqref{eq:z2_v2_FD}, we have
  \begin{equation}\label{eq:z2_v2_FD_m}
    A^2(\veps) \begin{pmatrix}
      Z^2 \\
      V^2
    \end{pmatrix} =  \begin{pmatrix}
      Z^n \\
      V^n
    \end{pmatrix} - M(\veps) \begin{pmatrix}
      Z^1 \\
      V^1
    \end{pmatrix}.
  \end{equation}
  In \eqref{eq:z2_v2_FD_m}, the matrix $M(\veps) = -\Dlt \frac{\ba}{\veps}
  \frac{a_{2,1}}{2 \Delta x} D$. Now using \eqref{eq:z1_v1_FD_m} in
  \eqref{eq:z2_v2_FD_m} yields
  \begin{equation}
    \begin{aligned}
      \begin{pmatrix}
        Z^2 \\
        V^2
      \end{pmatrix} &= (A^2(\veps))^{-1} \left( \begin{pmatrix}
          Z^n \\
          V^n
        \end{pmatrix} - M(\veps) (A^1(\veps))^{-1} \begin{pmatrix}
          Z^n \\
          V^n
        \end{pmatrix} \right), \\
      & =  \left( (A^2(\veps))^{-1} - (A^2(\veps))^{-1} M(\veps) (A^1(\veps))^{-1} \right)  \begin{pmatrix}
        Z^n \\
        V^n
      \end{pmatrix}.
    \end{aligned}
  \end{equation}
  As done in the case of $k=1$, it can be shown that the matrix
  $((A^2(\veps))^{-1} - (A^2(\veps))^{-1} M(\veps) (A^1(\veps))^{-1}
  )$ is uniformly bounded with respect to $\veps$. Hence, we have for
  any matrix norm, there exist a constant $C_2$, independent of $\veps$,
  such that  
  \begin{equation}
    \norm{\left( (A^2(\veps))^{-1} -
        (A^2(\veps))^{-1} M(\veps)^2 (A^1(\veps))^{-1} \right)} \leq C_2,
  \end{equation}
  which leads to the stability estimate
  \begin{equation}
    E^2 \leq C_2 E^n. 
  \end{equation}
  In this fashion, we can show that for each $k = 1, 2,\ldots, s$,
  there exist a constants $C_k>0$, independent of $\veps$, such that 
  \begin{equation}
    E^k \leq C_k E^n,   \  \mbox{for all} \ k = 1, 2,\ldots, s.
  \end{equation}
  Substituting the expressions for $(Z^k, V^k)$ in terms of
  $(Z^n,V^n)$ in the update stage for $(Z^{n+1}, V^{n+1})$, and
  estimating the the $l_2$ norm finally yields the stability bound 
  \begin{equation}
    E^{n+1} \leq C E^n,
  \end{equation}
  where  the constant $C$ is independent of $\veps$. 
\end{proof}
\begin{remark}
  It has to be noted that the above stability analysis presented in
  Theorem~\ref{thm:Asym_stab_FD} does not require any condition on
  $\Dlt$ and $\Delta x$. This is not surprising as we are dealing with
  a fully implicit scheme. Carrying out a similar analysis including
  the advection terms will enforce a CFL-like condition independent of
  $\veps$. In \cite{arun_das_sam, ArunSamArxiv}, we have presented the
  results of an analogous study for a first order accurate IMEX-RK
  scheme for the wave equation system with advection using the
  modified equation analysis; see also
  \cite{zakerzadeh,zakerzadeh-noelle} for related studies on the 
  shallow water model. 
\end{remark}
\subsection{Asymptotic Accuracy.}
As in the semi-discrete case, the asymptotic accuracy is a consequence
of the $\mcal{E}$-invariance.
\begin{theorem}
 Suppose that at time $t^n$ the numerical solution $(\vrho^n_{i},
 u^n_{i})$ is in  $\mcal{E}$, i.e.\ $\vrho^n_{i}=\const$
 and $\frac{\delta_{x_1}\mu_{x_1}}{\Delta x_1} u^n_{i}=0$ for all $i$. Then, at time
 $t^{n+1}$, the numerical  approximation $(\vrho^{n+1}_{i},
 u^{n+1}_{i})$ obtained from the  scheme
 \eqref{eq:weq_uk_FD}-\eqref{eq:FG_flux_defn} satisfy      
 \begin{equation}    \label{eq:weq_disc_E_inv1}
	\vrho^{n+1}_{i}=\const, \ \frac{\delta_{x_1}\mu_{x_1}
          }{\Delta x_1} u^{n+1}_{i}=0, \ \mbox{for all} \
        i.
      \end{equation}
In other words, the fully-discrete scheme
\eqref{eq:weq_uk_FD}-\eqref{eq:FG_flux_defn} keeps
the well-prepared space $\mcal{E}$ invariant. 
\end{theorem}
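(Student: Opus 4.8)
The plan is to imitate the semi-discrete argument of Theorem~\ref{thm:E_invariance} and to lean entirely on the uniqueness already secured in Theorem~\ref{thm:fully_disc_exist}, proceeding by induction on the stage index $k$. Working in one space dimension with the acoustic reformulation \eqref{eq:weq_FD_rk_uk} (consistent with the restriction to the IVP \eqref{eq:ivpALWES} used throughout the analysis), I would first record the one observation that makes everything go through: the discrete derivative appearing in the implicit coupling is precisely the central difference $\frac{(\cdot)_{i+1}-(\cdot)_{i-1}}{2\Delta x_1}=\frac{\delta_{x_1}\mu_{x_1}}{\Delta x_1}(\cdot)_i$, i.e.\ the very operator whose vanishing defines membership in $\mcal{E}$. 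Consequently, on $\mcal{E}$ the density update is driven by $\frac{\delta_{x_1}\mu_{x_1}}{\Delta x_1}u$ and the velocity update by $\frac{\delta_{x_1}\mu_{x_1}}{\Delta x_1}\vrho$, both of which vanish by the hypothesis $\vrho^n_i=\const$, $\frac{\delta_{x_1}\mu_{x_1}}{\Delta x_1}u^n_i=0$.

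For the base case I would examine the first stage \eqref{eq:weq_vrho_u_k1} and simply check that the pair $(\vrho^1_i,u^1_i)=(\vrho^n_i,u^n_i)$ is a solution: inserting $\vrho^1=\vrho^n=\const$ makes $\frac{\delta_{x_1}\mu_{x_1}}{\Delta x_1}\vrho^1=0$, so the velocity equation returns $u^1=u^n$; inserting $u^1=u^n$, which is discretely divergence-free, makes $\frac{\delta_{x_1}\mu_{x_1}}{\Delta x_1}u^1=0$, so the density equation returns $\vrho^1=\vrho^n$. Since Theorem~\ref{thm:fully_disc_exist} guarantees a \emph{unique} first-stage solution, this self-consistent pair \emph{is} that solution, and it lies in $\mcal{E}$. (For a type-CK scheme $a_{1,1}=0$ and the identity $(\vrho^1,u^1)=(\vrho^n,u^n)$ is immediate; Hypothesis~\ref{hyp:H2} ensures $a_{k,k}\neq0$ at the remaining stages, so the uniqueness invoked above applies there.)

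For the inductive step I would assume $(\vrho^\ell_i,u^\ell_i)\in\mcal{E}$ for all $\ell<k$ and inspect the explicitly accumulated quantities $\hat{\vrho}^k$ and $\hat{u}^k$ in \eqref{eq:weq_FD_rk_uk}. Because each $\vrho^\ell$ is constant and each $u^\ell$ is discretely divergence-free, every term $\frac{\delta_{x_1}\mu_{x_1}}{\Delta x_1}u^\ell$ and $\frac{\delta_{x_1}\mu_{x_1}}{\Delta x_1}\vrho^\ell$ in those sums vanishes, so that $\hat{\vrho}^k=\vrho^n=\const$ and $\hat{u}^k=u^n$. The $k$-th stage then collapses to exactly the implicit system solved in the base case, and uniqueness again forces $(\vrho^k_i,u^k_i)=(\vrho^n_i,u^n_i)\in\mcal{E}$. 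Finally, feeding all stage values $(\vrho^k,u^k)\in\mcal{E}$ into the update \eqref{eq:weq_u+_FD}, every discrete derivative on the right-hand side vanishes, whence $\vrho^{n+1}_i=\vrho^n_i=\const$ and $u^{n+1}_i=u^n_i$ remains discretely divergence-free, giving $(\vrho^{n+1}_i,u^{n+1}_i)\in\mcal{E}$, which is \eqref{eq:weq_disc_E_inv1}.

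I expect the only delicate point to be the bookkeeping that identifies the implicit central-difference operator in \eqref{eq:weq_FD_rk_uk} with $\frac{\delta_{x_1}\mu_{x_1}}{\Delta x_1}$ and that confirms, at each stage, that the explicitly accumulated terms genuinely reduce to the time-level-$n$ data; once this matching is established the conclusion is a direct consequence of the uniqueness proved in Theorem~\ref{thm:fully_disc_exist}, with no further estimates required. The passage to two dimensions is routine, the only change being that the discrete divergence $\sum_m\frac{\delta_{x_m}\mu_{x_m}}{\Delta x_m}u_m$ replaces its one-dimensional counterpart in the definition of $\mcal{E}$ and in the vanishing arguments above.
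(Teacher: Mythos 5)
Your argument is correct. The paper itself offers no proof of this theorem (it is stated without a proof environment, the semi-discrete analogue Theorem~\ref{thm:E_invariance} likewise deferring to \cite{ArunSamArxiv}), so there is nothing to compare line by line; but your strategy --- verify that $(\vrho^n_i,u^n_i)$ itself satisfies each implicit stage when the central-difference divergence of $u^n$ and the gradient of the constant $\vrho^n$ vanish, invoke the uniqueness of Theorem~\ref{thm:fully_disc_exist} (with the type-CK case $a_{1,1}=0$ handled separately) to conclude the stage solution \emph{is} that pair, and induct over the stages before passing to the final update --- is exactly the intended argument, and your identification $\frac{\delta_{x_1}\mu_{x_1}}{\Delta x_1}\omega_i=\frac{\omega_{i+1}-\omega_{i-1}}{2\Delta x_1}$ is the right bookkeeping. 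The one caveat worth flagging is that the theorem as stated formally references the scheme \eqref{eq:weq_uk_FD}--\eqref{eq:FG_flux_defn} including the explicit Rusanov flux $\mcal{F}_m$, under which $(\vrho^k,u^k)=(\vrho^n,u^n)$ would no longer hold verbatim; your explicit restriction to the acoustic system \eqref{eq:ivpALWES} is, however, consistent with how the paper conducts all of its fully-discrete analysis (Theorems~\ref{thm:fully_disc_exist} and~\ref{thm:Asym_stab_FD} likewise work only with \eqref{eq:weq_FD_rk_uk}), so this is a limitation inherited from the paper rather than a gap in your proof.
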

\section{Numerical Results and Their Analysis}
\label{sec:Numerics}
This section is aimed at presenting the results of numerical
computations performed using the proposed scheme. A detailed analysis
of the numerical results is carried out to support and validate the
theoretical findings. The analysis focuses to numerically corroborate
the following four key properties of the proposed scheme: 
\begin{enumerate}[(i)]
\item uniform second-order convergence with respect to $\veps$;
\item uniform stability with respect to $\veps$; 
\item asymptotic consistency;
\item invariance of the well-prepared space $\mcal{E}$, yielding
  asymptotic accuracy. 
\end{enumerate}
We consider four different test cases to establish each of the above
mentioned qualities of the proposed IMEX-RK finite volume
scheme. First, we consider a smooth data to demonstrate the uniform
second-order convergence by computing the experimental order of
convergence (EOC) for different values of $\veps$. Second,
computations are carried out using a two-dimensional moving vortex in
order to testify that the energy dissipation of the scheme is independent
of $\veps$ thereby establishing the uniform stability,
numerically. The third test-case is a two-dimensional well-prepared
data, aimed at demonstrating the asymptotic consistency. Lastly, we
consider a two-dimensional smooth periodic pulse in the well-prepared
space $\mcal{E}$ to demonstrate the asymptotic accuracy and the
asymptotic order of convergence (AOC); see also \cite{ArunSamArxiv}
for related numerical experiments and their results. We have used the
ARS(2,2,2) variant of the IMEX-RK scheme in all the test problems.  
\begin{remark}
  Our numerical computations are carried out using a reformulation of
  the semi-implicit scheme. First, an elliptic equation for the
  density is obtained by eliminating the velocity between the mass and
  the momentum updates. The linear system resulting from the elliptic
  equation is solved using the linear algebra sparse matrix solver
  UMFPACK. Finally, an explicit flux evaluation using the computed
  density in the momentum update yields the updated velocity.  
\end{remark}
\subsection{Experimental Order of Convergence}
\label{sec:problem-1}
We consider the following one-dimensional cosine wave data
\begin{equation}
\begin{aligned}
  \rho (0, x_1) &= 1 + \frac{\veps^2}{1.185} (1 + \cos (2 \pi \veps
  x_1)), \\
  u(0, x_1) & = \veps (1 + \cos (2 \pi \veps x_1)).
\end{aligned}
\end{equation}
The computational domain is $[-\frac{1}{\veps},\frac{1}{\veps} ]$, and
the boundaries are assumed to be periodic. The cosine wave train is
let to  complete three cycles in the domain with an advection velocity
$\bu=1$. The final time $T$ is chosen to be the time taken by the
wave to complete three cycles in domain, i.e.\ $T = 3 \times
\frac{2}{\bar{u} + 1/ \veps}$.  

The simulations are performed for different values of $\veps$ ranging
in $\{10^{0}, 10^{-1}, 10^{-2}, 10^{-3}\}$. As the computational domain
and the final time change with $\veps$, the EOC is obtained with
respect to the mesh size rather than to the number of mesh points. The
CFL number is fixed at $0.45$. The EOC is computed using 
$L^1$ and $L^2$ errors in $\rho$ and $u$ using the exact solution
of the problem as the reference solution. The
Tables~\ref{tab:eocepsm0}-\ref{tab:eocepsm3} clearly show that the
scheme achieves second order convergence uniformly with respect to
$\veps$.  
\begin{table}[htbp]
  \centering
  \begin{tabular}[htbp]{|c|c|c|c|c|c|c|c|c|}
    \hline
    $N$ &$\Delta x_1$ &$L^1$ error in $\rho$ & $L^1$ error in $u$& EOC & $L^2$
                                                               error
                                                               in
                                                               $\rho$
    & $L^2$ error in $u$ &EOC \\  
  \hline	

25 &  0.080000 & 1.533e-02 & 1.818e-02&
                                                                       &
                                                                         1.702e-02
    & 2.018e-02 & \\
\hline
50  & 0.040000 & 3.412e-03 & 4.048e-03 & 2.1677&
                                                                   3.788e-03
    & 4.494e-03 & 2.1676 \\
\hline
100 & 0.020000 & 8.214e-04 & 9.741e-04 &2.0549 & 9.122e-04 &
                                                                      1.081e-03
                         &2.0543\\
\hline
200 & 0.010000 & 2.035e-04 & 2.412e-04 & 2.0133 & 2.260e-04 &
                                                                      2.679e-04
                                                                      &2.0131\\
\hline
  \end{tabular}
  \caption{ $L^1$, $L^2$ errors in $\rho$, $u$, and  EOC for
    Problem~\ref{sec:problem-1} corresponding to $\veps=10^{0}$.}
  \label{tab:eocepsm0}
\end{table}
\begin{table}[htbp]
  \centering
  \begin{tabular}[htbp]{|c|c|c|c|c|c|c|c|c|}
    \hline
    $N$ & $\Delta x_1$&$L^1$ error in $\rho$ & $L^1$ error in $u$& EOC & $L^2$
                                                               error
                                                               in
                                                               $\rho$
    & $L^2$ error in $u$ &EOC \\  
  \hline
50 &  0.400000 & 1.778e-03 & 2.108e-02 & & 1.975e-03 &
                                                                      2.341e-02
   &	\\
\hline
100 & 0.200000 & 4.604e-03 & 5.457e-03 & 1.9498 & 5.113e-04 &
                                                                      6.061e-03
   & 1.9500\\
\hline
200 & 0.100000 & 1.159e-04 &  1.374e-03 & 1.9891 & 1.288e-04 &
                                                                       1.527e-03
                                                                       &1.9889
                                                                         \\
\hline
400 & 0.050000 & 2.907e-05 & 3.446e-04 & 1.9961& 3.229e-05 &
                                                                    3.827e-04
                         & 1.9961\\
\hline
  \end{tabular}
  \caption{ $L^1$, $L^2$ errors in $\rho$, $u$, and  EOC for
    Problem~\ref{sec:problem-1} corresponding to $\veps=10^{-1}$.}
  \label{tab:eocepsm1}
\end{table}
\begin{table}[htbp]
  \centering
  \begin{tabular}[htbp]{|c|c|c|c|c|c|c|c|c|}
    \hline
    $N$ & $\Delta x_1$ &$L^1$ error in $\rho$ & $L^1$ error in $u$& EOC & $L^2$
                                                            error
                                                               in
                                                            $\rho$
    & $L^2$ error in $u$ &EOC \\  
    \hline	
800 & 0.250000 & 6.954e-06 & 8.240e-04 & & 7.724e-06 &
                                                                      9.153e-04
                         & \\
\hline
1600 & 0.125000 & 1.771e-06 & 2.098e-04 & 1.9732 & 1.967e-06 &
                                                                      2.331e-04
                         &1.9732
 \\
\hline
3200 & 0.062500 & 4.443e-07 & 5.265e-05 &1.9950 & 4.935e-07 &
                                                                      5.848e-05
                         & 1.9950\\
\hline
6400 & 0.031250 & 1.113e-07 & 1.319e-05 &1.9961 &1.237e-07 &
                                                                      1.465e-05
                         &1.9961 \\
\hline
  \end{tabular}
  \caption{ $L^1$, $L^2$ errors in $\rho$, $u$, and  EOC for
    Problem~\ref{sec:problem-1} corresponding to $\veps=10^{-2}$.}
  \label{tab:eocepsm2}
\end{table}
\begin{table}[htbp]
  \centering
  \begin{tabular}[htbp]{|c|c|c|c|c|c|c|c|c|}
    \hline
    $N$ & $\Delta x_1$ &$L^1$ error in $\rho$ & $L^1$ error in $u$& EOC & $L^2$
                                                            error
                                                               in
                                                            $\rho$
    & $L^2$ error in $u$ &EOC \\  
    \hline	
3200 & 0.625000 & 3.903e-07 & 4.626e-04 & & 4.336e-07 &
                                                                       5.138e-04
                                                                       &
                                                                         \\
\hline
6400 & 0.312500 & 1.089e-07 & 1.291e-04 &1.8410 & 1.210e-07 &
                                                                       1.434e-04
                                                                       &1.8415
                                                                         \\
\hline
12800 & 0.156250 & 2.783e-08 & 3.298e-05 & 1.9692& 3.091e-08
    & 3.663e-05 & 1.9692\\
\hline
25600 & 0.078125 & 6.991e-09 & 8.284e-06 & 1.9932& 7.765e-09
    & 9.201e-06& 1.9932\\
   \hline
  \end{tabular}
  \caption{ $L^1$, $L^2$ errors in $\rho$, $u$, and  EOC for
    Problem~\ref{sec:problem-1} corresponding to $\veps=10^{-3}$.}
  \label{tab:eocepsm3}
\end{table}

\subsection{Travelling Vortex}
We formulate a travelling vortex problem as follows 
\begin{equation}
\begin{aligned}
\rho(0,x) &= 1.0, \\
u_1 (0, x) & = - K(r) \sin \theta ,\\
u_2 (0, x) & = K(r) \cos \theta.
\end{aligned}
\end{equation}
Here $r = \sqrt{(x_1-0.5)^2+(x_2-0.5)^2}$, $\theta = \tan^{-1} \left(
  \frac{x_2 - 0.5}{x_1 - 0.5} \right)$, and the radial function $K$ is
defined as 
\begin{equation}
K(r) = \begin{cases}
  5 r, & \text{if} \  r  < 0.2, \\
  2 - 5 r, &\text{if} \ 0.2  \leq  r  < 0.4, \\
  0, & \text{otherwise}. 
\end{cases}
\end{equation}
The vortex is set to move in the domain $[0, 4] \times [0, 1]$ by
prescribing an advection velocity $(\bu_1,\bu_2)=(1,0)$.  The CFL
number used is 0.45, and the boundaries are periodic in both the
directions. The computations are carried out for times $T \in \{ 1, 2
,3\}$ with $\veps$ ranging in $\{10^0, 10^{-1}, 10^{-2},
10^{-3}\}$. In Figure~\ref{fig:mach} we provide the Mach number plots
for the entire range of $\veps$ mentioned before and for each time $T$
from the time-range. For reference, we also plot the 
initial Mach number profile. First, it can be observed from the Mach
number plots that the shape of the vortex doesn't deform,
almost. Second, we can note that the shape of the Mach number profile
can be visually seen to be independent of $\veps$. Hence, it can be
concluded that the numerical dissipation stays independent of
$\veps$. This is further confirmed by the the kinetic energy decay
plot in Figure~\ref{fig:ke_vort} in which we plot the kinetic energy
versus time $T\in [0,3]$ for different values $\veps$ mentioned
above. It can be noted that the decay of kinetic energy is 
almost negligible and the energy decay stays independent of $\veps$, as the
plots corresponding to different values of $\veps$ overlap completely . 
\begin{figure}[htbp]
  \centering
  \includegraphics[width=18cm,scale=0.5, trim={0 14cm 0 5cm}, clip]{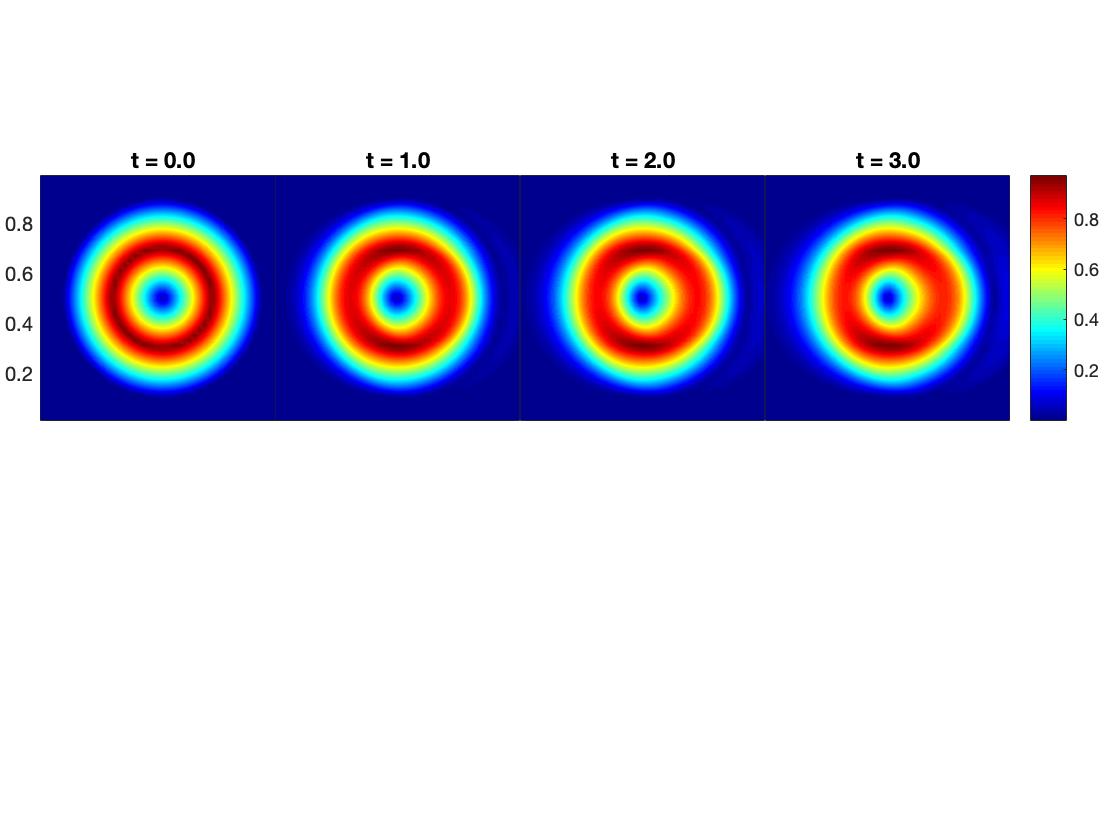}
  \includegraphics[width=18cm,scale=0.5, trim={0 14cm 0 5cm}, clip]{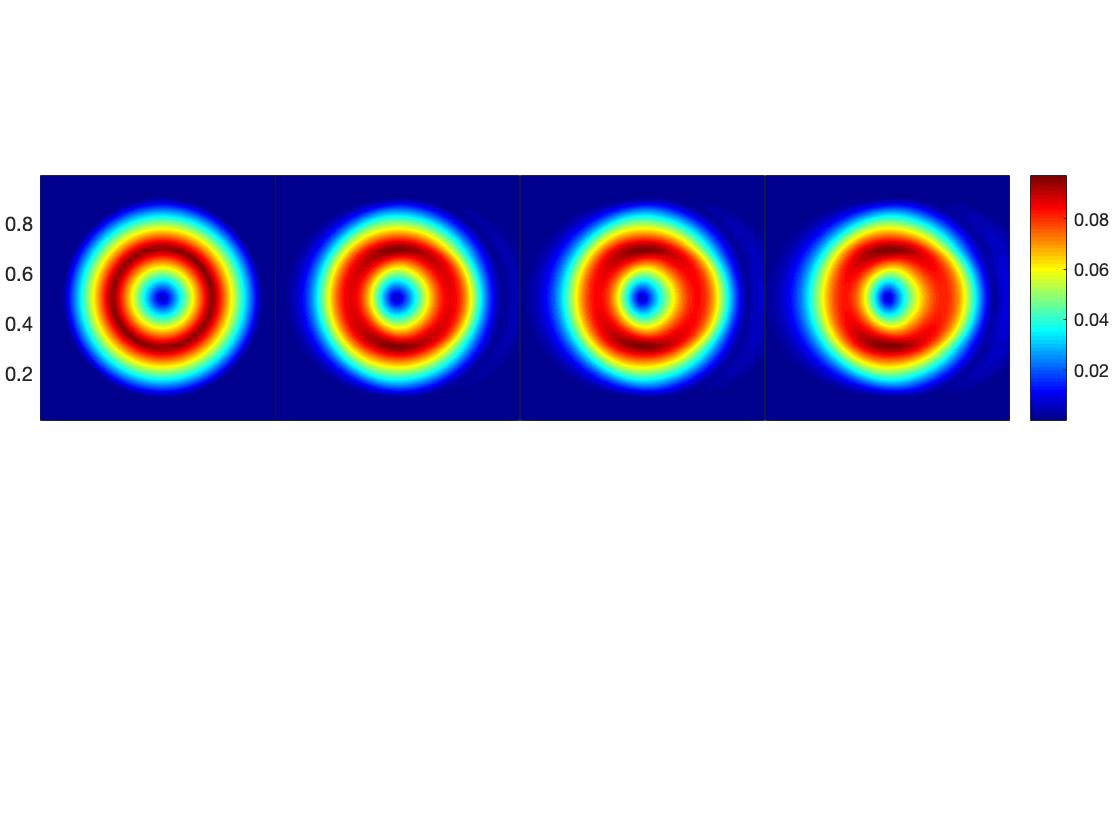}
  \includegraphics[width=18cm,scale=0.5, trim={0 14cm 0 5.2cm}, clip]{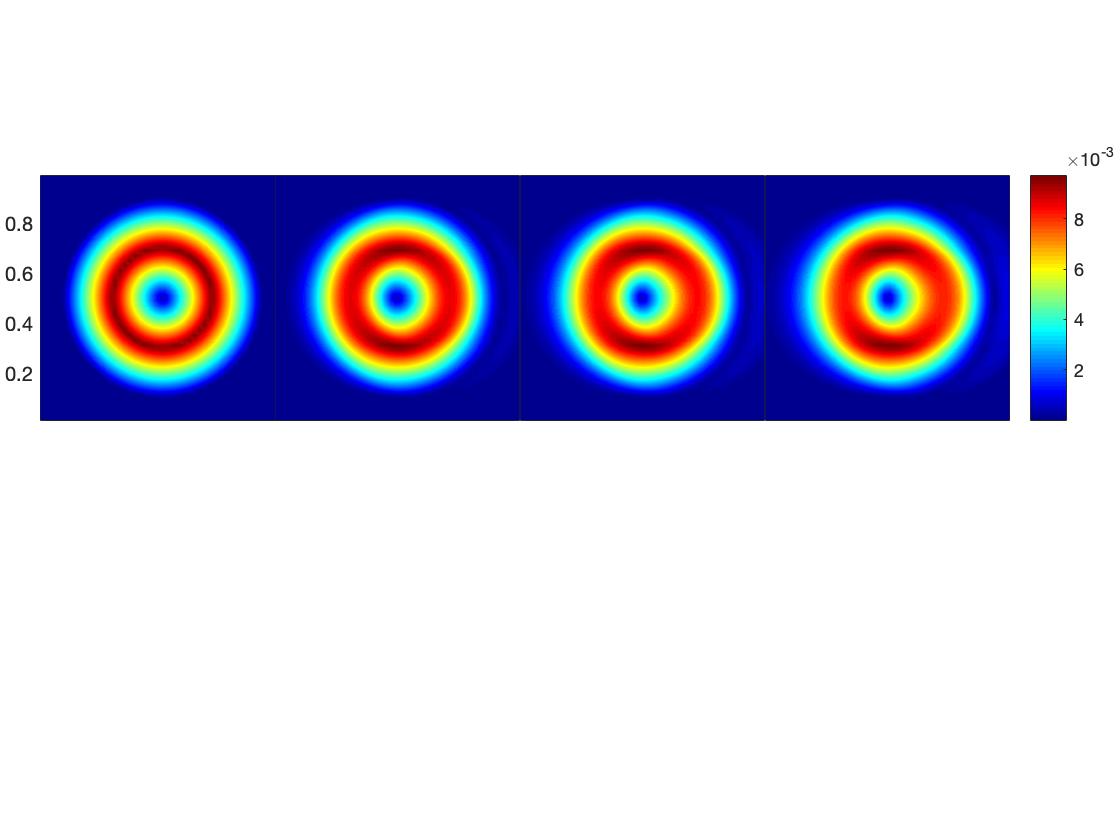}
  \includegraphics[width=18cm,scale=0.5, trim={0 13cm 0 5.2cm},
  clip]{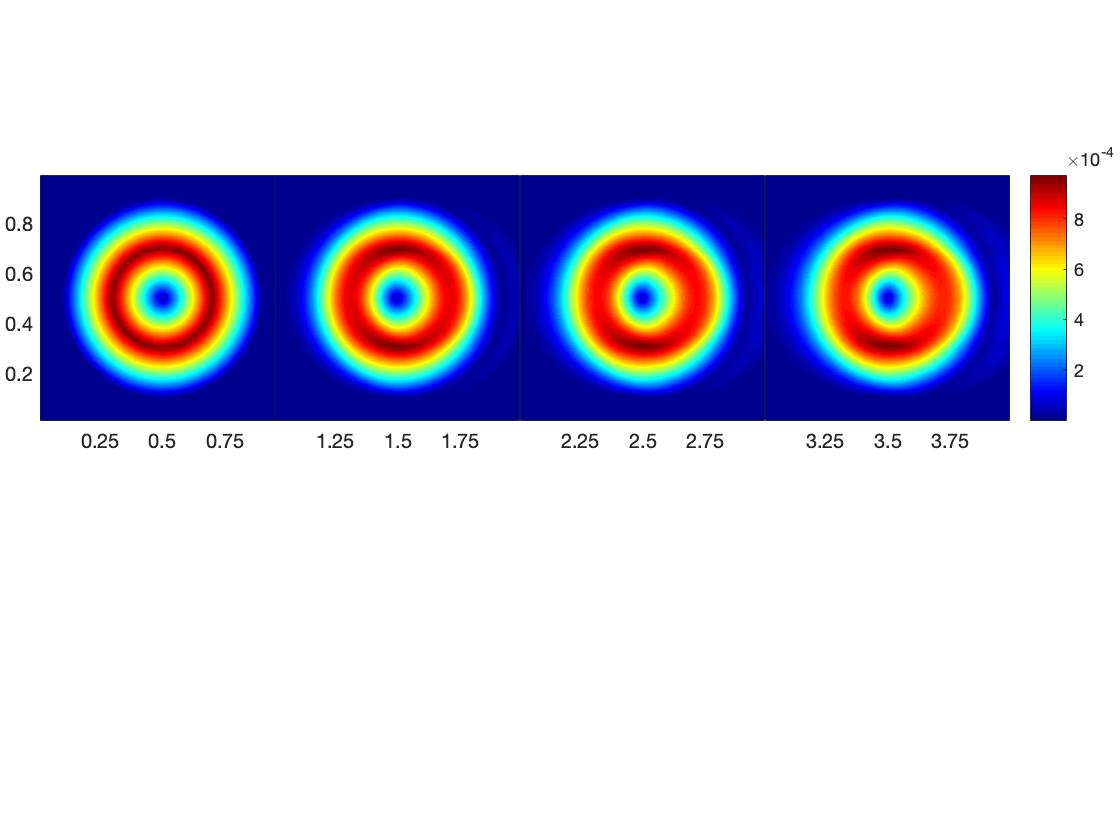}
  \caption{Pseudo-color plots of the Mach numbers for the vortex problem.
    Top: $\veps = 1$, second: $\veps = 10^{-1}$, third: $\veps =
    10^{-2}$, bottom: $\veps = 10^{-3}$, for times $t$ = 0, 1, 2 and 3.}
  \label{fig:mach}
\end{figure}
\begin{figure}[htbp]
\centering
\includegraphics[height=0.28\textheight]{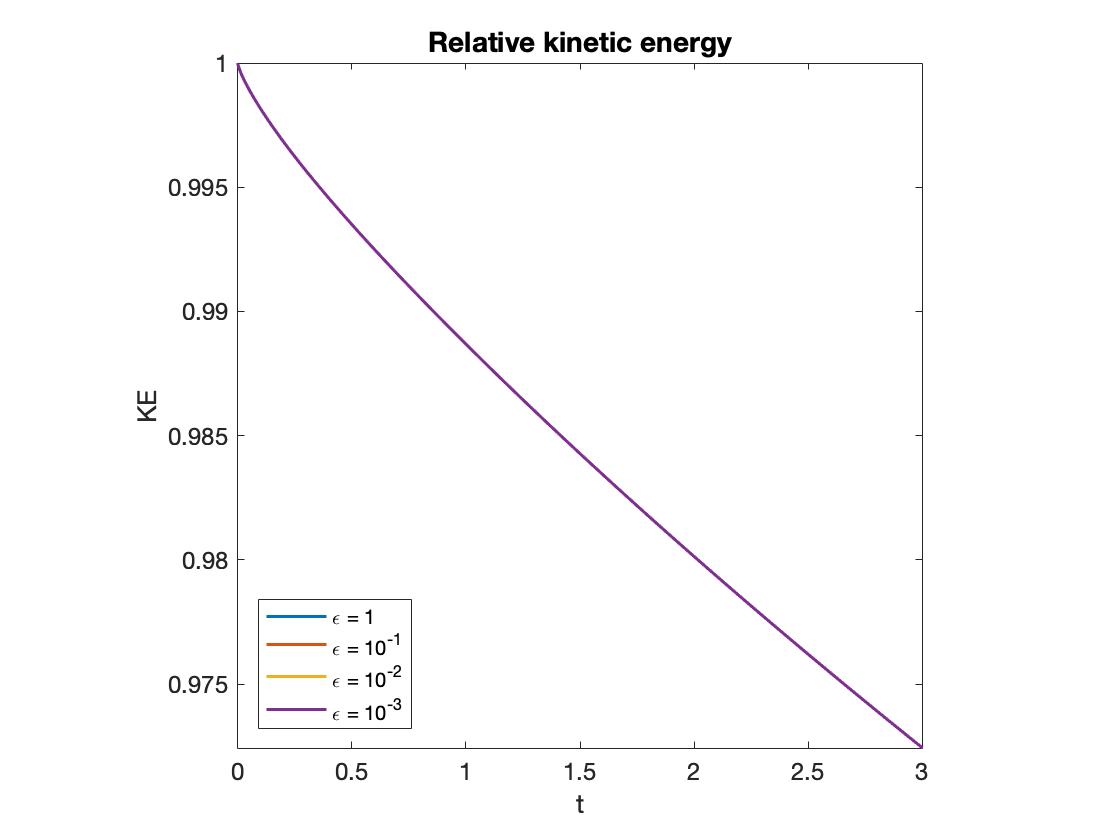}
\caption{Relative kinetic energy from $t = 0$ to $t = 3.0$ for
  different values of $\veps$.
}
\label{fig:ke_vort}
\end{figure}
\subsection{Asymptotic Consistency}
\label{sec:Problem-3}
This test problem is to demonstrate the AP property. Let us consider
the following well-prepared initial data similar to that in
\cite{degond_tang}. 
\begin{equation}
  \begin{aligned}
    \rho(0, x_1, x_2) &= 1 + \veps^2 \sin^2 (2 \pi (x_1 + x_2)), \\     
    u(0, x_1, x_2) &= \sin(2\pi(x_1- x_2)) + \veps \sin(2\pi(x_1 + x_2)), \\
    u(0, x_1, x_2) &=\sin(2\pi(x_1 - x_2)) + \veps \cos(2\pi(x_1+ x_2)).
  \end{aligned}
\end{equation}
We set a very small value of $\veps$, namely $\veps = 10^{-4}$. The
computational domain $[0,1] \times [0, 1]$ is divided into an under
resolved mesh of $40 \times 40$ cells. The boundaries are all taken to
be periodic, and the CFL is 0.45. The linearisation parameters are
$(\bu_1, \bu_2) = (1, 1)$ , $\br = 1$ and $\ba =1$. The final time is
$T = 3$.

In Figure~\ref{fig:grad_div_AP} we plot the density and the divergence
of the velocity at time $T = 0$. Note that the density perturbation is
$O(\veps^2)$ and the divergence perturbation is $O(\veps)$
initially. The corresponding plots obtained using the numerical
solution at time $T = 3$, clearly show that the density is almost
constant, and the velocity divergence is zero. Hence, we culminate that
the numerical solution approximates the incompressible solution
$\rho_{(0)} = \const$, and $\dvg u_{(0)} = 0$, demonstrating the AP
property of the scheme. Further, we show in
Figure~\ref{fig:grad_div_l2_AP} the transient behaviour of
$\norm{\nabla \rho}_{L_2}$ and $\norm{ \dvg u}_{L_2}$ verses time,
from $t = 0$ to $t = 3$. The figure clearly shows that if the initial
data is close to an incompressible data, then the numerical solution
remains close to $\mcal{E}$ for all times.  
\begin{figure}[htbp]
\centering
\includegraphics[height=0.28\textheight]{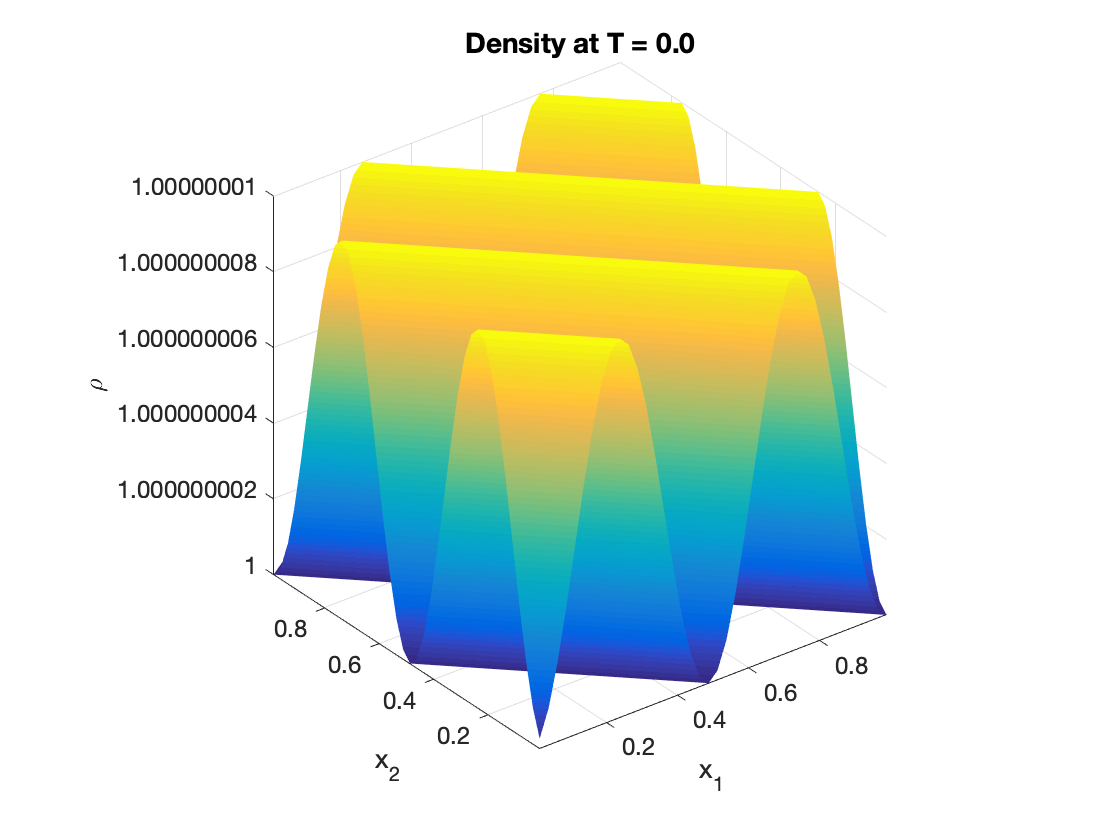}
\includegraphics[height=0.28\textheight]{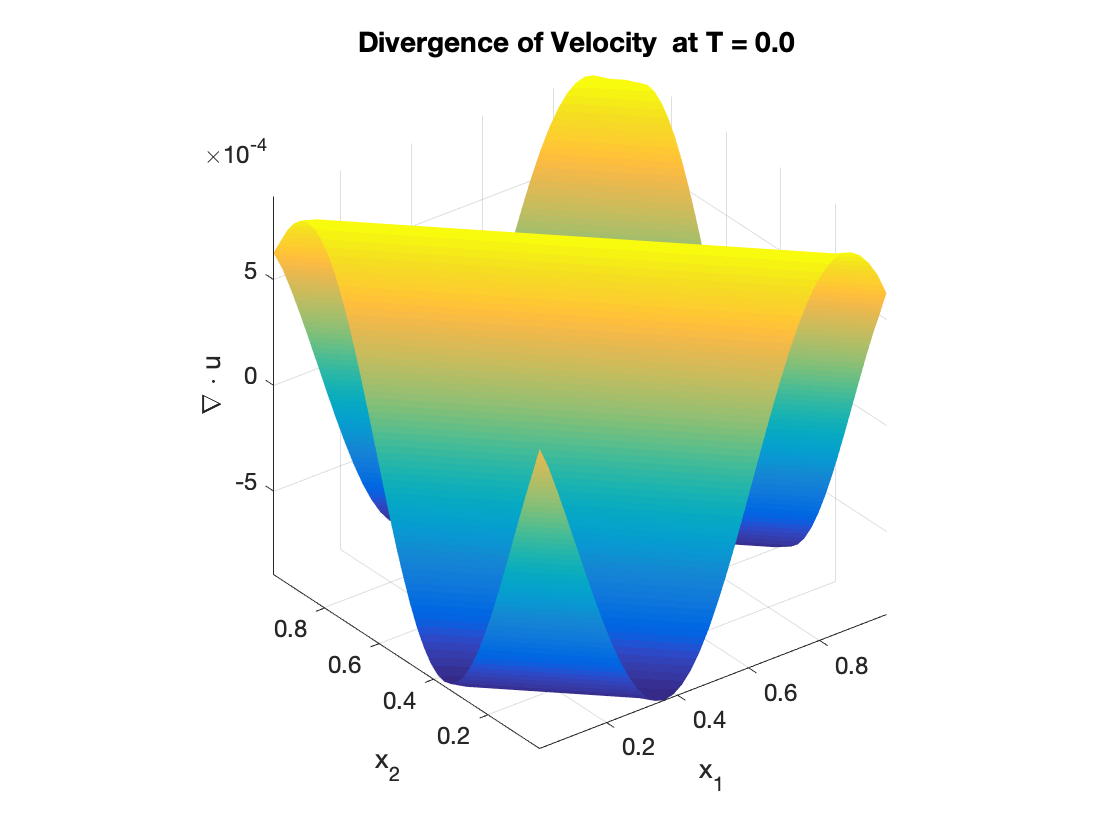}
\includegraphics[height=0.28\textheight]{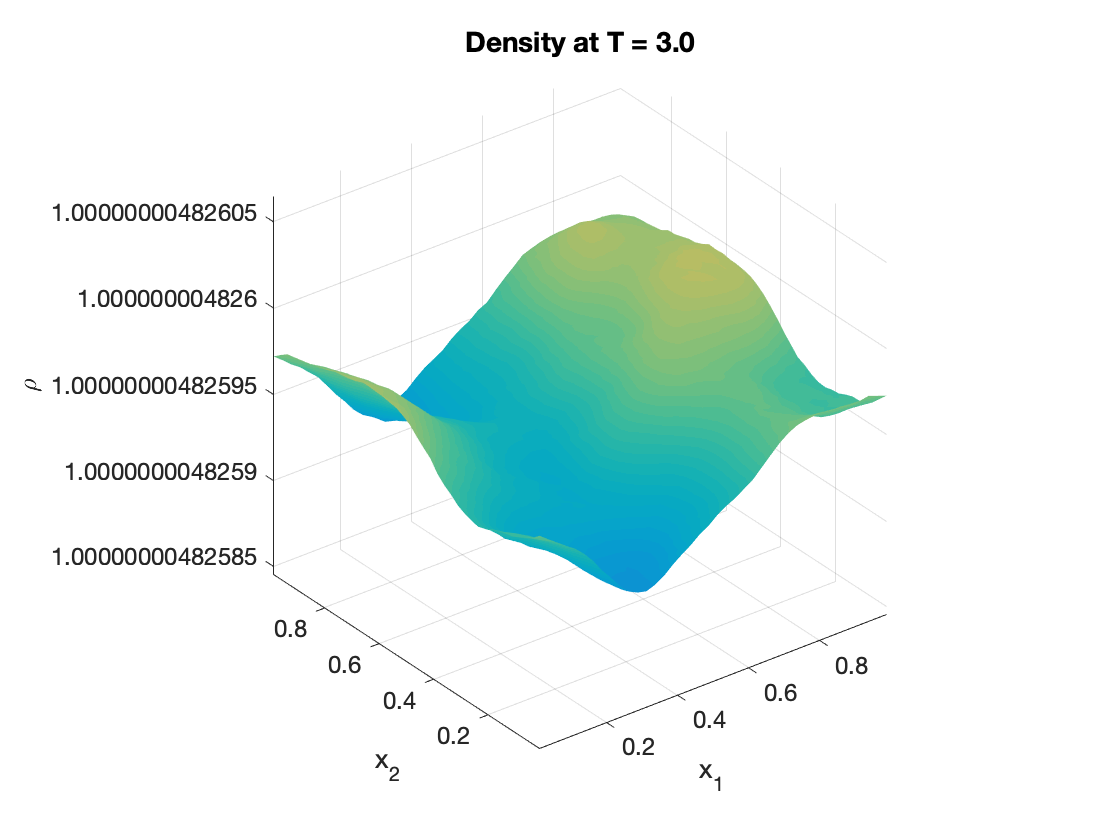}
\includegraphics[height=0.28\textheight]{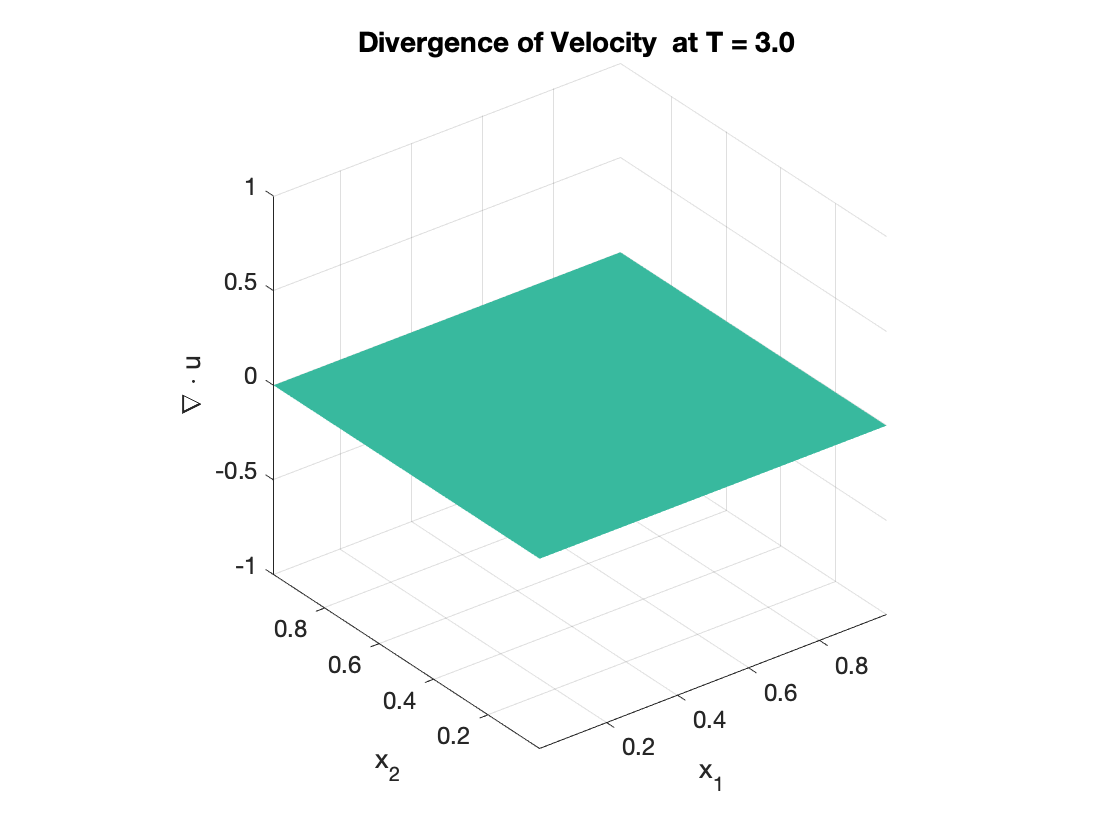}
\caption{Top Left: $\rho$ at $T = 0.0$ . Top Right: $\dvg u$
  at $T = 0.0$. Bottom Left: $\rho$ at $T = 3.0$. Bottom right: $\dvg u$
  at $T = 3.0$. Here, $\veps = 10^{-4}$.}
\label{fig:grad_div_AP}
\end{figure}
\begin{figure}[htbp]
\centering
\includegraphics[height=0.28\textheight]{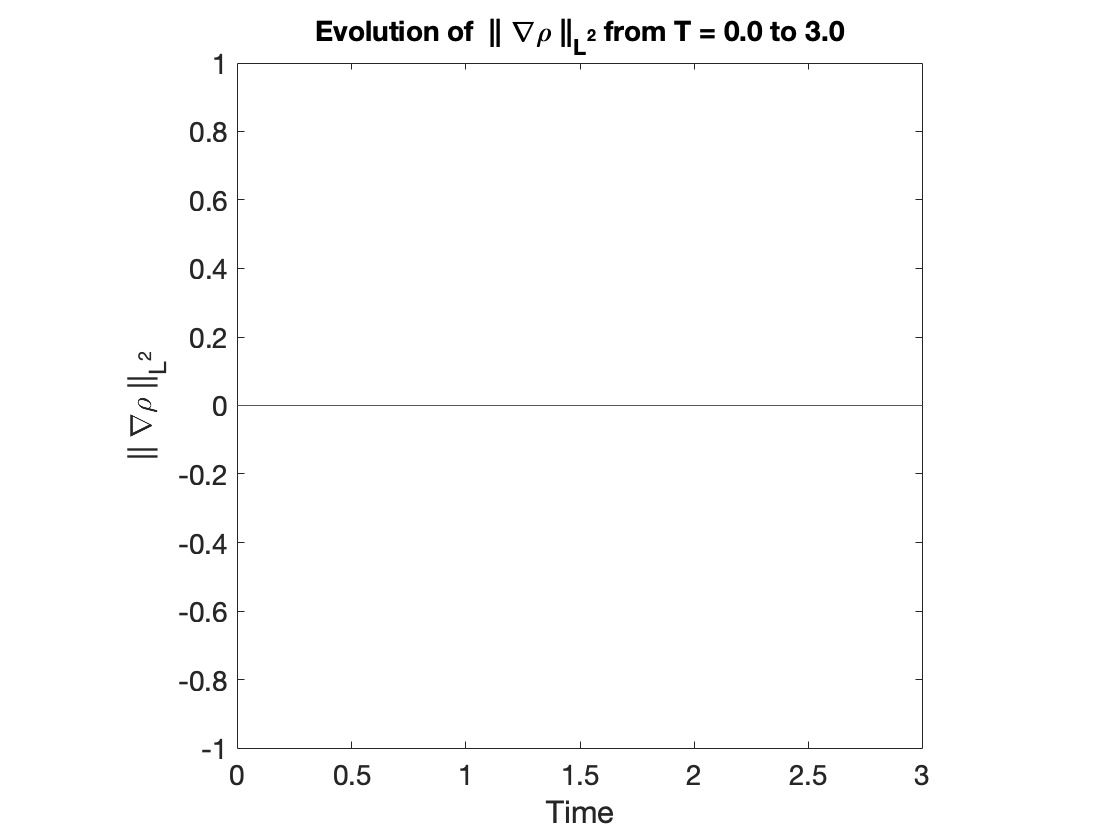}
\includegraphics[height=0.28\textheight]{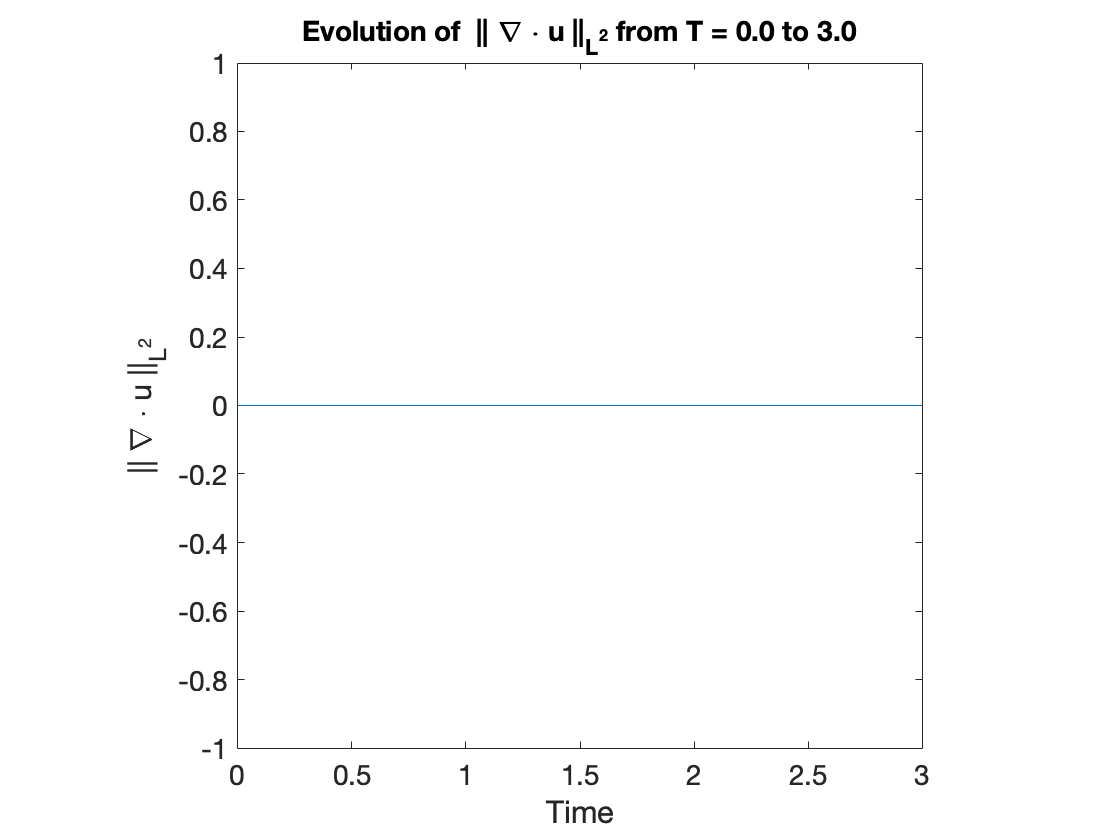}
\caption{Left: $\norm{\nabla \rho}_{L^2}$ from time $T = 0$ to
3. Right: $\norm{ \dvg u}_{L^2}$ from time $T = 0$ to
3. Here, $\veps = 10^{-4}$.}
\label{fig:grad_div_l2_AP}
\end{figure}

\subsection{Asymptotic Order of Convergence and $\mcal{E}$-invariance}
\label{sec:problem-4}
The aim of this experiment is to numerically validate the second order
asymptotic convergence of the numerical solution to the incompressible
limit solution. To this end, we consider the following exact solution of the
incompressible system \eqref{eq:LWESWA_LM} considered in
\cite{schneider}, in which $\vrho_{(0)}(t, x_1, x_2) = 1$, and
\begin{equation}
  \label{eq:schneider}
  \begin{aligned}
    u_{1,(0)}(t, x_1, x_2)&=1-2\cos(2\pi(x_1 - t))\sin(2\pi(x_2-t)), \\
    u_{2,(0)}(t, x_1, x_2)&=1+2\sin(2\pi(x_1 - t))\cos(2\pi(x_2-t)), \\
    \vrho_{(1)}(t, x_1, x_2) &=-\cos(4\pi(x_1-t))-\cos(4\pi(x_2-t)).     
  \end{aligned}
\end{equation}
The linearisation parameters are $\br=1, \ba=1$ and $(\bu_1,
\bu_2)=(1,1)$. The computational domain $\Omega = [0,1] \times [0, 1]$
is successively divided into  $20 \times 20 $, $40 \times 40 $, $80
\times 80$, and $160 \times 160$ mesh cells and the CFL number used is
0.45. The initial data used is obtained by setting
$\vrho(0,x_1,x_2)=\vrho_{(0)}(0,x_1,x_2),
u_1(0,x_1,x_2)=u_{1,(0)}(0,x_1,x_2)$ and
$u_2(0,x_1,x_2)=u_{2,(0)}(0,x_1,x_2)$. 
The boundaries are assumed to be periodic in nature and the final time
for computation is $T=3.0$.   

First, the plot of the $l^1$-norm of the gradient of the density and the
divergence of the velocity at the initial time, and at final time $T = 3$ is
given in Figure~\ref{fig:grad_div}, with an under-resolved $40 \times
40$ mesh for $\veps = 10^{-4}$. The Figure clearly shows that the
IMEX-RK scheme leaves the well-prepared space invariant as the density
is a constant, and the velocity is divergence-free. 

As defined in \cite{ArunSamArxiv}, the EOC computed using
incompressible data $(\vrho_{(0)},u_{1,(0)},u_{2,(0)})$, as the
reference solution is termed as the asymptotic order of convergence
(AOC).The numerical results obtained show that the density $\vrho$ remains
constant exactly at $1$, and hence both the velocity components are
used to measure the AOC. We compute the AOC  for  very small values of
$\veps$, namely for $\veps = 10^{-3}$ and $\veps=10^{-4}$. The AOC
obtained in both the $L^1$ and $L^2$ norms are presented in
Tables~\ref{tab:aocepsm3} and \ref{tab:aocepsm4}. From the tables it 
can easily be seen that as $\veps \to 0$ the numerical solution converges
to the incompressible solution with second order accuracy. This
observation reiterate also the fact that the chosen variant
ARS(2,2,2) is stiffly accurate; see also \cite{AscherRuuthSpiteri}. 

In Figure~\ref{fig:grad_div_l2} we plot the $L^2$ norms of the
gradient of the density and divergence of the velocity over the entire
computational time range $T \in [0,3]$. It clearly shows that the
numerical solution stays in $\mcal{E}$ for all times. Hence, it
substantiate the $\mcal{E}$-invariance with respect to time 

\begin{table}[htbp]
  \centering
  \begin{tabular}[htbp]{|c|c|c|c|c|c|c|c|c|c|}
    \hline
    $N$ & $L^1$ error in $u_1$ & AOC &$L^2$ error in $u_1$& AOC & $L^1$error in $u_2$ & AOC &$L^2$ error in $u_2$ &AOC \\  
    \hline

    20 &  2.670e-01 &	&3.034e-01 &  &2.670e-01&		& 3.034e-01 & \\
      \hline
    40 & 6.931e-02& 1.9461 & 7.749e-02 &1.9692  & 6.931e-02 &1.9461 & 7.749e-02& 1.9692\\
    \hline
    80  & 1.734e-02 &1.9984& 1.930e-02& 2.0054& 1.734e-02 &1.9984 & 1.930e-02&2.0054  \\
    \hline
160& 4.332e-03& 2.0015& 4.814e-03& 2.0034& 4.332e-03 & 2.0015 & 4.814e-03 &2.0034
\\
    \hline
  \end{tabular}
  \caption{ $L^1$, $L^2$ errors in $u_1$, $u_2$, and AOC for
    Problem~\ref{sec:problem-4} corresponding to $\veps=10^{-3}$.}
  \label{tab:aocepsm3}
\end{table}
\begin{table}[htbp]
  \centering
  \begin{tabular}[htbp]{|c|c|c|c|c|c|c|c|c|c|}
    \hline
    $N$ & $L^1$ error in $u_1$ & AOC &$L^2$ error in $u_1$& AOC & $L^1$error in $u_2$ & AOC &$L^2$ error in $u_2$ &AOC \\  
    \hline
20 & 2.670e-01 &	&3.034e-01 & & 2.670e-01 & & 3.034e-01 & \\
     \hline
40 & 6.931e-02 & 1.9461 & 7.749e-02 & 1.9692& 6.931e-02 & 1.9461 & 7.749e-02 &1.9692 \\
    \hline
80 & 1.734e-02 & 1.9984& 1.930e-02 & 2.0054 & 1.734e-02 & 1.9984 & 1.930e-02& 2.0054\\
    \hline
160 & 4.332e-03 & 2.0015& 4.814e-03 & 2.0034 & 4.332e-03 &2.0015 & 4.814e-03 &2.0034\\
    \hline
  \end{tabular}
  \caption{ $L^1$, $L^2$ errors in $u_1$, $u_2$, and AOC for
    Problem~\ref{sec:problem-4} corresponding to $\veps=10^{-4}$.}
  \label{tab:aocepsm4}
\end{table}
\begin{figure}[htbp]
\centering
\includegraphics[height=0.28\textheight]{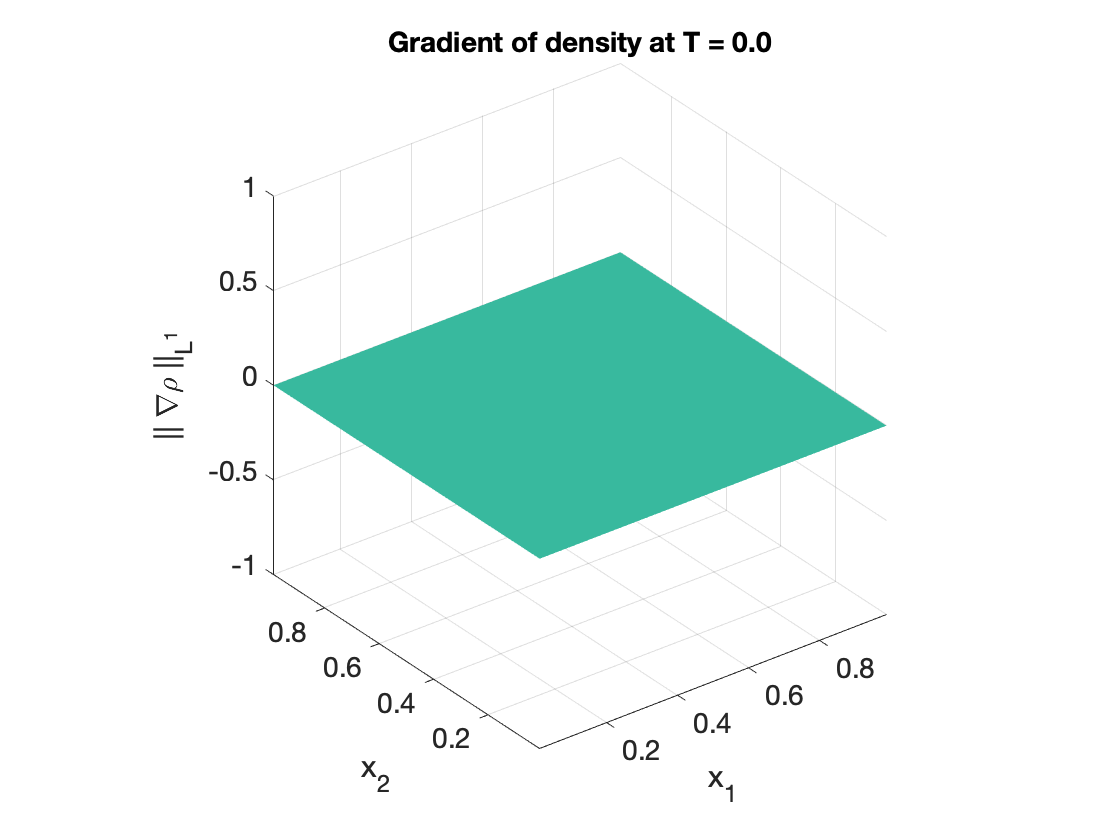}
\includegraphics[height=0.28\textheight]{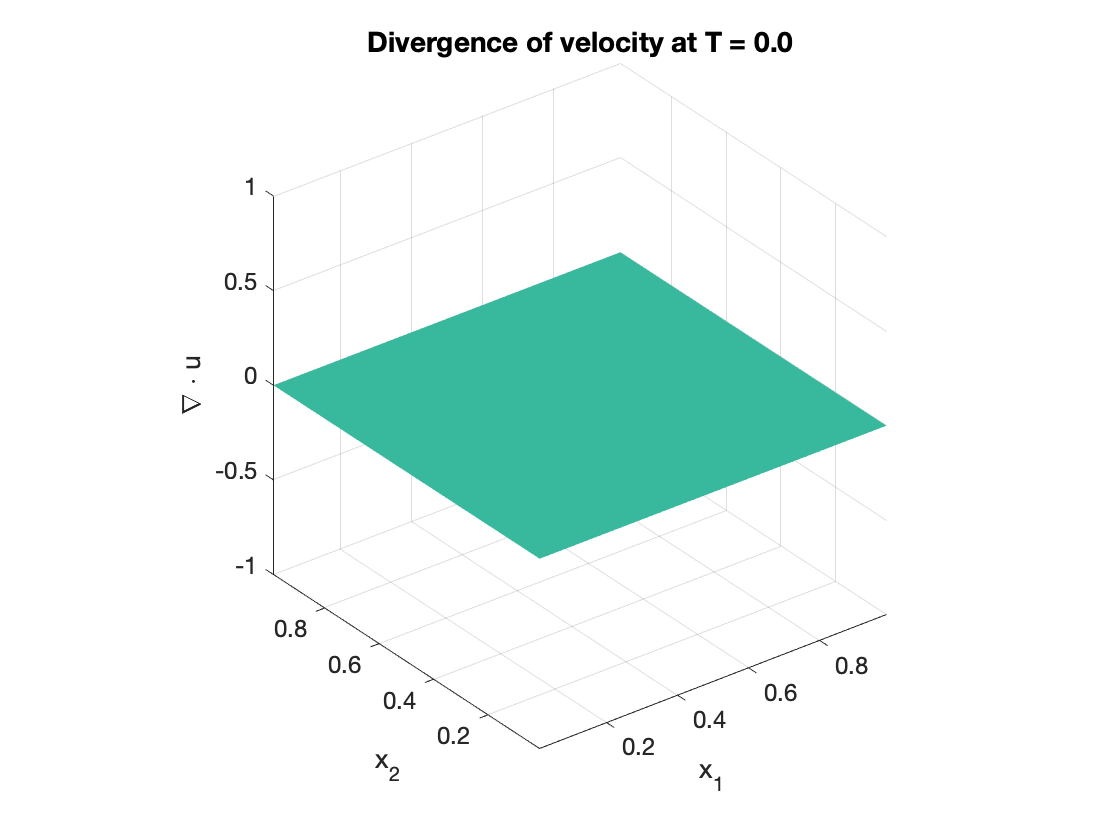}
\includegraphics[height=0.28\textheight]{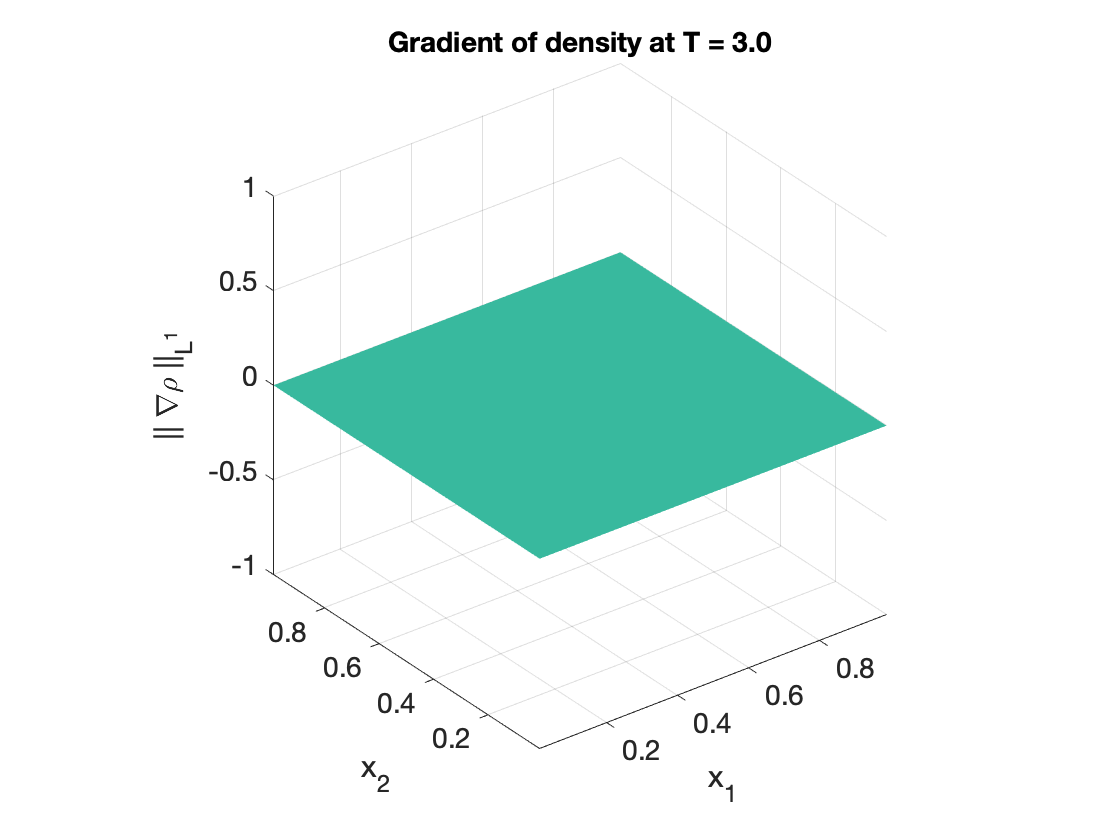}
\includegraphics[height=0.28\textheight]{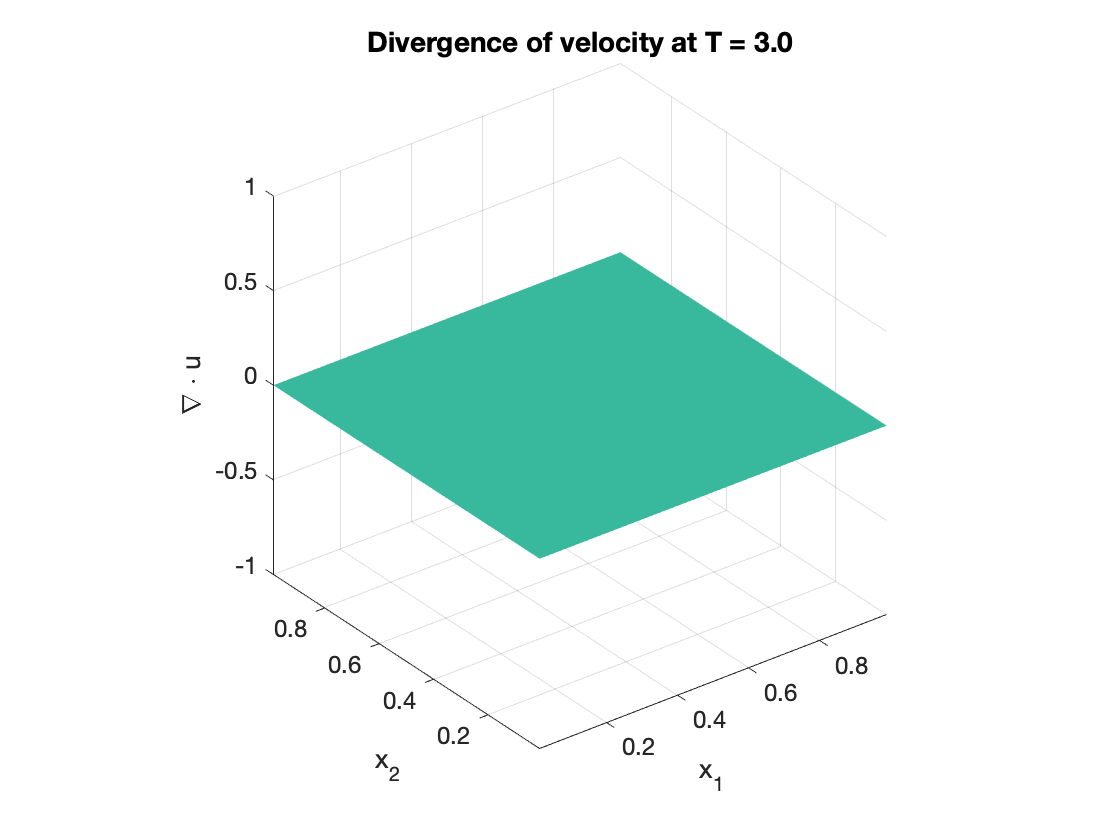}
\caption{Top Left: $\norm{\nabla\rho}_{l^1}$ at $T = 0$.  Top Right:
  $\dvg u$ at $T = 0$. Bottom Left: $\norm{\nabla\rho}_{l^1}$ at $T =
  3$. Bottom right: $\dvg u$ at $T = 3$. Here, $\veps = 10^{-4}$.}
\label{fig:grad_div}
\end{figure}
\begin{figure}[htbp]
\centering
\includegraphics[height=0.28\textheight]{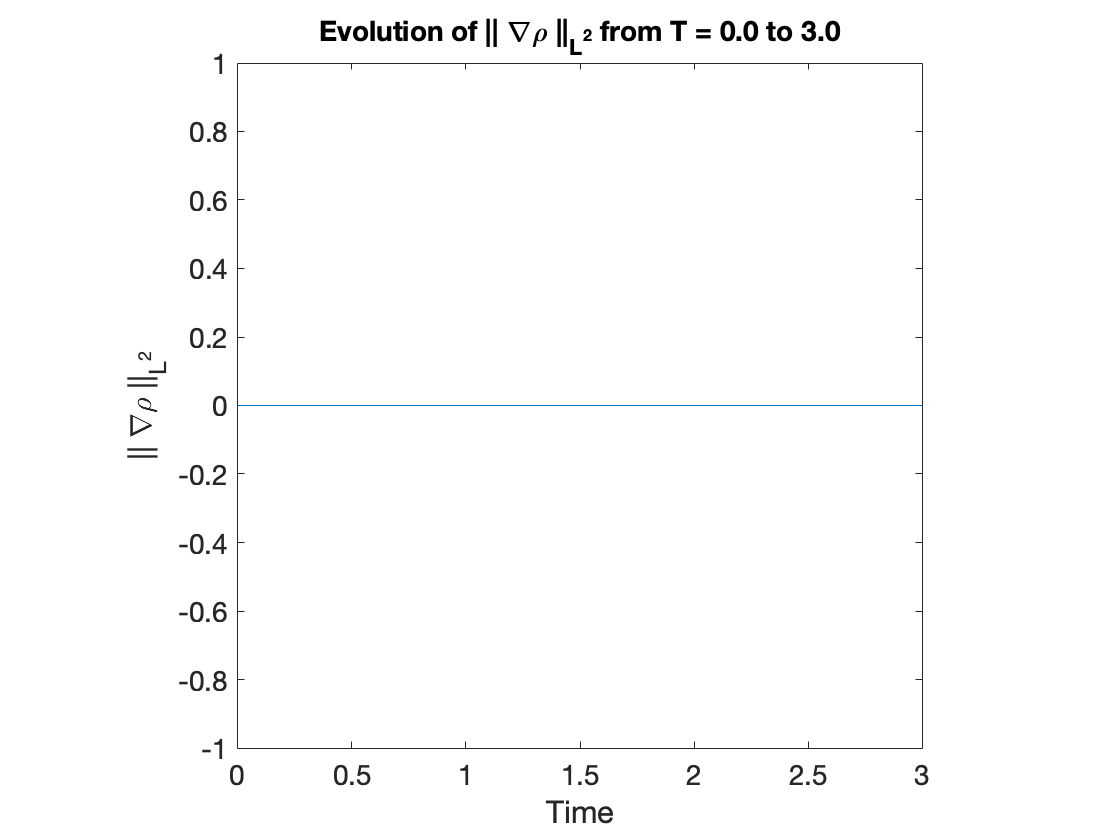}
\includegraphics[height=0.28\textheight]{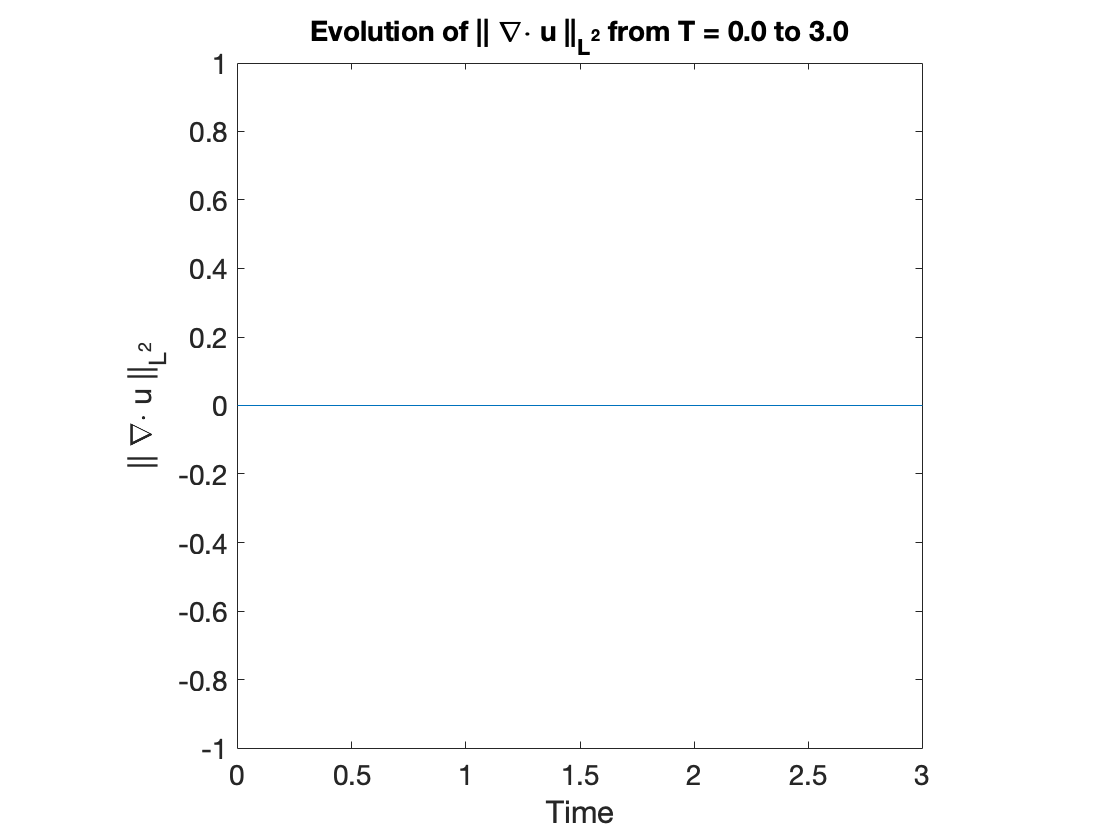}
\caption{Left: $\norm{\nabla \rho}_{L^2}$ from time $T = 0$ to
  3. Right: $\norm{\nabla \cdot u}_{L^2}$ from time $T = 0$ to
  3. Here, $\veps = 10^{-4}$.}
\label{fig:grad_div_l2}
\end{figure}
\section{Concluding Remarks}
\label{sec:Conclusion}
In this paper, we have presented a detailed analysis of an IMEX-RK
finite volume scheme for the linear wave equation system in the zero
Mach number regime. The existence of a unique numerical solution, its
uniform stability with respect to $\veps$, the AP property, and the 
asymptotic accuracy are shown for the time semi-discrete scheme using
saddle point theory of variational problems. Results from the theory
of circulant matrices are used to establish the same features for the
space-time fully-discrete scheme, obtained via a finite volume
discretisation. Extensive numerical studies are carried out to test
the various theoretical concepts discussed. Uniform second-order
convergence is achieved with respect to $\veps$, experimentally. The
dissipation of the scheme is shown to be independent of
$\veps$. The experiments reveal that the scheme is AP, and also
achieves second-order asymptotic convergence, leaving the
well-prepared space $\mcal{E}$ invariant. Hence, the numerical case
studies validate all the theoretical findings.  
\bibliography{references}
\bibliographystyle{abbrv}
\end{document}